\DeclareMathOperator{\lie}{lie}
\DeclareMathOperator{\qobs}{q-obs}
\DeclareMathOperator{\Aut}{Aut}
\DeclareMathOperator{\Vol}{Vol}
\DeclareMathOperator{\energy}{energy}
\DeclareMathOperator{\hocolim}{hocolim}
\DeclareMathOperator{\image}{image}
\DeclareMathOperator{\D}{\Delta}
\newtheorem {theorem} {Theorem} [section]
\newtheorem{lemma}[theorem] {Lemma}
\newtheorem {definition} [theorem] {Definition}
\newtheorem {corollary}[theorem]  {Corollary} 
\newtheorem {example} [theorem]  {Example}
\newtheorem {notation}[theorem] {Notation}
\newtheorem {terminology}[theorem]{Terminology}
\newtheorem {remark} [theorem] {Remark}
\numberwithin {equation} {section}
\DeclareMathOperator{\obj}{obj}
\DeclareMathOperator{\colim}{colim} 
\begin{document} 
\href{http://yashamon.github.io/web2/papers/fukayaII.pdf}{Direct link to author's version}
\author {Yasha Savelyev} 
\today
\address{University of Colima, CUICBAS, Bernal Díaz del
Castillo 340,
Col. Villas San Sebastian,
28045, Colima, Colima,
Mexico}
\email{yasha.savelyev@gmail.com} 
\title [Global Fukaya Category II]{Global Fukaya
category II:  applications}  
\subjclass[2000]{53D37, 55U35, 53C21}
\keywords {Fukaya category, the group of
Hamiltonian symplectomorphisms, infinity
categories, singular connections, curvature bounds}  
\begin{abstract} 
To paraphrase, part I constructs a bundle of $A _{\infty}$
categories given the input of a Hamiltonian fibration over
a smooth manifold. Here we show that this bundle is
generally non-trivial by a sample
computation. One principal
application is differential geometric, and the other is
about algebraic $K$-theory of the integers and the rationals.  We find  new curvature
constraint phenomena for smooth and singular
$\mathcal{G}$-connections on principal $\mathcal{G}$-bundles over $S
^{4}$, where  $\mathcal{G}$ is  $\operatorname {PU} (2)$ or
$\operatorname {Ham} (S ^{2} )$.  Even for the classical
group $\operatorname {PU} (2)$ these phenomena are
inaccessible to known techniques like the Yang-Mills
theory.  
The above mentioned computation is the geometric component used to show that
the categorified algebraic $K$-theory of
the integers and the rationals, defined in ~\cite{cite_SavelyevAlgKtheory} 
following To\"en, admits a $\mathbb{Z} $ injection in degree
$4$. This gives a path from Floer theory to
number theory.
\end{abstract}
\maketitle
\section{Introduction}
Given a smooth Hamiltonian fibration $M \hookrightarrow
P \to X$, with $M$ monotone, and $X$ any smooth manifold, 
in Part I, that is in
\cite{cite_SavelyevGlobalFukayaCategoryI},
we have constructed a functor which may be understood as
giving a fibration of $A _{\infty }$ categories over $X$, with
fiber the Fukaya category of $M$. This construction is
to the Fukaya category as vector bundles are to vector
spaces. It will not be necessary to first read Part I to
follow this paper, as we will outline most concepts. 

More specifically, denote by $\Delta^{} (X)$ the smooth simplex
category of $X$,  
whose objects are smooth maps $\Sigma: \Delta^{d} \to X $.
See Section \ref{sec_preliminaries} for more details. Denote
by $w{A} _{\infty}Cat ^{\mathbb{Z} _{2}} _{k}$ the category
of strictly unital $A _{\infty}$ categories over
a commutative ring $k$, with $\mathbb{Z} _{2}$ graded hom
complexes, with morphisms quasi-isomorphisms. 

Then on the universal level, in Part I we constructed a functor:
\begin{equation} \label{equation_F}
F _{k}: \Delta^{} (B \operatorname {Ham} (M,  \omega)) \to
w{A} _{\infty}Cat ^{\mathbb{Z}
_{2}} _{k}, 
\end{equation}
having some additional structure, which facilitates some 
``homotopy theory''.
Here we show, for $k=\mathbb{Z}, \mathbb{Q} $, that this functor is
(generally) ``homotopically'' non-trivial, and give some applications
for this fact. Homotopic non-triviality is more specifically
a condition on the concordance type of $F _{k}$, see Definition
\ref{def_concordancefunctor}.


As explained in Part I, the above homotopy non-triviality may be understood to say that there is a non-trivial 
``homotopy coherent'' action of $\operatorname {Ham}
(M,  \omega) $ on $\operatorname {Fuk} (M,  \omega) $.  
Existence of such an action in a somewhat weaker form (on
the level $E _{2}$ algebras rather than space level) has
been independently conjectured by Teleman, ICM 2014,
post-factum of appearance of Part I. 

\subsection{Method of Cartesian and Kan fibrations} \label{sec_Method of Cartesian fibration}
Given our Hamiltonian bundle $M \hookrightarrow P \to X$, 
analogously to \eqref{equation_F}, we get a functor:
\begin{equation*}
F _{P, k}: \Delta^{} (X) \to   w{A} _{\infty}Cat ^{\mathbb{Z}
_{2}} _{k}.
\end{equation*}

Let $ \operatorname {NFuk} (M,
\omega)$, denote the Faonte-Lurie $A _{\infty}
$ nerve of $\operatorname {Fuk} (M, \omega),$ (see Appendix
A.4 of Part I.)
Denote by $X _{\bullet }$ the smooth singular set, that is
the simplicial set with $n$-simplices smooth maps
$\Delta^{n} \to X $. 
As we will recall in Section \ref{section.application} $F
_{P,\mathbb{Q}}$ induces a simplicial fibration:
\begin{equation*}
\operatorname {NFuk} (M,  \omega)
\hookrightarrow {\operatorname {Fuk} _{\infty} (P)}
\xrightarrow{p _{\bullet }} X _{\bullet}.
\end{equation*}
This simplicial fibration is furthermore a Cartesian
fibration (in this case equivalently categorical fibration).

We show that for $P$ a non-trivial Hamiltonian $S ^{2} $ fibration over $S ^{4} $, the maximal Kan sub-fibration of $\operatorname {Fuk} _{\infty} (P) $ is non-trivial.
In particular, $\operatorname {Fuk} _{\infty} (P) $ is
non-trivial as a Cartesian fibration. The latter
readily implies:  
\begin{theorem} \label{thm:main} For $M=S ^{2}$ the functors
$F _{\mathbb{Z}}, F _{\mathbb{Q} }$ are not null-concordant.
Furthermore, $F _{P,\mathbb{Q} }$ is
not concordant to $F _{P', \mathbb{Q} }$ if $P \not\simeq P'$
are Hamiltonian $S ^{2}$ bundles over $S ^{4}$.
\end{theorem}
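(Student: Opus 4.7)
The plan is to follow the outline in Section \ref{sec_Method of Cartesian fibration} and reduce everything to the single geometric assertion that for every non-trivial Hamiltonian $S^2$ bundle $P \to S^4$ the maximal Kan sub-fibration of $\operatorname{Fuk}_\infty(P) \to S^4_\bullet$ is non-trivial as a Kan fibration, and moreover that its classifying invariant separates isomorphism classes in $\pi_4(B\operatorname{Ham}(S^2,\omega))$. Granted this, a concordance between $F_{P,\mathbb{Q}}$ and $F_{P',\mathbb{Q}}$ would, via unstraightening, yield an equivalence of the associated Cartesian fibrations, and hence of their maximal Kan sub-fibrations, forcing $P \simeq P'$. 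A hypothetical null-concordance of the universal $F_\mathbb{Z}$ or $F_\mathbb{Q}$ may then be pulled back along the classifying map $S^4 \to B\operatorname{Ham}(S^2,\omega)$ of any non-trivial $P$, again contradicting the non-triviality of the Kan sub-fibration of $\operatorname{Fuk}_\infty(P)$.

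To produce the required invariant, I would use that Kan fibrations over $S^4_\bullet$ with fiber $\operatorname{NFuk}(S^2,\omega)$ are classified by maps into $B\operatorname{Aut}(\operatorname{NFuk}(S^2,\omega))$, so the problem reduces to constructing a $\mathbb{Z}$-valued homomorphism on $\pi_4$ of this classifying space and evaluating it geometrically on the Kan sub-fibration attached to $P$. The equator Lagrangian $L \subset S^2$ is a split-generator of the Fukaya category, and the $A_\infty$ data of Part I assign to a singular $4$-simplex $\Sigma\colon \Delta^4 \to S^4$ a family of endomorphisms of the Floer complex of $L$. After dimension counting, integrating these endomorphisms over the simplex produces a quantum Maslov-type class along the lines of \cite{cite_SavelyevAlgKtheory}, which can be identified with a signed count of pseudo-holomorphic sections $u$ of $P|_{D^4}$ with boundary on the family of equator Lagrangians over $\partial D^4 \subset S^4$, of prescribed Maslov index.

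The main obstacle is the explicit non-vanishing evaluation of this invariant on a generator of $\pi_4(B\operatorname{Ham}(S^2,\omega)) \cong \pi_3(\operatorname{PU}(2)) \cong \mathbb{Z}$. This calls for transversality and compactness of the section moduli, where monotonicity of $S^2$ and of its equator (with minimal Maslov number $2$) excludes sphere and disc bubbling in codimension $\leq 1$, together with an identification of the resulting count with a topological degree via an index-bundle argument over the classifying $S^4$. Once the value is shown to be a non-zero integer multiple of the primary characteristic number of $P$, the invariant is $\mathbb{Z}$-linear in $[P]$ and separates all isomorphism classes of Hamiltonian $S^2$-bundles over $S^4$, completing the plan.
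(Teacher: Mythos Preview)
Your high-level architecture matches the paper: reduce to the non-triviality of the Kan sub-fibration $K(P)\to S^4_\bullet$ and then detect that non-triviality by a holomorphic-section count identified with a quantum Maslov class. But two points deserve comment.

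\textbf{The second assertion.} You propose to show directly that the invariant you build is $\mathbb{Z}$-linear in $[P]\in\pi_4(B\operatorname{Ham}(S^2,\omega))$ and hence separates isomorphism classes. The paper bypasses this entirely by a connect-sum trick: if $F_{P_g,\mathbb{Q}}$ were concordant to $F_{P_{g'},\mathbb{Q}}$ with $g\neq g'$, then since $\operatorname{Fuk}_\infty$ is additive under bundle connect sum, one deduces that $\operatorname{Fuk}_\infty(P_{g^{-1}g'})$ is null-concordant, while $P_{g^{-1}g'}$ is non-trivial, contradicting the first assertion. This reduces part two to part one and avoids having to prove linearity or to pin down the invariant up to a unit; your route would work in principle but requires more.

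\textbf{The first assertion.} Your outline of the non-vanishing is where the real content lives, and here the paper is considerably more specific than what you sketch. The obstruction is not packaged abstractly through $B\operatorname{Aut}(\operatorname{NFuk}(S^2))$; rather, one constructs \emph{unexcited} perturbation data for which $\mu^d_{\Sigma_+}(\gamma,\ldots,\gamma)=0$ for $2<d\le 4$ (an energy/Maslov argument rules out the relevant moduli spaces over the ``trivial'' hemisphere). This produces an explicit partial section over $D_+$, and the obstruction to extending it over $D_-$ is exactly the Floer homology class of $\mu^4_{\Sigma_-}(\gamma_1,\ldots,\gamma_4)$. That class is then moved, through a chain of $A_0$-admissible concordances of Hamiltonian structures (capping, redistributing curvature, retracting onto $R_-$), onto the quantum Maslov class $\Psi(a)$ of the clutching loop, whose non-vanishing is quoted from \cite{cite_SavelyevQuantumMaslov} (not \cite{cite_SavelyevAlgKtheory}). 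Your ``index-bundle argument over $S^4$'' is not what is actually used, and without the unexcited-data step it is not clear how you would isolate a single well-defined obstruction class to compute. So the plan is sound, but the mechanism by which the count becomes computable is the part you have not yet supplied.
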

The proof of the theorem above makes use of a certain computation
from ~\cite{cite_SavelyevQuantumMaslov}, using the notion of
quantum Maslov classes.  
The following is a partial corollary, the proof of which will
only be outlined here, due to some algebraic dependencies
that are beyond the scope here. 
\begin{theorem} \label{cor_ktheory}
There is a natural injection 
$$\mathbb{Z} \to K ^{Cat, \mathbb{Z} _{2}} _{4} (k),$$ where
$k=\mathbb{Z}, \mathbb{Q} $ and the right hand side is the categorified algebraic
$K$-theory group, introduced in ~\cite{cite_SavelyevAlgKtheory},
following ideas of To\"en
~\cite{cite_ToenThehomotopytheoryofdg-categoriesandderivedMoritatheory}.
\end{theorem}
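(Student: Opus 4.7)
The plan is to interpret Theorem \ref{thm:main} as the injectivity statement for a natural map into the categorified $K$-theory space constructed in \cite{cite_SavelyevAlgKtheory}. By construction there (following To\"en), $K^{Cat, \mathbb{Z}_2}_*(k)$ is realised as homotopy groups of a classifying space $\mathcal{K}^{Cat, \mathbb{Z}_2}(k)$, designed so that a functor $\Delta(X) \to w{A}_{\infty}Cat^{\mathbb{Z}_2}_{k}$ yields, well-definedly on concordance classes, a map $X \to \mathcal{K}^{Cat, \mathbb{Z}_2}(k)$. Applying this to the universal functor $F_k$ of \eqref{equation_F} produces a natural map
\[
\phi_k : B \operatorname{Ham}(S^2, \omega) \to \mathcal{K}^{Cat, \mathbb{Z}_2}(k).
\]

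The next step is to identify the source in degree four. It is classical that $\operatorname{Ham}(S^2, \omega)$ has the homotopy type of $SO(3)$, so
\[
\pi_4\bigl(B \operatorname{Ham}(S^2, \omega)\bigr) \cong \pi_3(SO(3)) \cong \mathbb{Z},
\]
and this group is realised geometrically as the set of isomorphism classes of Hamiltonian $S^2$-bundles over $S^4$ via the classifying map. For such a bundle $P$ with classifying map $\chi_P : S^4 \to B \operatorname{Ham}(S^2, \omega)$, the composition $\phi_k \circ \chi_P$ is, up to homotopy, the classifying map of $F_{P, k}$, so the induced homomorphism on $\pi_4$ sends $[P]$ to the concordance class $[F_{P, k}]$ in $K^{Cat, \mathbb{Z}_2}_4(k)$.

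Theorem \ref{thm:main} says precisely that, for $k = \mathbb{Q}$, distinct $[P]$ produce distinct concordance classes; hence $(\phi_\mathbb{Q})_*: \mathbb{Z} \to K^{Cat, \mathbb{Z}_2}_4(\mathbb{Q})$ is injective. Naturality of the whole construction in $k$ gives a base-change map $K^{Cat, \mathbb{Z}_2}_4(\mathbb{Z}) \to K^{Cat, \mathbb{Z}_2}_4(\mathbb{Q})$ which factors $(\phi_\mathbb{Q})_*$ through $(\phi_\mathbb{Z})_*$, forcing the latter to be injective as well. The resulting map $\mathbb{Z} \hookrightarrow K^{Cat, \mathbb{Z}_2}_4(k)$ is manifestly natural in $k$.

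The main obstacle is the first step: constructing $\phi_k$ rigorously and showing that concordance of the functors $F_{P, k}$ is faithfully detected by homotopy classes in $\mathcal{K}^{Cat, \mathbb{Z}_2}(k)$. This is exactly the algebraic input that the author defers to \cite{cite_SavelyevAlgKtheory}; once granted, the present paper supplies the geometric half, namely the separation of concordance classes, via Theorem \ref{thm:main}.
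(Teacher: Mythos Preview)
Your proposal matches the paper's own argument closely: both are explicitly sketches that defer the construction of the homomorphism $\pi_4(B\operatorname{Ham}(S^2,\omega)) \to K^{Cat,\mathbb{Z}_2}_4(k)$ to \cite{cite_SavelyevAlgKtheory} (the paper cites Corollary~6.2 there), and both feed the geometric content of Theorem~\ref{thm:main} into that homomorphism to obtain injectivity. The paper phrases the injectivity step slightly differently, saying that the proof of Theorem~\ref{thm:main} implicitly computes a characteristic class of $F_{P,\mathbb{Q}}$ which determines $P$ up to isomorphism (to be made explicit in future work), whereas you invoke the statement of Theorem~\ref{thm:main} directly; these are the same input packaged differently. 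Your explicit base-change step to pass from $k=\mathbb{Q}$ to $k=\mathbb{Z}$ is a small addition the paper does not spell out.
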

The groups $K ^{Cat, \mathbb{Z} _{2}} _{n} (\mathbb{Z})$
encode some arithmetic information, (at the moment
mysterious) that should be a strict
refinement of the basic algebraic $K$-theory groups $K _{n}
(\mathbb{Z} )$. The latter groups are only partially computed, but at least we do have some idea of the arithmetic information
contained therein.
It is a theorem of Rognes ~\cite{cite_RognesK4Zistrivial}
that $K _{4} (\mathbb{Z}) =0 $.  As
explained in ~\cite[Example 1]{cite_SavelyevAlgKtheory}, this
together with the result above implies that the natural
trace maps from
$K ^{Cat, \mathbb{Z} _{2}}  (\mathbb{Z}
)$ to $K (\mathbb{Z} ) $ have a kernel. So we get
a connection from Floer theory to algebraic $K$-theory and 
arithmetic. 
\subsection{An application to curvature bounds of
singular and smooth connections}
\subsubsection {A non-metric measure of curvature} \label{sec:non-metric}
The geometry of our calculation will naturally tie in with
some theory of singular connections, which we now outline.
Let $G$ as above be a Frechet Lie group,
we denote by $\lie G$ its Lie algebra. Let
$$\mathfrak n: \lie G   \to \mathbb{R}$$ be an $Ad$ invariant Finsler norm.  For a principal $G$-bundle $P$ over a Riemann surface $(S,j)$, and given a $G$ connection $\mathcal{A}$ on $P$ define a 2-form $\alpha _{\mathcal{A}} $ on $S$ by: 
\begin{equation*}
   \alpha _{\mathcal{A}}  (v,jv) = \mathfrak n (R _{\mathcal{A}} (v, jv)),
\end{equation*}
where $R _{\mathcal{A}} $ is the curvature 2-form of
$\mathcal{A}$. More specifically, the latter form has the
properties: $$R _{\mathcal{A}} (v, w) \in \lie \Aut P_{z},
$$ for $z \in S$, $v,w \in T _{z} S  $, $P _{z}$ the fiber
of $P$ over $z$, $\Aut P _{z} \simeq G $ the group of
$G$-bundle automorphisms of $P _{z} $, and where $\simeq$ means non-canonical group isomorphism.

Define
\begin{equation} \label{eq:areasurface}
\energy _{\mathfrak n}  (\mathcal{A}) := \int _{S} \alpha _{\mathcal{A}}.
\end{equation}
In the case $\mathcal{A}$ is singular with singular set $C$,
$\alpha _{\mathcal{A}} $  is defined on $S -C$, so we 
define 
\begin{equation*}
\energy _{\mathfrak n} (\mathcal{A}) := \int _{S - C} \alpha _{\mathcal{A}},
\end{equation*}
with the right-hand side now being an extended integral.
This $\energy$ is a non-metric measurement meaning that no Riemannian metric on $S$ is needed.

It is possible to extend the functional above to
a functional on the space $\mathcal{C}$ of $G$-connections
on principal $G$ bundles $P \to \Delta ^{n} $. It may seem
that $\Delta ^{n} $ has no connection to Riemann surfaces,
but in fact there is an intriguing such connection.  
Let ${\mathcal {S}} _{d} $ denote the universal curve over
$\overline{\mathcal{R}} _{d} $ - the moduli space of complex
structures on the disk with $d+1$ punctures on the boundary.
And let ${\mathcal {S}} ^{\circ} _{d} $ denote ${\mathcal {S}} _{d} $, with nodal points of the fibers removed. 
Then it is shown in Part I that there are certain axiomatized systems of maps:
\begin{equation*}
{u}: {\mathcal {S}} ^{\circ}  _{d}  \to \Delta
^{n}, \quad \text{with $d,n$ varying}.  
\end{equation*}
Such a system is uniquely determined up to suitable homotopy, and is referred to as $\mathcal{U}$. 

There is then a natural functional:
\begin{align} \label{eq:areaG}
   \energy _{\mathcal{U}}: \mathcal{C} \to \mathbb{R} _{\geq 0},
\end{align}
defined with respect to a choice of $\mathcal{U}$, see
Section \ref{sec:qcurvature}.  When $n=2$ it is just the
$\energy$ functional as previously defined.

\subsubsection{Curvature bounds} \label{sec_Curvature bounds}
The following basic result can be formally understood as a
corollary of Theorem \ref{thm:lowerboundsingular}. But it is
more elementary to see it as a corollary of  
Theorem \ref{prop:alternative}.    
\begin{corollary} [Of Theorem \ref{thm:lowerboundsingular}
  and of Theorem \ref{prop:alternative}] 
   \label{corol:example}  Let $P$ be a non-trivial
   $\operatorname {PU} (2) $ (or $\operatorname {Ham} (S ^{2}) $)  bundle $P
   \to S ^{4}$, let $D ^{4}
   _{\pm}$ be as above and 
let $\mathcal{A} $ be a smooth $PU (2) $ or $\operatorname
{Ham} (S ^{2},  \omega)  $ connection 
on $P$.  For any $\mathcal{U} $: $$\energy  _{\mathcal{U}
   }(\mathcal{A} ) \,|\, _{D ^{4} 
   _{-}} < \frac{1}{2} \implies \energy  _{\mathcal{U} }(\mathcal{A} ) \,|\, _{D ^{4}
   _{+}} \geq \frac{1}{2}.$$
\end{corollary}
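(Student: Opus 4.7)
The natural approach is by contraposition: assume for contradiction that both $\energy_{\mathcal{U}}(\mathcal{A})|_{D^4_-}$ and $\energy_{\mathcal{U}}(\mathcal{A})|_{D^4_+}$ are strictly less than $\frac{1}{2}$, and derive a contradiction from one of the two cited input theorems. The $2$-form $\alpha_{\mathcal{A}}$ underlying the definition \eqref{eq:areasurface} is constructed pointwise from the curvature, and \eqref{eq:areaG} extends this by pulling back along maps ${\mathcal S}_d^\circ \to \Delta^n$; one therefore expects $\energy_{\mathcal{U}}$ to be additive (or at worst subadditive in a controlled way) across the decomposition $S^4 = D^4_- \cup_{S^3} D^4_+$. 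Under this additivity, the hypothesis that both restricted energies are $< \frac{1}{2}$ forces $\energy_{\mathcal{U}}(\mathcal{A}) < 1$ on all of $S^4$, so the goal reduces to ruling this out for any smooth $\mathcal{A}$ on the non-trivial bundle $P$.

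The first route, through Theorem \ref{thm:lowerboundsingular}, is expected to supply exactly the sharp lower bound $\energy_{\mathcal{U}}(\mathcal{A}) \geq 1$ (for singular, and a fortiori smooth, connections on any non-trivial $P$). Intuitively, if the $\mathfrak n$-size of the curvature were globally so small that $\energy_{\mathcal{U}}(\mathcal{A}) < 1$, then the Floer parallel-transport data underlying the Cartesian fibration $\operatorname{NFuk}(S^2,\omega) \hookrightarrow \operatorname{Fuk}_\infty(P) \to X_\bullet$ would be confined near the trivial datum, forcing the maximal Kan sub-fibration to be trivial and contradicting the quantum Maslov class computation of \cite{cite_SavelyevQuantumMaslov} that powers Theorem \ref{thm:main}. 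The threshold $1$ reflects the monotone normalization $\int_{S^2}\omega = 1$, so that the $\frac{1}{2}$ appearing in the corollary is exactly the symmetric distribution of this total area across the two hemispheres.

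The second route, through Theorem \ref{prop:alternative}, is labeled more elementary and presumably recasts the same bound as a symplectic-area inequality for sections of the associated Hamiltonian $S^2$-fibration, then closes by appealing directly to the non-triviality of the clutching class in $\pi_3(\operatorname{PU}(2)) = \pi_3(\operatorname{Ham}(S^2,\omega)) = \mathbb{Z}$; this bypasses the $A_\infty$-categorical machinery but arrives at the same inequality. Either way, once the sharp bound $\energy_{\mathcal{U}}(\mathcal{A}) \geq 1$ on the whole sphere is in hand, the corollary is a one-line extraction via additivity. The main obstacle is therefore entirely upstream: the delicate work of pinning the constant $1$ with its correct sharpness from the monotone normalization and the quantum Maslov computation lives inside the two cited theorems, not in this corollary itself.
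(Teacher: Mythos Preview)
Your proposal rests on two claims that do not match the paper, and together they constitute a genuine gap.

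First, the additivity step is unfounded. Look at the definition \eqref{eq:definitionArea}: $\energy_{\mathcal{U}}(\mathcal{A}) = \sup_{r} \energy_{\mathfrak{n}}(u(m_1,\ldots,m_n,r,n)^*\mathcal{A})$. This is a supremum over the moduli parameter $r$, not an integral over the base. There is no reason for it to split additively across $S^4 = D^4_- \cup D^4_+$; indeed the functional is only defined simplex by simplex, so there is no global $\energy_{\mathcal{U}}(\mathcal{A})$ on $S^4$ to speak of. The quantities $\energy_{\mathcal{U}}(\mathcal{A})|_{D^4_\pm}$ in the corollary are precisely $\energy_{\mathcal{U}}(\mathcal{A}_{\Sigma_\pm})$ for the induced simplicial connection, and these are two separate suprema.

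Second, you have the threshold wrong: Theorem~\ref{thm:lowerboundsingular} gives $\energy_{\mathcal{U}}(\mathcal{A}^{res}_{\Sigma_0}) \geq 1/2$, not $\geq 1$. There is no global bound of $1$ anywhere in the paper.

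The actual argument is far more direct and uses no additivity at all. Theorem~\ref{prop:alternative} already asserts the disjunction
\[
\bigl(\energy_{\mathcal{U}}(\mathcal{A}_{\Sigma_+}) \geq \hbar - 5\kappa\bigr) \ \lor\ \bigl(\energy_{\mathcal{U}}(\mathcal{A}_{\Sigma_-}) \geq \hbar - 5\kappa\bigr)
\]
for any \emph{perfect} simplicial Hamiltonian connection on a non-trivial $P$, where $\hbar = \tfrac{1}{2}\Vol(S^2,\omega) = \tfrac{1}{2}$ and $\kappa$ is the $L^{\pm}$-length of the holonomy of $\mathcal{A}_{\sigma^1_0}$ over the degenerate $1$-simplex at $x_0$. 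A smooth connection $\mathcal{A}$ induces a simplicial connection by pullback (Example~\ref{example:exampleSimplicial}); for this induced connection $\mathcal{A}_{\sigma^1_0}$ is trivial, so $\kappa = 0$ and the perfectness condition is automatic. The disjunction then reads: at least one of $\energy_{\mathcal{U}}(\mathcal{A}_{\Sigma_\pm}) \geq \tfrac{1}{2}$, which is exactly the contrapositive of the corollary. That is the whole proof.
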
  
\begin{remark} \label{rem_}
The proof of even the above corollary 
traverses the entirety of the theory here. As this
is a very elementary result we may hope for a
simpler argument. 
In the case of $PU (2) $, one idea might be to replace Floer theory, used here, by the technically
simpler mathematical Yang-Mills theory over surfaces
~\cite{cite_AtiyahBottTheYang-MillsequationsoverRiemannsurfaces}. If we want to mimic the argument presented in this paper, then we should first
extend Yang-Mills theory to work with $G$-bundles
over surfaces with corners and holonomy constraints over
boundary. This should be possible, but beyond this things
are unclear, since, as we also use certain abstract
algebraic topology to glue various analytic data, and it is not clear how this would work for Yang-Mills theory.  

\end{remark}


\subsubsection {Abstract resolutions of singular
connections} 
We can use the computation
of Theorem \ref{thm:main} to obtain lower bounds for
the curvature of certain types of singular connections.  
\begin{definition} \label{def:basicSingular}
Let ${G} \hookrightarrow P \to X$ be a principal $G$ bundle, where $G$ is a Frechet Lie group. A \textbf{\emph{singular $G$-connection}} on $P$ is a  closed subset $C \subset X$, and a smooth Ehresmann $G$-connection 
 $\mathcal{A}$ on $P| _{X-C} $.
\end{definition}
The above definition is basic, as one often puts additional
conditions, see for instance
\cite{cite_ReeseHarverSingular},
\cite{cite_SingularSibnerSibner}.

Avoiding generality, suppose that
$\mathcal{A}$ is a singular $G$-connection on a
principal $G$-bundle $P \to S ^{n}$, with a single
singularity $x _{0} $. We will show that it is
possible to control the curvature of the singular
connection $\mathcal{A}$ if we impose a certain
structure on the singularity of $\mathcal{A}$.
The simplest way to do this is to ask for
existence of a certain kind of abstract
resolution.  

First, a simplicial $G$-connection $\mathcal{A} $ on $P$, as defined in Section
\ref{section:simplicialconnections}, is 
a functorial assignment $$\Sigma \mapsto \mathcal{A}
_{\Sigma },$$ where $\mathcal{A} _{\Sigma }$ is a smooth
$G$-connection on $\Sigma ^{*} P$, for each smooth $$\Sigma: \Delta ^{n} \to S ^{n}. $$
\begin{definition} \label{def:resolution}
For $\mathcal{A}, P$ as above a
\textbf{\emph{simplicial resolution}} of $\mathcal{A} $ is a simplicial $G$ connection $\mathcal{A} ^{res} $ on $P$, with
the following property. Let $\Sigma _{0}: \Delta ^{n} \to
S ^{n}  $ represent the generator of $\pi _{n} (S ^{n},
x _{0}) $ (cf. Appendix \ref{appendix:Kan}), then  $$(\Sigma
 _{0}   | _{interior \, \Delta ^{n}}) ^{*}  \mathcal{A}
 = inc ^{*} \mathcal{A} ^{res} _{\Sigma _{0} }, $$  
where $inc: {interior \, \Delta ^{n}} \to \Delta^{n} $ is
the inclusion map.
\end{definition}

In the following theorem $G=PU (2), n=4$ and the
norm $\mathfrak n$ on $\lie PU (2)$ will be taken to be
the operator norm, normalized so that the Finsler
length of the shortest one parameter subgroup from
$id$ to $-id$ is $\frac{1}{2}$. We will omit $\mathfrak n$  in notation. We also impose an additional constraint on $\mathcal{A} ^{res} $, 
so that the curvature at ``$\infty$'' is bounded by a threshold, which means the following. Let $$\Sigma _{\infty}: \Delta ^{4} \to x _{0} $$ be the constant map. Suppose that:
\begin{equation*}
   \energy _{\mathcal{U}} \mathcal{A} ^{res} _{\Sigma _{\infty}} < 1/2,
\end{equation*}
and suppose for simplicity that $\mathcal{A} ^{res} _{\Sigma
_{\infty}}$ is trivial along the edges of $\Delta ^{4} $,
later on this condition is relaxed, see Proposition
\ref{prop:alternative}.  (This condition can be completely
removed but at the cost of significant additional
complexity.) 

%
We say in this case that $\mathcal{A} ^{res} $ is
a \textbf{\emph{sub-quantum resolution}}. The following is
proved in Section \ref{sec:qcurvature}. 
\begin{theorem}  \label{thm:lowerboundsingular}
Let $P \to S ^{4} $ be a non-trivial principal $PU (2)$ bundle.
Let $\mathcal{A}$ be a singular $PU (2)$-connection on $P$ with a single singularity at $x _{0} $. Then for any sub-quantum resolution $\mathcal{A} ^{res}  $ 
of $\mathcal{A}$  and for any $\mathcal{U}$ as above 
$$\energy _{\mathcal{U}}   (\mathcal{A} ^{res}  _{\Sigma _{0}} ) \geq 1/2. $$     
%
\end{theorem}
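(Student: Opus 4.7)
The plan is to argue by contradiction: assume that $\energy_{\mathcal{U}}(\mathcal{A}^{res}_{\Sigma_0}) < 1/2$, and derive a null-concordance of the functor $F_{P_H, \mathbb{Q}}$ associated to the Hamiltonian $S^2$-bundle $P_H$ induced from $P$. This contradicts Theorem \ref{thm:main} once we know $P_H$ is non-trivial.

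First I would pass from $PU(2)$ to $\operatorname{Ham}(S^2,\omega)$. The inclusion $PU(2) \hookrightarrow \operatorname{Ham}(S^2,\omega)$ is a homotopy equivalence (classical result, ultimately via the Smale-type theorem of Gromov that $\operatorname{Ham}(S^2,\omega)$ deformation retracts onto $SO(3)$), so the associated bundle $P_H := P \times_{PU(2)} S^2$ is non-trivial as a Hamiltonian $S^2$-bundle over $S^4$. Moreover, the normalization of the operator norm $\mathfrak{n}$ on $\lie PU(2)$ was chosen precisely so that the Finsler energy of a loop in $PU(2)$ agrees with the Hofer energy of the induced loop in $\operatorname{Ham}(S^2,\omega)$, and in particular the threshold $1/2$ coincides with the symplectic area of a generator of $\pi_2(S^2)$ (equivalently the monotonicity constant of $(S^2,\omega)$). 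Thus the simplicial $PU(2)$-connection $\mathcal{A}^{res}$ pushes forward to a simplicial $\operatorname{Ham}$-connection on $P_H$ with the same $\energy_{\mathcal{U}}$ values.

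Next I would combine the two energy bounds into concordance data. Under the hypothesis $\energy_{\mathcal{U}}(\mathcal{A}^{res}_{\Sigma_0}) < 1/2$, both $\mathcal{A}^{res}_{\Sigma_0}$ (over $\Delta^4$, via the generator $\Sigma_0$ of $\pi_4(S^4,x_0)$) and $\mathcal{A}^{res}_{\Sigma_\infty}$ (the constant map to $x_0$, which is the zero element of $\pi_4$) have energy strictly below the threshold. Because $\mathcal{A}^{res}_{\Sigma_\infty}$ is assumed trivial on the edges of $\Delta^4$, the two 4-simplices can be glued along a common boundary in $S^4_\bullet$; this realises $\Sigma_0$ as homotopic (rel boundary) to $\Sigma_\infty$ through a \emph{simplicial} connection on $P_H$ of total energy $< 1$. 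The point of staying below the threshold $1/2$ on each simplex is that, by the quantum Maslov computation from \cite{cite_SavelyevQuantumMaslov} together with the construction of $F_{P_H,\mathbb{Q}}$ recalled in Part I, no non-constant pseudo-holomorphic $S^2$'s can enter the parametric moduli spaces defining the higher $A_\infty$-structure maps. Consequently the induced Cartesian fibration over this 4-simplex is equivalent to the trivial one, and the functor $\Sigma_0^* F_{P_H,\mathbb{Q}}$ becomes concordant to $\Sigma_\infty^* F_{P_H,\mathbb{Q}}$, i.e.\ to the constant (trivial) functor.

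Finally, the concordance produced over $\Sigma_0$ can be spliced with the sub-threshold data over $\Sigma_\infty$ to yield a null-concordance of $F_{P_H,\mathbb{Q}}$ over all of $S^4$, contradicting Theorem \ref{thm:main}. Therefore $\energy_{\mathcal{U}}(\mathcal{A}^{res}_{\Sigma_0}) \geq 1/2$. The hard part is the middle step: making precise how the sub-threshold bound on $\energy_{\mathcal{U}}$ over each simplex forces contractibility at the $A_\infty$-nerve level. This is exactly where the axioms of the universal family $\mathcal{U}: {\mathcal{S}}^\circ_d \to \Delta^n$ become essential, since they identify $\energy_{\mathcal{U}}$ with the total symplectic area swept out by the structure maps of $\operatorname{Fuk}_\infty(P_H)$, allowing the Gromov compactness / no-bubbling argument to be applied uniformly across all $d$. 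The technical realisation of this step is carried out in Section \ref{sec:qcurvature}, and the edge-triviality hypothesis on $\mathcal{A}^{res}_{\Sigma_\infty}$ is what lets the gluing be performed without further correction terms (removed in Proposition \ref{prop:alternative}).
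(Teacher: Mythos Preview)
Your overall contradiction strategy and the passage from $PU(2)$ to $\operatorname{Ham}(S^2,\omega)$ are fine, and you have correctly identified that the sub-threshold energy hypothesis should force vanishing of the relevant parametric moduli spaces (this is the content of Lemma~\ref{lemma:empty0} and the use of \cite[Lemma~\ref{lemma:gluinglowerbound}]{cite_SavelyevQuantumMaslov}). However, the middle step of your argument has a genuine gap, and the paper proceeds differently there.

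The gap is in the claim that sub-threshold energy over \emph{each} of $\Sigma_0,\Sigma_\infty$ yields a \emph{null-concordance of $F_{P_H,\mathbb{Q}}$ over all of $S^4$}. Small energy over a single $4$-simplex $\Sigma$ only tells you that the specific products $\mu^d_{\Sigma}(\gamma^1,\ldots,\gamma^d)$ vanish for $2<d\le 4$ (Lemma~\ref{lemma:empty0}); it does \emph{not} say that the $A_\infty$ category $F^{raw}(\Sigma)$, or the Cartesian fibration over the cell, is trivialised. What it buys is much more limited: by Lemma~\ref{lemma:conditionsfJ} and Lemma~\ref{lemma:corellator}, a single distinguished $4$-simplex exists in $NF^{raw}(\Sigma)$, i.e.\ a \emph{section} of $K(P)$ over that cell. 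Your ``splicing'' of two local trivialisations into a global null-concordance is exactly the step that fails in general --- compatibility on the overlap is precisely what the quantum Maslov class obstructs. (Also, $\Sigma_0$ and $\Sigma_\infty$ are not homotopic rel boundary as maps into $S^4$, and the ``total energy $<1$'' remark is not the relevant threshold; each simplex must individually lie below $\hbar=1/2$.)

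The paper's route avoids this. It proves the stronger alternative Theorem~\ref{prop:alternative}: assuming only $\energy_\mathcal{U}(\mathcal{A}_{\Sigma_+})<\hbar-5\kappa$ (with $\Sigma_+=\Sigma_\infty$, $\Sigma_-=\Sigma_0$, and $\kappa=0$ by the edge-triviality hypothesis), the data can be made unexcited. Then Theorem~\ref{thm:noSection} (no section of $K(P)$) together with Lemma~\ref{lemma:corellator} forces $[\mu^4_{\Sigma_-}(\gamma_1,\ldots,\gamma_4)]\neq 0$, hence the moduli space $\overline{\mathcal{M}}(\{\Theta_r\},A_0)$ is nonempty, and the energy lower bound on $\Sigma_-=\Sigma_0$ follows directly from the existence of that holomorphic section via \cite[Lemma~\ref{lemma:gluinglowerbound}]{cite_SavelyevQuantumMaslov}. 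So the paper never needs to produce a null-concordance; it contradicts Theorem~\ref{thm:noSection} (not Theorem~\ref{thm:main}) and extracts the bound from a single curve. Your argument can be repaired along these lines: replace ``null-concordance of $F_{P_H,\mathbb{Q}}$'' by ``global section of $K(P)$'' and invoke Theorem~\ref{thm:noSection} instead.
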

The theorem has certain extensions to Hamiltonian
singular connections $\mathcal{A}$, understanding
$P$ as a principal $\operatorname {Ham}  (S ^{2} )$ bundle, 
see Section \ref{sec:qcurvature}. 

\begin{example}
   \label{example:singularconnection} Let
   $P$ be as above, and $\mathcal{A}' $   be an
   ordinary
   smooth $PU (2) $ connection on  $P$. Express
   $S ^{4} $ as a union of sub-balls $D ^{4}
   _{\pm} \subset S ^{4}$,
    intersecting only in the boundary. Suppose that we have the 
   property that $\energy  _{\mathcal{U}}(\mathcal{A} ') \,|\, _{D ^{4} _{-}} < \frac{1}{2}.$  Let $\mathcal{A} $ be
   the singular connection on $P$ obtained as  the
   push-forward of $\mathcal{A} '$ by the bundle
   map $\widetilde{q}: P \to P$ over the singular smooth map
   $q: S ^{4} \to S ^{4}$ taking $D ^{4} _{-}$ to a
   single point $\infty \in S ^{4}$, with $q|
   _{interior \, D ^{4} _{-}}$ an
   immersion.  
   Then
   $\mathcal{A} $ has a sub-quantum
   resolution $\mathcal{A} ^{res} $ by
   construction.
   In this case, the theorem above
    simply yields that  $\energy _{\mathcal{U}}   (\mathcal{A}' _{D ^{4}_+}   ) \geq 1/2. $ 
\end{example} 
\begin{remark} \label{rem_}
There are possible physical interpretations for singular
connections, as appearing in the context here.   A $PU (2) $
connection $\mathcal{A} $ on $P$ in physical terms represents
a Yang-Mills field on the space-time $S ^{4}$.
When the space-time has a black hole singularity,
the fields solving the Einstein-Yang-Mills
equations (mathematically connections as above) likewise
develop singularities.  There is a wealth of
physics literature on this subject, here is one sample reference ~\cite{cite_YangMillsBlackHoles}. 
 As quantum gravity is often related to simplicial
ideas, it is not inconceivable that the
mathematical sub-quantum
resolution condition above also has a (quantum gravity
theoretic) physical interpretation.    
\end{remark}

At this point the reader may be curious why
Theorem \ref{thm:main} has something to do with
Theorem \ref{thm:lowerboundsingular}. We cannot give the full story,
but the idea is that the Cartesian fibration $Fuk
_{\infty} (P) $ only sees the principal bundle $P$
(and its curvature) by the behavior of certain
holomorphic curves. When one has the sub-quantum
condition on the curvature of $\mathcal{A} ^{res}
_{\Sigma _{\infty}}$, certain holomorphic curves
are ruled out so that from the view point of $Fuk
_{\infty} (P) $, $\mathcal{A} ^{res} _{\Sigma
_{\infty}} $ is the trivial connection, (its
curvature is undetectable)  but $Fuk _{\infty} (P) $
is non-trivial as a fibration so that the aforementioned holomorphic curves and consequently curvature must
appear elsewhere.

\subsection {First quantum obstruction and smooth
invariants} \label{sec:firstquantumobstruction}
We present
here a construction of an integer valued smooth invariant which
is based on our theory. This is probably just the
beginning of the story for invariants of smooth
manifolds based on Floer-Fukaya theory. 

First we discuss a more general setup.
Let $M \hookrightarrow P \xrightarrow{p} X$ be a Hamiltonian $M$-bundle, as
previously. Let $$\operatorname {Fuk} _{\infty} (P)
\xrightarrow{p _{\bullet }} X _{\bullet }$$
be the associated Cartesian fibration, and let  $$K
(P) \to X _{\bullet}$$ be its maximal Kan sub-fibration as in
Lemma \ref{lemma:kanfib}. Then $|K (P)| \to X$ is
a Serre fibration, where $|K (P) |$ is the geometric
realization.

Define 
\begin{equation*} \qobs (P) \in \mathbb{N}
\sqcup \{\infty\}, 
\end{equation*} to be the degree of the first obstruction to
a section of $|K (P)|$. That is $\qobs (P)$ is the smallest integer $n$ such that there is no section of $|K (P)|$ over
the $n$ skeleton of $X$, with respect to some chosen 
CW structure. This is independent of the choice
of the CW structure, as any pair of CW structures on $X$ are
filtered (using cellular filtration)  homotopy equivalent up
to a wedge sum with some collection of $D ^{n}$, $n \in \mathbb{N} $  (with its canonical CW structure),
see Faria~\cite[Theorem 2.4]{cite_FilteredHomotopyTypeFaria}.

When no such $n$ exists we set $\qobs (P) = \infty$.
\begin{theorem}
   \label{thm:firstquantumobstruction} Let $S ^{2}
   \hookrightarrow P \to S ^{4} $ be a non-trivial
   Hamiltonian fibration then:
\begin{equation*}
   \qobs (P)= 4. 
\end{equation*} 
\end{theorem}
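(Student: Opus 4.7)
The plan is to prove the two inequalities $\qobs(P) \geq 4$ and $\qobs(P) \leq 4$ separately, using the CW-invariance statement cited from Faria to reduce to a convenient CW structure on $S^4$. Take the minimal CW decomposition with one $0$-cell $x_0$ and one $4$-cell. For the lower bound, observe that the $k$-skeleton for $k \leq 3$ is simply $\{x_0\}$, and the restriction of $|K(P)|$ to a point is a single Kan complex $F := |K(\operatorname{NFuk}(S^2,\omega))|$. A section over a point is just a choice of vertex of $F$, and such a vertex exists (for instance the Lagrangian equator of $S^2$ as an object of $\operatorname{Fuk}(S^2,\omega)$). This gives $\qobs(P) \geq 4$.

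For the upper bound one must show that no global section $s: S^4 \to |K(P)|$ exists. The Serre fibration $F \to |K(P)| \to S^4$ has its primary obstruction to sections living in $H^4(S^4; \pi_3(F)) \cong \pi_3(F)$, and the plan is to identify this obstruction class with the quantum Maslov class from \cite{cite_SavelyevQuantumMaslov} used in the proof of Theorem \ref{thm:main}. Concretely, one pulls back the Cartesian fibration $\operatorname{Fuk}_\infty(P) \to S^4_\bullet$ along the characteristic map of the $4$-cell; over the disk $D^4$ the pullback can be trivialized via the same basepoint-vertex of $F$ used above; the restriction to $\partial D^4 = S^3$ then defines a clutching element of $\pi_3(F)$. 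One checks that this element is exactly the primary section obstruction, and simultaneously that it coincides, under the identifications from Part I and \cite{cite_SavelyevQuantumMaslov}, with the quantum Maslov invariant of $P$. The non-vanishing of the quantum Maslov class for non-trivial Hamiltonian $S^2$-bundles over $S^4$, already established in the proof of Theorem \ref{thm:main}, then produces the required contradiction.

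The main technical obstacle is the identification of the homotopy-theoretic primary obstruction with the geometric quantum Maslov invariant. Both are classes in $\pi_3(F)$, but matching them requires carefully tracking the simplicial construction of $F_P$ from Part I against the holomorphic-curve definition of the quantum Maslov class in \cite{cite_SavelyevQuantumMaslov}: one must verify that the Floer-theoretic data entering the quantum Maslov construction is precisely the clutching data extracted from the $4$-cell of $S^4$ in the homotopy-theoretic obstruction picture, including compatibility of orientations and normalizations. Once this identification is in place, the non-vanishing computation already carried out in support of Theorem \ref{thm:main} immediately gives $\qobs(P) \leq 4$, completing the proof.
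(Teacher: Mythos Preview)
Your proposal is correct and aligned with the paper's approach, but the packaging differs. The paper's actual proof of this theorem is two lines: the lower bound is exactly as you say (the $3$-skeleton of $S^4$ is a point), and the upper bound is obtained by citing Lemma~\ref{lemma:nontrivialelement}, which asserts directly that the partial section $sec$ over $\partial D^{4,mod}_\bullet$ does not extend over $D_-$. What you have sketched is essentially the \emph{strategy} behind that lemma, phrased in classical obstruction-theory language (clutching element in $\pi_3(F)$, primary obstruction in $H^4$), whereas the paper works concretely at the level of simplicial sets and the $A_\infty$ nerve.

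One point worth sharpening: you speak of identifying the clutching element in $\pi_3(F)$ directly with the quantum Maslov invariant, but the quantum Maslov class $\Psi(a)$ lives in Floer homology $FH(L_0,L_4)$, not in $\pi_3(F)$. The paper does not attempt a direct comparison of these two objects. Instead, the link is mediated by the $A_\infty$ structure: Lemma~\ref{lemma:corellator} shows that the extension simplex $T$ exists if and only if $\mu^4_{\Sigma_-}(\gamma_1,\ldots,\gamma_4)$ is exact, and the subsequent sections compute $[\mu^4_{\Sigma_-}(\gamma_1,\ldots,\gamma_4)] = \Psi(a)$. So the ``identification'' you flag as the main technical obstacle is carried out in the paper not as an equality of homotopy classes but as a detection statement: the obstruction is nonzero because a certain Floer class it controls is nonzero. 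Your high-level outline is compatible with this, but the concrete mechanism through $\mu^4$ is what actually makes the argument go.
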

In fact we prove that the 
obstruction class in $$H ^{4} (S ^{4}, \pi _{3}
(\operatorname {NFuk} (S ^{2})) $$ is non-trivial.
\subsubsection {First quantum obstruction as a
manifold invariant} \label{sec:manifoldinvariant}
Let $X$ be a smooth manifold, and let $P (X)$
denote the fiber-wise projectivization of $TX
\otimes \mathbb{C}$. 
We then define 
\begin{equation*}
   \qobs (X):= \qobs (P (X)) \in \mathbb{N} \sqcup \{\infty\},
\end{equation*}
which is then an invariant of the smooth manifold
$X$. 
It should be noted that this ``first
quantum obstruction'' invariant is only sensitive
to the tangent bundle, whereas for example
Donaldson invariants can see finer aspects of the
smooth structure. 
In fact the ``quantum Novikov
conjecture'' of Part I would immediately imply
that the first quantum obstruction is only a
topological invariant of $X$, but in that case it is very likely a new
topological invariant (not expressible in terms like
Pontryagin numbers).
\section {Acknowledgements} I am grateful to RIMS institute
at Kyoto university and Kaoru Ono for the invitation,
financial assistance and a number of discussions which took
place there. Much thanks also ICMAT Madrid and Fran Presas
for providing financial assistance, and a lovely research
environment during my stay there. I have also benefited from 
conversations with (in no particular order) Hiro Lee Tanaka,
Mohammed Abouzaid, Kevin Costello, Bertrand Toen and Nick
Sheridan.
\tableofcontents
\section{Preliminaries} \label{sec_preliminaries}
We quickly recall some notation from Part I and some other
basics for convenience.
We denote by $\D$ the simplex category:
\begin{itemize}
	\item The set of objects of $\Delta^{} $ is $\mathbb{N}$.
	\item $\hom _{\D}
 (n, m) $ is the set of non-decreasing maps $[n] \to [m]$, where $[n]= \{0,1, \ldots, n\}$, with its natural order.
\end{itemize} 
A simplicial set $X$ is a functor                                 
\begin{equation*}
   X: \D ^{op}  \to Set.
\end{equation*} 
The set $X (n)$ is called the set of $n$-simplices of $X$.
$\D ^{d} _{\bullet}  $ will denote a particular
simplicial set: the standard representable
$d$-simplex, with $$\D ^{d} _{\bullet} (n) = hom _{\D} (n, d).                                         
$$ 

We use notation $\Delta ^{n} $ to denote the standard topological $n$-simplex.
\begin{definition} \label{definition_simplexcategory} For
$Y$ smooth manifold, $\Delta (Y)$ will denote the
\textbf{\emph{smooth simplex category of $Y$}}. This is the
category s.t.: 
\begin{itemize}
	\item The set of objects $\obj \Delta (Y) $  is the set
of smooth maps:
$$\Sigma: \Delta ^{d} \to X, \quad d \geq
   0. $$
	 \item Morphisms $f: \Sigma _{1} \to \Sigma
      _{2}$ are commutative diagrams in $s-Set$:
\begin{equation} \label{eq:overmorphisms1}  
\begin{tikzcd}
\D ^{d}   \ar[r, ""] \ar [dr,
   "\Sigma _{1}"] &  \D ^{n}  \ar
   [d,"\Sigma _{2}"], \\
    & X.
\end{tikzcd}
\end{equation}
where $\Delta ^{n} \to \Delta ^{m}  $  is a simplicial
map, that is an affine map taking vertices to vertices, preserving the order.
\end{itemize}
\end{definition}
When $Y$ is a smooth manifold $Y _{\bullet}
$ will denote the smooth singular set of $X$. That is $X
_{\bullet } (n)   $ is the set of smooth maps  $\Delta^{n}
\to X$.   

If $p: X \to Y$ is a map of spaces, $p _{\bullet}: X _{\bullet} \to Y _{\bullet}  $ will mean the induced simplicial map. 
We will denote abstract Kan complexes or
quasi-categories by calligraphic letters e.g.
$\mathcal{X}, \mathcal{Y}$.

\section{Outline} \label{section.application} 
In what follows, when we say Part I we shall mean
\cite{cite_SavelyevGlobalFukayaCategoryI}.
We will mostly follow the notation and setup of Part
I, as well as some setup of
~\cite{cite_SavelyevQuantumMaslov}. The reader may review the basics of simplicial
sets, in Section 3 of Part I. For a more detailed introduction, which also includes some theory of
quasi-categories, we recommend Riehl
~\cite{cite_RiehlAmodelstructureforquasi-categories}.    Here are some specific summary points.
Let us briefly review what we do in Part I. Let $M
\hookrightarrow P \xrightarrow{p} X$ be a Hamiltonian
fibration (with monotone compact fibers).

As in Part I, an auxiliary perturbation data $\mathcal{D}$
for $P$, (in particular) involves: 
\begin{itemize}
	\item A choice of a natural system $\mathcal{U} $, see
	Section \ref{sec:constructionSmallData}, consisting of
	certain maps
	\begin{equation*}
{u}: {\mathcal {S}} ^{\circ}  _{d}  \to \Delta
^{n}, \text{with $d,n$ varying},
\end{equation*} as already discussed in Section
\ref{sec:non-metric} of the Introduction.
	\item Choices of certain Hamiltonian connections, on
	Hamiltonian bundles associated to the maps $u$ above.
  We partially review this in Sections \ref{sec_reviewHam},
	\ref{sec:constructionSmallData}.
\end{itemize}
Let $A _{\infty}Cat ^{\mathbb{Z} _{2}} _{k}$ denote the category
of cohomologically unital $A _{\infty}$ categories over
a commutative ring $k$, with $\mathbb{Z} _{2}$ graded hom
complexes, with morphisms $\mathbb{Z} _{2}$ graded $A
_{\infty}$ functors.
Given  $\mathcal{D} $ as above, in Part I we constructed
a functor $$F _{P,k}: \Delta (X) \to A _{\infty}Cat ^{\mathbb{Z}
_{2}} _{k}.$$ This functor
has certain additional properties. 
 
The functor $F _{P,k}$ takes all morphisms to quasi-isomorphisms. 
Moreover, there is an
algebraically induced functor 
$$F ^{unit} _{P,k}: \Delta (X) \to  wA _{\infty}Cat ^{\mathbb{Z}
_{2}} _{k},  $$ 
where $wA _{\infty}Cat ^{\mathbb{Z} _{2}} _{\mathbb{Q}}
\subset A _{\infty}Cat ^{\mathbb{Z} _{2}}$ denotes the
sub-category consisting of strictly unital $A _{\infty}$
categories, with morphisms unital quasi-isomorphisms. 
$F ^{unit} _{P,k}$ is constructed by taking unital
replacements. 
\begin{notation}
\label{not_} In what follows we rename $F _{P,k}$
by $F ^{raw} _{P} $ (raw as in having the raw data of
holomorphic curves) and $F ^{unit} _{P,k} $ by $F _{P}$,
with $k$ implicit. Then the notation matches the one in the
Introduction.

\end{notation}

Let $N$ be the $A _{\infty}$ nerve functor of
Faonte-Lurie as mentioned in the Introduction. 
Taking $k=\mathbb{Q} $ we defined in Part I: 
\begin{equation} \label{eq_FukinftyStrict}
\operatorname {Fuk} _{\infty} (P) = \colim
_{\Delta (X)} N \circ F _{P}. 
\end{equation}
The latter is shown in Part I to be an
$\infty$-category, and moreover it has the structure of a Cartesian fibration:
\begin{equation*}
\operatorname {NFuk} (M,  \omega)
\hookrightarrow \operatorname {Fuk} _{\infty} (P)
\xrightarrow{p _{\bullet }} X _{\bullet}.
\end{equation*}
The equivalence class of the latter under
concordance, see Definition \ref{def:concordancefibration},
is independent of all choices. Note that this is also a property of $F _{P}$. 

In general, given a functor $$F: \Delta^{} (X) \to
wA _{\infty}Cat ^{\mathbb{Z} _{2}} _{\mathbb{Q}},$$
\begin{equation} \label{eq_Fukinfty}
\hocolim
_{\Delta (X)} N \circ F, 
\end{equation}
is a Cartesian fibration over $X _{\bullet}$. Or more
properly stated, there is a representation of this homotopy
colimit that is naturally a Cartesian fibration over the
classical nerve $N(\Delta^{} (X)) $. The latter nerve is
weakly equivalent to $X _{\bullet }$. To get such
a representation we can use the ``Grothendieck
construction'' see
Lurie~\cite[Definition
A.3.5.11]{cite_LurieHighertopostheory}.  

Now for $F
_{P}$ as above $$\hocolim _{\Delta (X)} N \circ F _{P} \simeq
Fuk _{\infty } (P)$$ as Cartesian fibrations. This is again
a property of $F _{P}$, (it is a fibrancy type condition).
The latter assertion is not proved in Part I, but readily
follows the setup of Part I and basic theory of colimits of
simplicial sets.

\subsection{From a Cartesian fibration to a Kan fibration} \label{sec_From a Cartesian fibration to a Kan fibration}
We will
extract from $Fuk _{\infty } (P)$ a Kan fibration and work with that, since then we can just use standard tools of topology.

To this end we have the following elementary lemma.
\begin{lemma} \label{lemma:kanfib}
Suppose we have a Cartesian fibration $p:
   \mathcal{Y}
\to \mathcal{X}$, where $\mathcal{X} $ is a Kan complex. Let
   $K (\mathcal{Y})$
denote the maximal Kan sub-complex of $\mathcal{Y} $ then $p:
K (\mathcal{Y}) \to \mathcal{X}$ is a Kan fibration.
\end{lemma}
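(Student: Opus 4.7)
The plan is to avoid direct horn-by-horn combinatorics and instead reduce to two standard facts from Lurie's \emph{Higher Topos Theory}: (i) a Cartesian fibration in which every edge of the total space is $p$-Cartesian is a right fibration, and (ii) a right fibration whose base is a Kan complex is a Kan fibration. Given these, the proof becomes a short verification.

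First I would check that $p\colon K(\mathcal{Y})\to\mathcal{X}$ is still an inner fibration. Inner horns $\Lambda^n_k\to K(\mathcal{Y})$ with $0<k<n$ admit fillers in $\mathcal{Y}$ because $p$ is Cartesian hence inner. For $n\geq 3$ all $1$-simplices of $\Delta^n$ already lie in $\Lambda^n_k$, so every edge of the filler is an equivalence by hypothesis and the filler lies in $K(\mathcal{Y})$. For $n=2$ the one missing edge is a composition of two equivalences in $\mathcal{Y}$, hence an equivalence. Thus the filler always lies in $K(\mathcal{Y})$.

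Next I would verify that $p\colon K(\mathcal{Y})\to\mathcal{X}$ inherits the Cartesian fibration property. Given $y\in K(\mathcal{Y})$ and an edge $\sigma\colon x\to p(y)$ of $\mathcal{X}$, the Cartesian structure of $p\colon\mathcal{Y}\to\mathcal{X}$ yields a $p$-Cartesian lift $\tilde{\sigma}\colon y'\to y$ in $\mathcal{Y}$. Because $\mathcal{X}$ is a Kan complex, $\sigma$ is an equivalence; by HTT~2.4.1.5(2), a $p$-Cartesian edge whose image is an equivalence is itself an equivalence in $\mathcal{Y}$. Hence $\tilde{\sigma}$ lies in $K(\mathcal{Y})$. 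The same argument, run in the other direction, shows that \emph{every} edge of $K(\mathcal{Y})$ is $p$-Cartesian: it is an equivalence in $\mathcal{Y}$, its projection to $\mathcal{X}$ is automatically an equivalence, and HTT~2.4.1.5(2) then identifies it as $p$-Cartesian.

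Applying the criterion that a Cartesian fibration with all edges Cartesian is a right fibration (HTT~2.4.2.4), we conclude that $p\colon K(\mathcal{Y})\to\mathcal{X}$ is a right fibration. Since $\mathcal{X}$ is a Kan complex, HTT~2.1.3.3 upgrades this to a Kan fibration, as desired. The only nontrivial step is the transfer of the Cartesian structure to $K(\mathcal{Y})$; what makes this work cleanly is the crucial observation that the Kan hypothesis on $\mathcal{X}$ turns every edge of the base into an equivalence, so Lurie's characterization of Cartesian edges over equivalences identifies the Cartesian edges of $\mathcal{Y}$ lying over $\mathcal{X}$ with the equivalences in $\mathcal{Y}$—exactly the edges belonging to $K(\mathcal{Y})$.
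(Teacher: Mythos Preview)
Your argument is correct, and the first half---showing that $p|_{K(\mathcal{Y})}$ remains an inner fibration via the $n=2$ versus $n\geq 3$ case split on the missing edge---coincides exactly with what the paper does. After that the two proofs diverge.

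The paper actually proves a \emph{stronger} statement (its Lemma~\ref{lemma:innerfibkan}): one only needs $p$ to be an inner fibration, not a Cartesian fibration. Having shown that $p|_{K(\mathcal{Y})}$ is an inner fibration, the paper observes that the total space $K(\mathcal{Y})$ is a Kan complex by definition and that the base $\mathcal{X}$ is a Kan complex by hypothesis; it then invokes (somewhat tersely) the fact that an inner fibration between Kan complexes is automatically a Kan fibration. No Cartesian lifts are ever used.

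Your route instead spends the Cartesian hypothesis: you check that Cartesian lifts land in $K(\mathcal{Y})$, that every edge of $K(\mathcal{Y})$ is $p$-Cartesian (both via the characterization of Cartesian edges over equivalences), then quote HTT~2.4.2.4 and HTT~2.1.3.3 to pass to a right fibration and finally to a Kan fibration. This is entirely valid but less general, since it genuinely uses Cartesian-ness. What it buys you is that every step is pinned to a citable proposition, whereas the paper's endgame is a one-line appeal to a fact it does not spell out. Conversely, the paper's version applies to any inner fibration over a Kan complex, which is a strictly broader class.
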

The proof is given in Appendix \ref{appendix:Kan}. 
In particular by the above lemma $$K (P):=K (\operatorname
{Fuk} _{\infty}(P)) \xrightarrow{p _{\bullet }} X _{\bullet }$$ is a Kan fibration. 
\begin{notation}
\label{notation:} In what follows $p _{\bullet }$ will
refer to this projection unless specified otherwise.
\end{notation}

\begin{definition} \label{def:concordancefibration}
We say that a Kan fibration, or a Cartesian
fibration $\mathcal{P} $ over a Kan complex
$\mathcal{X} $ is \textbf{\emph{non-trivial}}
if it is not null-concordant. Here $\mathcal{P}
$ is \textbf{\emph{null-concordant}} means that
there is a Kan respectively Cartesian
fibration $$\mathcal{Y}  \to \mathcal{X}
\times \Delta ^{1} _{\bullet} ,$$ whose
pull-back by $i _{0}: \mathcal{X}   \to
\mathcal{X}  \times \Delta ^{1} _{\bullet}   $
is trivial and by $i _{1}: \mathcal{X}  \to
\mathcal{X}  \times \Delta ^{1} _{\bullet}   $
is $\mathcal{P} $. Here the two maps $i_{0}, i_{1}$ correspond to the two vertex inclusions $\Delta ^{0} _{\bullet}  \to \Delta ^{1} _{\bullet}   $.
\end{definition}
\begin{theorem} \label{thm:noSection}
   Suppose that $p: P \to S ^{4} $ is a non-trivial Hamiltonian
	 $S ^{2} $ fibration then $p _{\bullet}: K (P) \to S ^{4}
	 _{\bullet}  $ does not admit a section. In particular $K
	 (P)$ is a non-trivial Kan fibration over $S ^{4}
	 _{\bullet}  $ and so $Fuk _{\infty} (P) $ is
	 a non-trivial Cartesian fibration over $S ^{4} _{\bullet}  $.
\end{theorem}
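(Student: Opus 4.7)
The strategy is to reduce the non-existence of a section to classical obstruction theory for a Serre fibration, and then identify the resulting obstruction class with the quantum Maslov computation of $\cite{cite_SavelyevQuantumMaslov}$.

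First I would pass to geometric realizations. The Kan fibration $p _{\bullet } \colon K(P) \to S ^{4} _{\bullet }$ realizes to a Serre fibration $|K(P)| \to |S ^{4} _{\bullet }| \simeq S ^{4}$ with fiber $|K(\operatorname{NFuk}(S ^{2}, \omega))|$, and a simplicial section of $p _{\bullet }$ is equivalent (via the standard Quillen adjunction between simplicial sets and topological spaces, using that $S ^{4} _{\bullet }$ is a Kan complex) to a section of this Serre fibration. Equip $S ^{4}$ with the minimal CW-structure with one $0$-cell and one $4$-cell. Since the fiber is path-connected, classical obstruction theory identifies the sole obstruction to a section as a class
\[
o(P) \in H ^{4} \bigl(S ^{4}, \pi _{3}(|K(\operatorname{NFuk}(S ^{2}, \omega))|)\bigr),
\]
and $o(P)$ is the image of the classifying class $[P] \in \pi _{4}(B\operatorname{Ham}(S ^{2}, \omega)) \cong \pi _{3}(\operatorname{Ham}(S ^{2}, \omega))$ under the monodromy homomorphism induced by the universal functor $F$ of Part I.

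The core step is the identification of this monodromy homomorphism with the quantum Maslov class. By construction $F$ encodes the $\operatorname{Ham}(S ^{2}, \omega)$-action on $\operatorname{Fuk}(S ^{2}, \omega)$ via counts of holomorphic sections of associated Hamiltonian bundles, and the induced map
\[
\pi _{3}(\operatorname{Ham}(S ^{2}, \omega)) \longrightarrow \pi _{3}(|K(\operatorname{NFuk}(S ^{2}, \omega))|)
\]
agrees, up to the canonical identifications of Part I, with the quantum Maslov class. The computation in $\cite{cite_SavelyevQuantumMaslov}$ shows this class is non-zero on the generator of $\pi _{3}(\operatorname{Ham}(S ^{2}, \omega)) \cong \mathbb{Z}$, which is precisely the class represented by the non-trivial Hamiltonian $S ^{2}$-bundle $P \to S ^{4}$. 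Hence $o(P) \neq 0$ and no section of $p _{\bullet }$ exists.

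The remaining two conclusions are formal. If $K(P)$ were null-concordant via a Kan fibration $\mathcal{Y} \to S ^{4} _{\bullet } \times \Delta ^{1} _{\bullet }$ trivializing at $i _{0}$, then a section of the product fibration at $i _{0}$ would lift along the $\Delta ^{1} _{\bullet }$-direction (using the Kan fibration property of $\mathcal{Y}$ and the contractibility of $\Delta ^{1} _{\bullet }$) to a section of $\mathcal{Y}$; restricting at $i _{1}$ would yield a section of $K(P)$, contradicting the first part. Similarly, the assignment of maximal Kan sub-complex is functorial on Cartesian fibrations, so by Lemma \ref{lemma:kanfib} any null-concordance of $\operatorname{Fuk} _{\infty}(P)$ as a Cartesian fibration would descend to a null-concordance of $K(P)$ as a Kan fibration, again contradicting the first part. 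The main obstacle is the middle identification: matching the abstractly defined monodromy element of $\pi _{3}(|K(\operatorname{NFuk}(S ^{2}, \omega))|)$ with the explicit quantum Maslov invariant defined by holomorphic curve counts. This is the geometric heart of the argument, where the entire machinery of Part I interfaces with the computation of $\cite{cite_SavelyevQuantumMaslov}$.
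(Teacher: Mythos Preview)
Your high-level strategy matches the paper's: the non-existence of a section is an obstruction-theoretic statement, and the obstruction is ultimately identified with the quantum Maslov class of \cite{cite_SavelyevQuantumMaslov}. The formal deductions in your last paragraph (non-section $\Rightarrow$ Kan non-triviality $\Rightarrow$ Cartesian non-triviality, via Lemma \ref{lemma:kanfib}) also agree with the paper.

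Where the approaches diverge is in the implementation of the core step. The paper does not pass to geometric realizations and invoke an abstract monodromy homomorphism. Instead it works directly in the simplicial model with carefully chosen perturbation data: it introduces \emph{unexcited} data $\mathcal{D}$ (Definition \ref{def:smalldata}), uses it to build an explicit $4$-simplex $\sigma$ over one hemisphere $D_+$ and its boundary restriction $sec$, and shows (Lemma \ref{lemma:corellator}) that $sec$ extends over $D_-$ if and only if the $A_\infty$ product $\mu^4_{\Sigma_-}(\gamma_1,\ldots,\gamma_4)$ is exact in Floer homology. The non-exactness is then established in Sections \ref{sec:constructionSmallData}--\ref{section:productAndHigherSeidel} by constructing specific connections $\{\mathcal{A}_r\}$ and a chain of $A_0$-admissible concordances of Hamiltonian structures, which step by step reduce the curve count defining $\mu^4$ to the quantum Maslov evaluation $\Psi(a)$.

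The sentence ``the induced map $\pi_3(\operatorname{Ham}(S^2,\omega)) \to \pi_3(|K(\operatorname{NFuk}(S^2,\omega))|)$ agrees, up to the canonical identifications of Part I, with the quantum Maslov class'' is precisely what requires proof, and Part I does not supply it. That identification is the content of Lemma \ref{lemma:nontrivialelement} and its supporting sections, and occupies the bulk of this paper. So while your outline is architecturally correct and you rightly flag this as the main obstacle, what you have written is a statement of the plan rather than a proof: the step you defer \emph{is} the argument.
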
 
This is the main technical result of the paper.
Although in a sense we just are just deducing
existence of a certain holomorphic curve, for this
deduction we need a global compatibility condition
involving multiple moduli spaces, involved in
multiple local datum's of Fukaya categories, so
that this computation will not be straightforward.

\begin{definition}\label{def_concordancefunctor}
Let $I= [0,1]$. We say that two functors $$F _{0}, F _{1}: \Delta^{} (X) \to
w A _{\infty}Cat ^{\mathbb{Z} _{2}} _{k}$$ are
\textbf{\emph{concordant}}  if there is a functor
$$\widetilde{F}: \Delta^{} (X \times I) \to w A _{\infty}Cat
^{\mathbb{Z} _{2}} _{k}$$ s.t.
$\widetilde{F}| _{\Delta^{} (Y \times \{0\})} = F _{0} $ and $\widetilde{F}| _{\Delta^{}
(Y \times \{1\})} = F _{1} $. 
\end{definition}

\begin{proof} [Proof of Theorem \ref{thm:main}]
Suppose otherwise that we have a concordance $\widetilde{F}
$ of $F _{\mathbb{Z} }$ to a constant functor $Const$. 
Then tensoring (all the relevant chain complexes) with
$\mathbb{Q} $ we get a null-concordance of $F _{\mathbb{Q}
}$. Applying the construction \eqref{eq_Fukinfty} we get
a null-concordance of the Cartesian fibration $Fuk _{\infty} (P)$, for $P$ as in
Theorem \ref{thm:noSection}. But this is
a contradiction to the latter theorem. 

The second part of the theorem is deduced as follows.
Suppose by contradiction that $P \not \simeq P'$ and $F
_{P}$ is concordant to $F _{P'}$ so that 
\begin{equation} \label{eq_concordance}
Fuk _{\infty } (P
) \sim Fuk _{\infty } (
{P'}), 
\end{equation}
where $\sim$ denotes the concordance relation.

Now $P = P _{g}$ and $P'
= P _{g'}$, where the right hand side is as in Section
\ref{sec_Ar}. That is $g,g'$ are the corresponding clutching
maps in $\pi _{3} \operatorname {Ham} (S ^{2}) $, and $g
\neq g'$.

Let $\#$ denote the natural bundle connect sum operation.  We have: 
\begin{equation*}
\begin{split}
Fuk _{\infty } (P _{g ^{-1} \cdot g'}) & \simeq Fuk _{\infty} (P
_{g ^{-1}}) \# Fuk _{\infty } (P _{g'}) \text{ by basic
topology and definitions} \\
& \sim Fuk _{\infty} (P _{g ^{-1}}) \# Fuk _{\infty } (P
_{g}) \text{ by \eqref{eq_concordance}} \\
& \simeq Fuk _{\infty } (P _{g ^{-1} \cdot g}).
\end{split}
\end{equation*}
So we get that $Fuk _{\infty } (P _{g ^{-1} \cdot g'})$
is null-concordant. But $P _{g ^{-1} \cdot g'}$ is by assumption a non-trivial
bundle, so that we get a contradiction to the first part of
the theorem.
\end{proof}
\begin{proof} [Proof of Theorem \ref{cor_ktheory}] We can only sketch this, as
there is much prerequisite algebra that is beyond the
scope here. Recall that a finite dimensional complex vector bundle over $S ^{n}$, induces
a non-trivial class in $\pi _{n} (KU \simeq BU \times
\mathbb{Z} ) $ if it has a non-trivial Chern class. The
proof of the corollary is analogous. 

Note that $\pi _{4} (\operatorname {BHam} (S ^{2}, \omega))
\simeq \pi _{3} (SO (3)) = \mathbb{Z}  $, by Smale's
theorem. By 
~\cite[Corollary 6.2]{cite_SavelyevAlgKtheory} we get a homomorphism:
\begin{equation*}
\mathbb{Z} \simeq \pi _{4} (\operatorname {BHam} (S ^{2},
\omega)) \to K ^{Cat, \mathbb{Z} _{2}} _{4} (k).
\end{equation*}

The proof of Theorem \ref{thm:main} is
implicitly by a computing a 
certain characteristic class associated to
a functor of type $F _{P,\mathbb{Q} }$. \footnote {This is
something like the quantum Maslov class also
appearing here but defined algebraically in the
context of a general functor $F: \Delta^{}(X) \to
wA _{\infty}Cat _{k} ^{\mathbb{Z} _{2}} $.}  And these
characteristic classes determine $P$ up to isomorphism. 
In a future work we will make these characteristic  
classes explicit.   

Given this, one proceeds as in the classical case of complex
vector bundles, as recalled above.
\end{proof}

The proof of Theorem \ref{thm:noSection} will be aided by constructing suitable
perturbation data, and will be split into a number of
sections.  


\section{Qualitative description of the perturbation data}
\label{sec_qualitativedescription}
We are now going to discuss the perturbation data needed for
the construction of the functor $F _{P}: \Delta^{} (S ^{4})
\to wA _{\infty}Cat _{\mathbb{Q} } ^{\mathbb{Z} _{2}}$, where
$P$ from now on is as in Theorem \ref{thm:noSection}.

Let $\operatorname {Fuk} (S ^{2}  )$ denote the $\mathbb{Z} _{2} $ graded $A _{\infty} $ category over $\mathbb{Q}$, with objects oriented spin Lagrangian
submanifolds Hamiltonian isotopic to the equator. Our
particular construction of $\operatorname {Fuk} (M, \omega
)$ is presented in Part I. To briefly outline, as part of
some perturbation data $\mathcal{D} $, for each $L, L'  \in \obj \operatorname {Fuk} (M, \omega) 
$ we have Hamiltonian connections $\mathcal{A}  (L,
L')$ on the trivial fibration $[0,1] \times M \to
[0,1]$.
The homomorphism complex $$\operatorname {hom}_ {\operatorname {Fuk} (M, \omega
)} (L, L' )  $$ is the complex $$CF (L, L', \mathcal{A} (L,
L'))$$ defined as in Section 6.1 of Part I. To paraphrase,
the latter 
is the Floer chain complex generated over
$\mathbb{Q}$ by $\mathcal{A} (L,
L' )$-flat sections of $[0,1]
\times M$, with boundary on $L \subset \{0\} \times M,
L'  \subset  \{1\} \times M$.  The homology of $CF (L, L',
\mathcal{A} (L, L'))$ will be denoted by $FH (L, L')$,
understood as a $\mathbb{Z} _{2}$ graded abelian group.

The data $\mathcal{D} $ is generally associated to a Hamiltonian fibration. As a symplectic manifold is a Hamiltonian fibration over a point, we write $\mathcal{D} _{pt}$ for this restricted data, needed for construction of
$\operatorname {Fuk} (S ^{2}  )$ as outlined above.

Denote by $\operatorname {Fuk ^{eq}}  (S ^{2}
) \subset \operatorname {Fuk} (S ^{2}  )$ the full
sub-category obtained by restricting our objects to be
oriented standard equators in $S ^{2} $. We take our perturbation data
$\mathcal{D} _{pt} $ so that the following is satisfied. 
\begin{itemize}
	\item All the connections $\mathcal{A} (L, L') $ as above are $PU
(2)$-connections.  
\item For $L$ intersecting $L'$ transversally, the $PU (2)$ connection $\mathcal{A} (L, L')$ is the trivial flat connection. 
\item For $L=L'$ the corresponding connection is generated by an autonomous Hamiltonian.
\end{itemize}


The associated cohomological Donaldson-Fukaya category $DF  (S ^{2} )$ is equivalent as a linear category over $\mathbb{Q} $ to $FH (L_0, L _{0} )$ (considered as a linear category with one
object) for $L _0 \in \operatorname {Fuk} (S ^{2} ) $.

It is easily verified that a morphism (1-edge) $f$ is an
isomorphism in the nerve $\operatorname {NFuk}(S
^{2} )$, if and only if it corresponds, under the nerve construction $N$, to a morphism in 
$\operatorname {Fuk} (S ^{2} ) $ that induces an isomorphism in $DF (S ^{2})$. Such a morphism will be called a \emph{$c$-isomorphism}.

Consequently the maximal Kan subcomplex $K (S ^{2} )$ of
$\operatorname {NFuk}  (S
^{2} ) $ is characterized as the maximal subcomplex with 1-simplices the images by $N$ of $c$-isomorphisms in $Fuk (S ^{2} )$.

\subsection{Extending $\mathcal{D} _{pt} $ to higher dimensional simplices}
\label{section:model}
\begin{terminology} A bit of possibly non-standard terminology: we say that $A$ is a \emph{model}
for $B$ in some category, with weak equivalences, if there is a morphism
$mod: A \to B$ which is a weak-equivalence.
The map $mod$ will be called a \emph{modelling map}. 
In our
context the modeling map $mod$ always turns out to be a monomorphism.
\end{terminology}
Let us model $D ^{4} _{\bullet}  $ and $S ^{3} _{\bullet} $  as follows. Take the standard representable 3-simplex $\Delta ^{3} _{\bullet}$, and the standard
representable 0-simplex $\Delta ^{0} _{\bullet}  $. Then collapse all faces of $\Delta
^{3}_{\bullet}  $ to a point, that is take the colimit of the following diagram:
\begin{equation}
 \begin{tikzcd}
   & & \Delta ^{0} _{\bullet}     &  \\
  \Delta ^{2} _{\bullet} \ar [drr, "i_0"] \ar [urr] &  \Delta ^{2} _{\bullet} \ar [dr, "i_1"] \ar [ur]   & \Delta ^{2} _{\bullet}
    \ar [d, "i_2"]  \ar [u] & \Delta ^{2}
   _{\bullet} \ar [dl, "i _{3}"]  \ar [ul]   \\
 & &   \Delta ^{3} _{\bullet}   &  \\  
\end{tikzcd}
\end{equation}
Here $i _{j} $ are the inclusion maps of the non-degenerate 2-faces.
This gives a simplicial set $S _{\bullet} ^{3, mod}  $ modeling the simplicial set $S ^{3} _{\bullet} $, in other words there is a natural a weak-equivalence $$S ^{3,mod} _{\bullet} \to S ^{3} _{\bullet}.  $$


Now take the cone on $S _{\bullet} ^{3, mod}  $, denoted by $C (S ^{3,mod}
_{\bullet})$, and collapse the one non-degenerate 1-edge.
The resulting simplicial set $D
^{4,mod}_{\bullet}  $ is our model for $D ^{4} _{\bullet}  $, it may be identified with a subcomplex of $D ^{4} _{\bullet}  $ so that the inclusion map $mod: D
^{4,mod}_{\bullet} \to D ^{4} _{\bullet}   $ induces a weak homotopy equivalence of pairs 
\begin{equation} \label{eq:homotopyequivalence}
(D^{4,mod}_{\bullet}, S ^{3,mod}  _{\bullet} ) \to  (D ^{4} _{\bullet}, S ^{3}  _{\bullet}). 
\end{equation}
We set $b _{0} \in D ^{4} _{\bullet} $ to be the vertex which is the image by $mod$ of the unique 0-vertex in $D ^{4,mod} _{\bullet}   $.

Suppose we have a commutative diagram:
\begin{equation*}
\begin {tikzcd}
    D ^{4}     \ar[r,"h_+"] &  S ^{4}  &  \arrow {l} [above] {h_-} D ^{4} \\
    & S ^{3} \ar [ul, "i"] \arrow {ur} [below] {i} &  
  \end{tikzcd}
\end{equation*}
where $i: S ^{3} \to D ^{4} $ is the natural boundary
inclusion, and s.t. the following is satisfied.
\begin{itemize}
	\item $h _{\pm}:D ^{4} \to S ^{4} $ are smooth, and their
	images cover $S ^{4} $.
	\item $$h _+ ({D} ^{4})  \cap h _- ({D ^{4} })  $$ is contained in the image $E$ of $$h _{\pm} \circ i: S^{3}  \to S ^{4}. $$
	\item $h _{\pm} $ takes $b _{0} $ to $x _{0} $.
\end{itemize}


For example, we may just let $h _{-}$ represent the
generator of $\pi _{4} (S ^{4}, x _{0}) $ and $h _{+} $ to
be the constant map to $x _{0} $. 
We call such a pair $h _{\pm} $ a \emph{complementary pair}.

We set $$D _{\pm}:= h _{\pm} (D ^{4,mod} _{\bullet}) \subset
S ^{4} _{\bullet},   $$ and we set $\Sigma _{\pm} \in S ^{4} _{\bullet} $ to be the image by $h _{\pm} $ of the sole non-degenerate 4-simplex of $D ^{4,mod} _{\bullet}  $. Also set $$\partial D _{\pm} := h _{\pm}(\partial D ^{4,mod} _{\bullet}), $$ where $\partial D ^{4,mod} _{\bullet}$ is the image of the natural inclusion $S ^{3,mod} _{\bullet} \to  D ^{4,mod} _{\bullet}$.

Fix a Hamiltonian frame for the fiber $P_{x_{0}} $ of $P$
over $x _{0} $, in other words a Hamiltonian bundle
diffeomorphism 
\begin{equation*}
\begin{tikzcd}
S ^{2} \ar[r, ""] \ar [d, ""] &  P \ar [d,"p"] \\
pt \ar [r, "x _{0}"]   & S ^{4}.
\end{tikzcd}.
\end{equation*}
In particular, this allows us to identify
$\operatorname {Fuk} (S
^{2}  )$ with $F ^{raw} (x _{0})$, using the
analytic perturbation data
$\mathcal{D} _{pt}$ for both. Denote by $x
_{0,\bullet}$ the image of the map $$\Delta ^{0} _{\bullet}
\to S ^{4,mod} _{\bullet},  $$ induced by the inclusion of
the 0-simplex $x _{0}$. 

We continue with the description of the data $\mathcal{D}
= \mathcal{D} (P) $. This must associate certain data
$\mathcal{D} _{\Sigma}$ for each singular simplex $\Sigma: \Delta^{n}
\to S ^{4}$. Now in Section 8 of Part I, given the data
$\mathcal{D} _{\Sigma}$  for a non-degenerate simplex
$\Sigma $, we assigned extended perturbation data
$\mathcal{D} _{\widetilde{\Sigma}}$  for all
degeneracies $\widetilde{\Sigma} $ of this simplex. 
So by that discussion, our chosen data $\mathcal{D} _{pt} $ induces perturbation data for all degeneracies of $x _{0} $, that is 
for all simplices of $x _{0,\bullet}$, this data will again
be denoted by $\mathcal{D} _{pt} $, for simplicity.

Fix an object 
\begin{equation} \label{eq_L0}
L _{0} \in \operatorname {Fuk ^{eq}} (S ^{2} ) \subset
F ^{raw} (x _{0}).
\end{equation}
Denote by $\gamma \in \hom _{F ^{raw}  (x_0)} (L _{0}, L _{0}) $ the
generator of $FH _{1}  (L_0, L _{0}) $,  i.e. the fundamental chain, so that it corresponds to the identity in $DF  (L _{0}, L _{0}  )$. This $\gamma$ is uniquely determined by our conditions and corresponds to a single geometric section. Denote by $L ^{i}  _{0} $ the image of $L _{0} $ by the embedding $$F ^{raw}  (x
_{0}) \to F ^{raw} (\Sigma _{+}),$$ corresponding to the $i$'th vertex inclusion into
$\Delta ^{4} $, $i=0, \ldots ,4$. 

Let $m _{i} $ be the edge between $i-1,i$ vertices and set $$\overline{m}_{i}:=\Sigma _{+}  \circ m _{i}.$$ Let $\Sigma ^{0} _{i}  $ denote the
0-simplex obtained by restriction of $\Sigma ^{4} $ to the $i$'th vertex.
Note that each $\overline {m} _{i} $ is degenerate by
construction, so we have an induced morphism $$F ^{raw}
(pr): F ^{raw}  (\overline{m} _{i}) \to F ^{raw}  (x _{0} ),$$
for $pr$ the degeneracy morphism in $\Delta (S ^{4})$:  $$pr: \overline{m} _{i}  \to \Sigma ^{0} _{i}.    $$
Finally, for each $L _{0} ^{i-1}, L _{0} ^{i} $ we have
a $c$-isomorphism $$\gamma _{i}: L_{0} ^{i-1} \to L _{0}
^{i}    $$ in $F ^{raw}  (\overline{m} _{i}) \subset
F ^{raw} (\Sigma _{+} ) $, which corresponds to $\gamma$, meaning that the
fully-faithful projection $F ^{raw}  (pr)$ takes
$\gamma _{i} $ to $\gamma$. We will denote by $\gamma _{i,j} $ the analogous
$c$-isomorphisms $L _{0} ^{i} \to L _{0} ^{j}    $.

\begin{notation}
 Let us denote from now on, the morphism spaces  $hom _{F
 ^{raw} (\Sigma  _{\pm})} (L _{0}, L _{1}) $ by $hom
 _{\Sigma  _{\pm}} (L _{0}, L _{1}) $. And
 denote the $A _{\infty}$ composition maps $\mu ^{d} _{F
 ^{raw} (\Sigma _{\pm})}$, in the $A _{\infty }$ category $F
 ^{raw} (\Sigma _{\pm}) $, by $\mu ^{d} _{\Sigma _{\pm}}$. 
\end{notation}
\begin{definition} \label{def:smalldata} We call
perturbation data $\mathcal{D}$ for $P$
\textbf{\emph{unexcited}} if it is extends the data $\mathcal{D} _{pt} $ as above, and if with respect to $\mathcal{D}$ 
 \begin{equation} \label{eq:gammaunit+}
\mu ^{d} _{\Sigma _{+}} (\gamma ^{1}, \ldots,
   \gamma ^{d} )=0, \text{ for $2 <d <4$},  
\end{equation}
   where $(\gamma ^{1}, \ldots, \gamma ^{d}) $ is
	 a composable chain, and each $\gamma ^{k} $ is of the
	 form $\gamma _{i,j} $  as above.
\end{definition}
%
We will see further on how to construct such unexcited data, assume for now that it is constructed.

Let $\{f _{J} \} $,  corresponding to an $n$-simplex, be as
in the definition of the $A _{\infty} $ nerve in Appendix
A.4 Part I, where $J$ is a subset of $[n] = \{0, \ldots, n\}$.

\begin{lemma} \label{lemma:conditionsfJ}
 Let $\mathcal{D}$ be unexcited as above, then there is a 4-simplex $\sigma \in NF ^{raw}(\Sigma _{+})$ with faces determined by the conditions:
\begin{itemize} 
   \item $f _{J} =0$, for $J$ any subset of $[4]$ with at least $3$ elements.
   \item $f _{\{i-1,i\}} = \gamma _{i}$ for $\gamma _{i} $ as before.
\end{itemize}  
\end{lemma}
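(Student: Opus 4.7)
The plan is to construct $\sigma$ directly by producing its face data $\{f_J\}$ and checking the Maurer--Cartan type relations that encode an $n$-simplex of the Faonte--Lurie $A_\infty$-nerve. Recall that such data consists of morphisms $f_J \in \hom_{\Sigma_+}(L^{\min J}_0, L^{\max J}_0)$ of appropriate shifted degree, indexed by $J \subseteq [n]$ with $|J|\geq 2$, satisfying for each $J$ a relation of the schematic form
\[
\sum_{J = J_1 \ast \cdots \ast J_r} \pm\, \mu^r_{\Sigma_+}(f_{J_r}, \ldots, f_{J_1}) \;=\; \sum_{s\ \text{interior}} \pm\, f_{J \setminus \{s\}},
\]
where the left-hand sum is over decompositions of $J$ into consecutive sub-chains overlapping at endpoints, and $s$ runs over non-extremal elements of $J$.

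I would first extend the given data by setting $f_{\{i,j\}} := \mu^2_{\Sigma_+}(\gamma_j, f_{\{i,j-1\}})$ for non-adjacent pairs $i < j$; this agrees with $\gamma_{i,j}$ in the paper's notation. The Maurer--Cartan relation for $|J|=2$ asks that $f_J$ be a cycle, which follows from the $\gamma_i$ being cycles (as fundamental chains of $FH_1(L_0,L_0)$) together with the Leibniz identity for $\mu^2$. For $|J|=3$, say $J=\{i<j<k\}$, setting $f_J=0$ reduces the relation to $f_{\{i,k\}} = \mu^2(f_{\{j,k\}}, f_{\{i,j\}})$; this matches the definition for $j=k-1$, and independence from the choice of interior vertex $j$ is precisely the content of the $A_\infty$-associator identity combined with the unexcited hypothesis $\mu^3_{\Sigma_+}(\gamma,\gamma,\gamma)=0$ applied to the various $\gamma_{i,j}$. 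For $|J|=4$, every partition contributes a factor $f_{J'}$ with $|J'|\geq 3$ to some $\mu^r$ (hence vanishes), except the unique decomposition into three consecutive $2$-subsets; the surviving equation is exactly the unexcited identity $\mu^3_{\Sigma_+}(f_{\{k,l\}}, f_{\{j,k\}}, f_{\{i,j\}})=0$.

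The main obstacle will be the top-level relation for $J=[4]$ of cardinality $5$. By the same cancellation argument, the only non-vanishing terms remaining on the left are $\mu^1_{\Sigma_+}(f_{[4]})$ and the length-four term $\mu^4_{\Sigma_+}(\gamma_4,\gamma_3,\gamma_2,\gamma_1)$, while all terms on the right vanish. There are two natural ways to deal with this. One option is to read the blanket condition ``$f_J=0$ for $|J|\geq 3$'' as governing only the proper faces of $\sigma$, and to choose $f_{[4]}$ as a bounding cochain in $\hom_{\Sigma_+}(L_0^0, L_0^4)$ for $\mu^4_{\Sigma_+}(\gamma_4,\gamma_3,\gamma_2,\gamma_1)$; such a bounding cochain exists because $\gamma$ cohomologically represents the identity, so any iterated higher $\mu^d$ on $\gamma$'s is exact. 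The second option is to refine $\mathcal{D}$ further so that $\gamma$ behaves as a strict unit through length four (using the autonomy of the Hamiltonian underlying $\mathcal{D}_{pt}$, together with the same symmetry arguments producing the unexcited hypothesis at $d=3$); this forces $\mu^4(\gamma,\gamma,\gamma,\gamma)=0$ and allows $f_{[4]}=0$ as written. Either route produces a $4$-simplex $\sigma \in NF^{raw}(\Sigma_+)$ with the prescribed faces and completes the proof.
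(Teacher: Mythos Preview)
Your approach is correct and is essentially a careful unpacking of the paper's one-line proof, which simply cites the unexcited condition~\eqref{eq:gammaunit+} together with $\mu^2_{\Sigma_+}(\gamma,\gamma)=\gamma$. Your case-by-case verification of the nerve relations is exactly what underlies that sentence.

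Your discussion of the top relation for $J=[4]$ is a genuine clarification. As written, the definition of \emph{unexcited} constrains $\mu^d_{\Sigma_+}$ only for $2<d<4$, i.e.\ $d=3$, yet the relation at $J=[4]$ with all $f_J=0$ forces $\mu^4_{\Sigma_+}(\gamma_1,\ldots,\gamma_4)=0$. Two observations resolve this, and they correspond to your two options. First, the lemma only claims the \emph{faces} of $\sigma$ are determined by the listed conditions, and the subsequent use of this lemma (in Lemma~\ref{lemma:corellator}) only invokes $f_J=0$ for \emph{proper} subsets $J\subsetneq[4]$; so your first option, taking $f_{[4]}$ to be a bounding cochain for $\mu^4(\gamma_4,\ldots,\gamma_1)$, is entirely adequate for everything downstream. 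Second, the data actually constructed in Section~\ref{sec:constructionSmallData} satisfies $\mu^4_{\Sigma_+}(\gamma_1,\ldots,\gamma_4)=0$ as well (see the displayed identities following Lemma~\ref{lemma:empty0}), so your second option is what the paper in fact achieves; the range $2<d<4$ in Definition~\ref{def:smalldata} is almost certainly a typo for $2<d\leq 4$.

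One minor point: you do not need to argue exactness of $\mu^4(\gamma,\ldots,\gamma)$ via ``$\gamma$ is a cohomological unit'' in your first option; that reasoning is correct but slightly heavier than necessary here, since the paper's construction already gives the chain-level vanishing.
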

\begin{proof} 
This follows by \eqref{eq:gammaunit+} and by the identity $\mu ^{2} _{\Sigma _{+}  } (\gamma, \gamma) = \gamma $.
   \end{proof}
If we take our unital replacements so that $\gamma$
corresponds to the unit, then $\sigma$ induces (by the
construction) a section of $K (P _{+}) \to D _{+} $, where $K (P _{\pm} )$ will be shorthand for $K (P)$ restricted over ${D _{\pm}}$.

%
Let
\begin{equation*}
   i: \left(K (P _{+})|_{\partial D_ {+}}:= p _{\bullet} ^{-1} (\partial D_ {+}) \right)   \to K (P_{-}),
\end{equation*}
be the natural inclusion map. 
Set $$sec = i
\circ \sigma \circ h _{+}| _{\partial D ^{4,mod} _{\bullet}   }.  $$


\subsection {The main lemma and immediate consequences} \label{section:immediateconsequences}
\begin{lemma} \label{lemma:nontrivialelement} 
Suppose that $P$ is a non-trivial Hamiltonian fibration and
$\mathcal{D}$ is unexcited data for $P$ as above, then $sec$ as
above does not extend to a section of $K (P _{-})$.
Moreover, unexcited data $\mathcal{D} $ exists.
\end{lemma}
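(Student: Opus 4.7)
The plan is to argue by contradiction, reducing the non-extension to a characteristic-class computation from \cite{cite_SavelyevQuantumMaslov}. Suppose $sec$ extended to a section $\tau: D_{-} \to K(P_{-})$. Then, together with $\sigma$ on the $D_{+}$ side, $\tau$ would glue along $\partial D_{+} = \partial D_{-}$ into a global section of the Kan fibration $K(P) \to S^{4}_{\bullet}$ (using the weak equivalence \eqref{eq:homotopyequivalence}), contradicting non-triviality of $K(P)$ via the primary obstruction class in $H^{4}(S^{4}, \pi_{3}(\operatorname{NFuk}(S^{2})))$. So the task becomes showing this obstruction is non-zero for non-trivial $P$.

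To exhibit the obstruction concretely, observe that extending $sec$ to a 4-simplex $\tau \in NF^{raw}(\Sigma_{-})$ whose faces are forced by the prescribed boundary amounts, by definition of the Faonte--Lurie $A_{\infty}$-nerve, to solving a Maurer--Cartan-type equation whose top-level obstruction is the class of $\mu^{4}_{\Sigma_{-}}(\gamma_{1},\gamma_{2},\gamma_{3},\gamma_{4})$ modulo a $\mu^{1}$-boundary in $\hom_{\Sigma_{-}}(L_{0}^{0}, L_{0}^{4})$. Geometrically this is a count of holomorphic sections over the 4-simplex $\Sigma_{-}$ of the Hamiltonian fibration associated via the natural system $\mathcal{U}$. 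I would then identify this count with the quantum Maslov evaluation on the clutching class of $P$ in $\pi_{3}\operatorname{Ham}(S^{2})$, via the computation of \cite{cite_SavelyevQuantumMaslov}; for $P$ non-trivial that evaluation is non-zero, yielding the contradiction.

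For the existence of unexcited data, the plan is to construct $\mathcal{D}$ inductively on skeleta of $\Delta(S^{4})$, starting from $\mathcal{D}_{pt}$ and propagating through the natural system $\mathcal{U}$. I would take $h_{+}$ to be the constant map at $x_{0}$, as allowed by the complementary pair setup, so that $\Sigma_{+}$ is a fully degenerate simplex and the moduli problem defining $\mu^{3}_{\Sigma_{+}}(\gamma,\gamma,\gamma)$ factors through moduli controlled by $\mathcal{D}_{pt}$ at $x_{0}$. Since $\gamma$ represents the cohomological identity in $DF(S^{2})$, the class $\mu^{3}(\gamma,\gamma,\gamma)$ is $\mu^{1}$-exact, and one can absorb this exact term on the chain level by a standard homological-perturbation step analogous to the unital replacement used in Part I.

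The hard part, common to both halves of the lemma, will be coherent compatibility of the data: a single choice of $\mathcal{D}$ must simultaneously realize $\mu^{3}_{\Sigma_{+}} = 0$ on the $D_{+}$ side while preserving the non-trivial Gromov--Witten content on the $D_{-}$ side encoded in $\mu^{4}_{\Sigma_{-}}$, and it must agree with $\mathcal{D}_{pt}$ along $x_{0,\bullet}$ and match across the face identifications along $\partial D_{\pm}$. This forces the inductive construction to proceed stratum-by-stratum in $\Delta(S^{4})$, invoking the axiomatized system $\mathcal{U}$ from Section \ref{sec:constructionSmallData} to keep the chosen Hamiltonian connections compatible with the simplicial structure at every step.
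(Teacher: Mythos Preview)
Your approach to the first half (non-extension) is essentially the paper's: reduce to showing that $[\mu^{4}_{\Sigma_{-}}(\gamma_{1},\ldots,\gamma_{4})]$ is non-exact, and identify this with a quantum Maslov class via \cite{cite_SavelyevQuantumMaslov}. The paper does this directly through Lemma~\ref{lemma:corellator} and the restructuring in Sections~\ref{sec_Ar}--\ref{sec_restructuring}; your opening detour through a global section of $K(P)\to S^{4}_{\bullet}$ and a primary obstruction class is not needed and is mildly circular in spirit, since non-triviality of $K(P)$ (Theorem~\ref{thm:noSection}) is deduced \emph{from} this lemma rather than the other way around. That said, your second paragraph lands on the right mechanism.

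There is a genuine gap in your plan for the existence of unexcited data. You propose to take $\Sigma_{+}$ degenerate and then kill $\mu^{3}_{\Sigma_{+}}(\gamma,\gamma,\gamma)$ by a ``homological-perturbation step analogous to the unital replacement.'' But unexcited is a condition on the \emph{geometric} data $\mathcal{D}$: the operations $\mu^{d}_{\Sigma_{+}}$ are literal counts of holomorphic sections in $F^{raw}$, not algebraic outputs of a transferred $A_{\infty}$ structure. An algebraic perturbation produces a new $A_{\infty}$ category, not a new choice of Hamiltonian connections/almost complex structures realizing it; there is no mechanism in your sketch that turns ``$\mu^{3}$ is exact'' into ``some admissible $\mathcal{D}$ makes the moduli count zero.'' The paper's argument is entirely different and geometric: one imposes the energy bound \eqref{eqAreaAd} on the connections $\mathcal{A}^{+}_{r}$, and then Lemma~\ref{lemma:empty0} uses the Maslov index formula together with monotonicity of $(S^{2},L_{0})$ to show that the relevant expected-dimension-$0$ moduli spaces are \emph{empty}, so that \eqref{eq:mu:identities2}--\eqref{eq:mu:identities1} and $\mu^{4}_{\Sigma_{+}}=0$ hold on the nose. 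This energy/Maslov mechanism is the missing idea in your plan, and it is also what makes the two halves cohere: the same smallness condition that forces $\mu^{d}_{\Sigma_{+}}=0$ on the $D_{+}$ side leaves the curvature (and hence the holomorphic content computing $\mu^{4}_{\Sigma_{-}}$) concentrated on the $D_{-}$ side, which is exactly what the subsequent computation exploits.
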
 
This lemma involves all the ingredients of our theory,  its
proof that will be broken up in parts, and will follow shortly.
\begin{proof} [Proof of Theorem \ref{thm:firstquantumobstruction}]
Clearly $\qobs (P) \geq 4$, since the $3$-skeleton of $S ^{4} $ is trivial. 
By Lemma \ref{lemma:nontrivialelement} above, $K (P)$ does
not have a section over the $4$-skeleton. 
\end{proof}
\begin{remark}
 When $P$ is obtained by clutching with a generator of $\pi _{3} (PU (2)) $, and when $h _{\pm} $ are embeddings, the class $[sec]$ in $\pi _{3} (K (P _{-} )) \simeq \pi _{3} (K (S ^{2} )) $ can be thought of as ``quantum'' analogue of the class of the classical Hopf map.
\end{remark}
\begin{proof}[Proof of Theorem \ref{thm:noSection}]
It might be helpful to first review Appendix
\ref{appendix:Kan} before reading the following.
If we take any unexcited perturbation data $\mathcal{D}$ for $P$, then the first part follows immediately by Lemma \ref{lemma:nontrivialelement}.
So $K (P)$ is non-trivial as a Kan fibration.

This then implies that $\operatorname {Fuk} _{\infty} (P)$ is non-trivial as
a Cartesian fibration. 
To see this, suppose otherwise that we have 
a Cartesian fibration
$$\widetilde{\mathcal{P} } \to S ^{4}
_{\bullet}  \times I _{\bullet},  $$
restricting to $\operatorname {Fuk} _{\infty} (P) $ over $S
^{4} _{\bullet}  \times 0 _{\bullet}  $ and to
$\operatorname {NFuk} (S ^{2} ) \times S ^{4} _{\bullet}   $
over the other end $S ^{4} _{\bullet}  \times 1
_{\bullet}  $. 
Here $0 _{\bullet} $,
respectively $1
_{\bullet} $ are notation for the  images of $i _{j, \bullet }: \Delta ^{0} _{\bullet}
\to \Delta^{1}  _{\bullet }$, $j=0,1$, where $i
_{j, \bullet }$ are induced by the pair of
boundary point inclusions.  

Now take the maximal Kan sub-fibration of $\widetilde{\mathcal{P} } $, then by Lemma \ref{lemma:kanfib} we obtain a trivialization of $K (P)$ which is a contradiction. 
\end{proof}

\section {Towards the proof of Lemma \ref{lemma:nontrivialelement}} \label{section:partI}

We will denote by $L _{0, \bullet} $ the image of the map $\Delta ^{0} _{\bullet} \to K (P _{-}),$ induced by the inclusion of $L _{0}$ into $K (S ^{2} )$ as a 0-simplex.  Suppose that $sec$ extends to a section of $K (P _{-})$, so we have map $$e: D ^{4,mod} _{\bullet} \to K (P_{-}) $$ extending $sec$ over $\partial D ^{4,mod} _{\bullet}  $. 
We may assume WLOG that $e$ lies over $h_{-}$, meaning $$p
_{\bullet} \circ e = h _{-},$$ since it can be homotoped to
have this property. To see the latter, first take a relative homotopy of 
\begin{equation*}
   p _{\bullet} \circ e:  (D ^{4,mod} _{\bullet}, \partial D ^{4,mod} _{\bullet})   \to (D _{-}, \partial D _{-})
\end{equation*}
to $h _{-} $, using that we have a homotopy equivalence of pairs \eqref{eq:homotopyequivalence},
and then lift the homotopy to a relative homotopy upstairs using the defining lifting property of Kan fibrations.

And so we have a 4-simplex $$T = e (\Sigma ^{4}) \in K (P
_{-} ) $$ projecting to $\Sigma _{-} \in D _{-} $ by $p
_{\bullet} $. Since $T$ is in the image of $e$, all but one
3-faces of $T$ are totally degenerate with image in $L _{0,
\bullet } $. The exceptional 3-face is the sole
non-degenerate 3-face of $sec$, (that is of $sec (\partial D ^{4,mod} _{\bullet})$).

Let $m _{i,j}, \gamma _{i,j} $ be as in the previous
section, but corresponding now to $\Sigma _{-} $ rather than
$\Sigma _{+} $. Then by the boundary condition on $e$, the edges
of $T$ (which are all edges of $sec$)  correspond, under the
nerve construction, to the
generators $\gamma _{i,j}$. As this is the condition for the
edges of $sec$.

\begin{lemma} \label{lemma:corellator}
For $\mathcal{D}$ unexcited, and for the unital replacement $F$ of $F ^{raw} $ as above, the simplex $T$ exists if and only if
$\mu ^{4} _{\Sigma _{-}} (\gamma _{1},  \ldots,  \gamma _{4} )$
 is exact. 
\end{lemma}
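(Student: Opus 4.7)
The plan is to unwind the definition of a 4-simplex in the Faonte--Lurie $A_\infty$-nerve $NF(\Sigma_-)$ and reduce the existence of $T$ to solving a single Maurer--Cartan-type equation in $hom_{\Sigma_-}(L_0,L_0)$.

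A 4-simplex of $NF(\Sigma_-)$ is a tuple $\{f_J\}_{J\subseteq[4]}$ where $f_{\{i\}}$ is an object and, for $|J|\geq 2$, $f_J\in hom(f_{\{\min J\}},f_{\{\max J\}})$ is a morphism of appropriate degree, subject to the standard $A_\infty$-nerve relations. The boundary of $T$ is completely prescribed: all five vertices are $L_0$; each edge $f_{\{i,j\}}$ equals $\gamma_{i,j}$, which in the unital replacement $F$ plays the role of the strict identity; and every $f_J$ with $J\subsetneq[4]$ and $|J|\geq 3$ must vanish. The last assertion uses that four of the five 3-faces of $T$ are the totally degenerate 3-simplex at $L_0$ (so their higher $f_J$ vanish by strict unitality), while the remaining 3-face is the image under $i$ of the sole non-degenerate 3-face of $\sigma$, whose higher $f_J$ vanish by construction of $\sigma$ in Lemma \ref{lemma:conditionsfJ}. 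Hence the only undetermined datum of $T$ is $f_{[4]}\in hom_{\Sigma_-}(L_0,L_0)$.

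Now I write the nerve relation at $J=[4]$. The face-boundary side is a signed sum of the $f_J$ with $|J|=4$ and vanishes identically by the previous step. The remaining sum runs over ordered partitions of $\{0,1,2,3,4\}$ into $r$ consecutive overlapping blocks $J_1,\ldots,J_r$ of size $\geq 2$, contributing $\pm\,\mu^r(f_{J_r},\ldots,f_{J_1})$. A direct count shows that for $2\leq r\leq 3$ every such partition contains at least one block $J_s\subsetneq[4]$ of size $\geq 3$, for which $f_{J_s}=0$ by the previous paragraph, and the corresponding term vanishes. What remain are the trivial one-block partition, giving $\mu^1(f_{[4]})=df_{[4]}$, and the unique refinement $\{0,1\}\{1,2\}\{2,3\}\{3,4\}$ with $r=4$, giving $\mu^4_{\Sigma_-}(\gamma_1,\gamma_2,\gamma_3,\gamma_4)$. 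The whole relation therefore collapses to
\[
df_{[4]} \;=\; \pm\,\mu^4_{\Sigma_-}(\gamma_1,\gamma_2,\gamma_3,\gamma_4),
\]
so $T$ exists if and only if this equation admits a solution, i.e.\ if and only if $\mu^4_{\Sigma_-}(\gamma_1,\ldots,\gamma_4)$ is exact. Any resulting $T$ automatically lies in the maximal Kan sub-fibration $K(P_-)$, since all its 1-faces are units and in particular $c$-isomorphisms.

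The main technical obstacle is the bookkeeping between the geometric, raw operations $\mu^d_{\Sigma_-}$ of $F^{raw}(\Sigma_-)$ and the unitalised operations in $F(\Sigma_-)$: one must confirm that the obstruction element produced by the MC equation inside $F$ agrees, modulo a coboundary, with the geometrically defined $\mu^4_{\Sigma_-}(\gamma_1,\ldots,\gamma_4)$ computed in $F^{raw}$, so that ``exact'' has the same content on both sides. Controlling the compatibility of the identifications $\gamma\leftrightarrow\text{unit}$ across the fully faithful embeddings $F^{raw}(\overline{m}_i)\hookrightarrow F^{raw}(\Sigma_-)$, and pinning down signs in the Faonte--Lurie nerve, are secondary points of care.
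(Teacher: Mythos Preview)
Your argument is correct and follows essentially the same approach as the paper's proof: both reduce the nerve relation for $f_{[4]}$ to the single equation $\mu^1 f_{[4]} = \pm\,\mu^4_{\Sigma_-}(\gamma_1,\ldots,\gamma_4)$ by using the boundary conditions on $T$ and Lemma~\ref{lemma:conditionsfJ} to kill all other terms. The paper disposes of signs by working over $\mathbb{F}_2$, whereas you carry a $\pm$; and the paper absorbs your ``technical obstacle'' about the unital replacement into a reference to Part~I, but the logical content is the same.
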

\begin{proof} 
 The following argument will be over
   $\mathbb{F}_{2} $ as opposed to $\mathbb{Q}$ as the signs will not matter. Recall that we take the unital replacement so that $\gamma \in hom_ {F^{raw} (P _{x _{0}})} (L _{0}, L _{0})$ corresponds to the unit in the unital replacement.

Now if $T \in K (P _{-} )$ as above exists, then it
corresponds under unital replacement (see Remark 7.5 in Part
I) to a $4$-simplex $T' \in NF ^{raw} (\Sigma _{-})$
satisfying the following condition on its $4$-face.
Recalling the nerve construction, the morphism $f _{[4]} \in hom _{\Sigma  _{-}} (L_{0} ^{0},
L _{0} ^{4} ) $, figuring in the definition of the $4$-face,
satisfies:
\begin{equation} \label{eq:simplex}
\mu ^{1} _{\Sigma  _{-}} f _{[4]} = \sum _{1 < i < 4} f _{[4] - i} + \sum _{s} \sum _{(J _{1},
   \ldots J _{s} ) \in decomp
_{s} } \mu ^{s} _{\Sigma  _{-} }  (f_{J _{1}}, \ldots , f _{J _{s} }  ).
\end{equation}
By our conditions on the boundary of $T$, by the condition on the unital replacement, and by the conditions in Lemma \ref{lemma:conditionsfJ}, 
we must have $f_{J} =0 $, for every proper subset $J \subset [4]$, in some length $s$ decomposition of $[4]$, unless $J=\{i,j\}$ in which case  $f _{i,j}=\gamma _{i,j} $.
Given this \eqref{eq:simplex} holds if and only if $\mu ^{4} _{\Sigma _{-}}
(\gamma _{1} , \ldots , \gamma _{4} ) $ is exact. 

\end{proof}
We are going to show that for some unexcited $\mathcal{D}$,   $ \mu ^{4} _{\Sigma _{-}} (\gamma _{1} ,  \ldots,  \gamma
_{4} )$  does not vanish in homology, which will finish
the proof of the Lemma \ref{lemma:nontrivialelement}. 
\section{Review of Hamiltonian structures}
\label{sec_reviewHam}
We review here the notion of a Hamiltonian
structure. A complete formal definition of this is contained
in ~\cite[Definition
\ref{def:HamiltonianStructure}]{cite_SavelyevQuantumMaslov}.
A \emph{Hamiltonian structure}  $$\{\widetilde{{S} } _{r}, {S} _{r},
\mathcal{L} _{r}, \mathcal{A} _{r} \}
_{\mathcal{K} }, 
$$
consists of the following data.
\begin{itemize}
\item A compact smooth manifold $\mathcal{K} $ possibly with
boundary and corners (in the latter case $\mathcal{K} $ is
meant to be a polyhedron so that there is no need for any
deep theory of manifolds with corners).
	\item A suitably smooth $r \in \mathcal{K} $ family of Hamiltonian
fibrations $M \hookrightarrow \widetilde{S}  _{r} \to S _{r} $, where $S _{r}$
are Riemann surfaces with boundary and with ends, s.t. each
$S _{r}$ is diffeomorphic to a closed disk with some number
of punctures on the boundary. The data of
a ($r$-dependent) holomorphic
diffeomorphism at each $i$'th end, $$e _{i}:  [0,1] \times (0,
  \infty) \to S _{r},$$ $i \neq 0$, (called positive ends). At the 0'th (called negative) end
	we ask for a holomorphic diffeomorphism $$e _{0}: [0,1] \times (-\infty, 0) \to S _{r}.
  $$  
These charts are called {\emph{strip end charts}}. 
\item The Hamiltonian fibrations $\widetilde{S}  _{r}$ are endowed with
Hamiltonian connections $\mathcal{A} _{r}$, which preserve
a fiberwise Lagrangian subfibration $\mathcal{L} _{r}$ of
$\widetilde{S} _{r}$ over $\partial S _{r}$. That is we have
a fibration $\mathcal{L} _{r} \to \partial S _{r}$, which is
embedded in $\widetilde{S} _{r}$, with fibers embedded as
Lagrangian submanifolds of the fibers of $\widetilde{S}
_{r}$.
\item Trivializations $$\widetilde{e} _{i}: [0,1] \times (0, \infty) 
\times M \to \widetilde{S} _{r}, $$ over $e _{i}$, 
called \emph{end bundle charts}  for $\widetilde{S} _{r}$.
\item In the end bundle charts, the Hamiltonian connections $\mathcal{A} _{r}$ are
flat and translation invariant. Specifically, they have the form $\overline{\mathcal{A}} _{i}$,
where $\overline {\mathcal{A}} _{i}   $ denotes the
$\mathbb{R}$-translation invariant extension of
some Hamiltonian connection $\mathcal{A} _{i} $ on $[0,1]
\times M$ to $(0, \infty) \times [0,1]
\times M$, (for the case of a positive end).
\item A certain choice of a smooth family almost complex
structures $j _{z}$ on the fibers $M _{z}$ of $\widetilde{S}
_{r}$ (also $r$ dependent). However, the latter will be
implicit in the sense that we do not need to manipulate
them, so we do not elaborate here, instead we refer the reader to ~\cite[Definition
\ref{def:HamiltonianStructure}]{cite_SavelyevQuantumMaslov}.
 
\end{itemize}
For $\mathcal{A} _{i}$ as above,  we say that $\mathcal{A}
_{r}$ are \emph{compatible}  with $\{\mathcal{A}
_{i}\}$. 
Let us write $(\widetilde{S}, \mathcal{L} )$ for a pair as
above.

\subsection{The Hamiltonian structures associated to the data
$\mathcal{D}$} \label{sec_The Hamiltonian structures associated to the data
}
As in Section 4 of Part I, $\Pi (\Delta^{n} )$ will denote
a certain fundamental groupoid of $\Delta^{n} $. More
specifically, $\Pi (\Delta ^{n}) $ is the small groupoid, whose
objects set $obj$ is the set of vertices
of $\Delta ^{n}$. The morphisms set $hom$ is
the set of affine maps $m: [0,1] \to \Delta ^{n}$, 
(possibly constant) sending end points to the
vertices. The source map $$s: hom \to obj$$ is defined by
$s (m) = m (0)  $ and the target map $$t: hom \to obj$$ is
defined by $t(m) = m (1)$.

Let $(m ^{1}, \ldots, m ^{d}) $  be a composable chain 
of morphisms in $\Pi (\Delta ^{n}) $, which we recall means
that the target of $m ^{i-1} $ is the source of $m ^{i}
$ for each $i$. 
The perturbation data $\mathcal{D} $, in
particular specifies for each $n$ and for each such
composable chain, certain maps $$ 
u (m ^{1}, \ldots, m ^{d},n): \mathcal{E} _{d} ^{\circ}   \to \Delta ^{n}. 
$$
Here $\mathcal{E} _{d}  $ is the universal curve over
$\overline{\mathcal{R}} _{d}$, and $\mathcal{E} _{d}
^{\circ}  $ denotes $\mathcal{E} _{d}  $ with nodal points
of the fibers removed. The collection of these maps,
satisfying certain axioms, is denoted  by $\mathcal{U}$. We have already mentioned this in the introduction.

The restriction of $u (m ^{1}, \ldots, m ^{d},n)$ to the fiber
$\mathcal{S} _{r} $ of $\mathcal{E} _{r} ^{\circ}  $ over
$r \in \overline{\mathcal{R}} _{d} $, is denoted by $u (m
^{1}, \ldots, m ^{d},n, r)$ which may also be abbreviated by
$u _{r} $.

Let $M \hookrightarrow P \to X$ be a Hamiltonian fibration
with compact monotone fibers.
Let $\Sigma: \Delta ^{n} \to X$ be smooth, and 
denote $\overline {m} ^{i} := \Sigma \circ m ^{i}  $.
Suppose further we are given $u (m ^{1}, \ldots, m ^{d},n)$ and a
chain of Lagrangian branes (objects of the monotone Fukaya
category $Fuk (M, \omega )$) $L' _{0}, \ldots, L' _{d}  $ with
$L' _{i} \subset P| _{\overline{m} _{i} (1)} $, $i \geq
1$, $L' _{0} \subset P | _{\overline{m} _{1} (0)} $.
Then the data $\mathcal{D} $ associates to this a Hamiltonian structure:

\begin{equation} \label{eq_structureMainExample}
\{\widetilde{\mathcal{S} } _{r}, \mathcal{S} _{r},
\mathcal{L} _{r}, \mathcal{A} _{r} \}
_{\overline{\mathcal{R}} _{d}}, 
\end{equation}
where: 
\begin{itemize}
\item $\widetilde{\mathcal{S}} _{r}:= (\Sigma \circ u _{r}) ^{*}P$.
\item $\widetilde{\mathcal{S}} _{r}$ is naturally trivial
over the boundary components of $\mathcal{S} _{r}$.
Likewise $\mathcal{L}  _{r}$ is trivial over each $i$'th component of ${\partial \mathcal{S} _{r}}$, with fiber $L' _{i}$. Here the subscripts $i$
correspond to the labels in the Figure
\ref{fig:sidenumberedsurface}. 
\item $\mathcal{A} _{r}$ are compatible with $\{\mathcal{A}
_{i} = \mathcal{A} (L'
_{i-1}, L' _{i})\} $, with the later part of the data of the
construction of the Fukaya category as in the preamble of
Section \ref{sec_qualitativedescription}.
\end{itemize}
\begin{figure}[h]
  \includegraphics[width=2in]{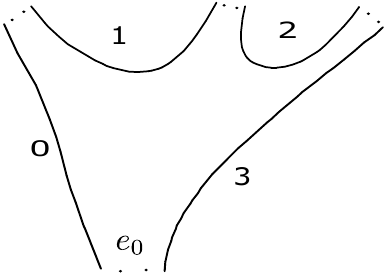}
 \caption {} \label{fig:sidenumberedsurface}
\end{figure}

\section {Construction of unexcited data}
\label{sec:constructionSmallData}

Now, specialize the discussion of the section above to the case $n=4$,  $\Sigma = \Sigma
_{+} $, $\forall i: L' _{i} = L _{0}  $, with $L _{0}$ the
distinguished equator as in \eqref{eq_L0}. In this case, we
write $\mathcal{A} _{r} ^{+} $ for the corresponding
connections, ($m _{1}, \ldots, m _{d}$ will be implicit).
 
Suppose that $\mathcal{D}$ extends $\mathcal{D} _{pt} $ from
before. If $\mathcal{L} _{r} \subset \widetilde{\mathcal{S}}
_{r}| _{\partial \mathcal{S} _{r} }  $ denotes the trivial
Lagrangian sub-bundle with fiber $L _{0} $, then we obtain
a Hamiltonian structure $$\Theta ^{+}= \{\Theta ^{+} _{r}
\}= \{\widetilde{\mathcal{S}} _{r}, \mathcal{S} _{r},
\mathcal{L} _{r}, \mathcal{A} ^{+}   _{r}    \}
_{\overline{\mathcal{R}} _{d}}. $$ 

To recall, as part of the requirements, at each
end $e _{i} $ of $\mathcal{S} _{r} $, $\mathcal{A} ^{+} _{r}
$ is compatible with the connection $\mathcal{A} _{i}
=  {\mathcal{A}} (L _{0}, L _{0})$.   

Set $$\hbar:= \frac{1}{2} \Vol (S ^{2},\omega). $$
Let $\kappa$ denote the $L ^{\pm} $-length of the holonomy
path in $\operatorname {Ham} (S ^{2}, \omega) $ of $\mathcal{A} _{0}= \mathcal{A} (L _{0}, L _{0})$. We may suppose that  
\begin{equation} \label{eqAreaAd}
\forall r: \energy (\mathcal{A} ^{+}  _{r}  ) <  \hbar- 5 \kappa,
\end{equation}
is satisfied after taking $\kappa$ to be sufficiently small.
(There is no obstruction since the corresponding bundles are
naturally trivializeable, continuously in $r$.) 

Fix a complex structure $j _{0} $ on $M$,
and let $\{J _{r} = J (\mathcal{A} ^{+} _{r})  \}$ be the
corresponding, induced family of almost complex structures on
$\{\widetilde{\mathcal{S}} _{r}  \}$ as defined in
~\cite[Section \ref{sec:ModuliSpacesHamStructures}]{cite_SavelyevQuantumMaslov}.

Let $\overline{{\mathcal{M}}} (\Theta ^{+} 
,A)$ be as in \cite[Section \ref{sec_Family
version}]{cite_SavelyevQuantumMaslov}. To paraphrase, this
is the set of pairs $(u,r)$ for $u$ a relative homology class $A$, $J
_{r}$-holomorphic, finite Floer energy section of
$\widetilde{\mathcal{S} } _{r}$, with boundary on
$\mathcal{L} _{r}$. The class $A$, again to paraphrase,  is
a relative class represented by an asymptotically
$\mathcal{A} _{r}$-flat section of $\widetilde{S} _{r}$,
with boundary on $\mathcal{L} _{r}$.

As in Part I,  let $$ 
   \overline {\mathcal{M}}=\overline{{\mathcal{M}}} (\gamma ^{1}, \ldots, \gamma ^{d}; \gamma ^{0}, \Sigma _{+},
\{J_{r}   \},
   A), 
$$ denote the set of elements of $\overline{{\mathcal{M}}} (\Theta ^{+} 
,A)$ with asymptotic constraints $\gamma ^{i}$ at each $e
_{i} $ end. Here each $\gamma ^{k}   $, $k \neq 0$, is of
the form $\gamma _{i,j} $ where this is as in Section \ref{section:model}.
\begin{lemma}
\label{lemma:empty0} 
Whenever the class $A$ is such that ${\mathcal{M}}$ has virtual dimension $0$, and $d$ satisfies $2 < d \leq 4$, $\overline {\mathcal{M}}$ is empty.
\end{lemma}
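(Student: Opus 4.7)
The plan is to combine monotonicity of $(S^2, \omega)$ with a virtual dimension count; the energy bound \eqref{eqAreaAd} plays the auxiliary role of ensuring Gromov compactness and a clean notion of the relative class $A$.

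First, I would write down the virtual dimension formula for $\mathcal{M}(A)$. Since each asymptotic $\gamma^k$ is the identity-type generator of $FH(L_0, L_0)$, all lying in the same $\mathbb{Z}_2$-degree, the boundary degree contributions cancel and the virtual dimension reduces to
$$\operatorname{vdim}\mathcal{M}(A) = (d-2) + \mu(A),$$
where $(d-2) = \dim \overline{\mathcal{R}}_d$ and $\mu(A)$ is the Maslov-type index of the relative class $A$ in the family $\widetilde{\mathcal{S}}_r$. Setting $\operatorname{vdim}\mathcal{M}(A) = 0$ forces $\mu(A) = 2-d$.

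Next, I would invoke monotonicity. The equator $L_0 \subset S^2$ is orientable with minimal Maslov number $2$, so every disk class with boundary on $L_0$ has Maslov index in $2\mathbb{Z}_{\geq 0}$, and sphere bubbles contribute multiples of $2c_1(TS^2) = 4$. Since Maslov indices are additive over components of a stable map, any class $A$ admitting a $J_r$-holomorphic representative, smooth or broken, must satisfy $\mu(A) \in 2\mathbb{Z}_{\geq 0}$. For $d = 3$ the required value $\mu(A) = -1$ is odd; for $d = 4$ the required value $\mu(A) = -2$ is negative. Neither is achievable, so $\overline{\mathcal{M}}$ is empty.

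The energy bound $\energy(\mathcal{A}_r^+) < \hbar - 5\kappa$, together with the $\kappa$-bound on the action of each $\gamma_{i,j}$ coming from the $L^{\pm}$-length of its holonomy path, implies $E(u) < \hbar$ for every $(u,r) \in \overline{\mathcal{M}}$. This is what I would use to secure Gromov compactness of $\overline{\mathcal{M}}$ and thus the validity of the monotonicity reasoning across all limiting configurations, ruling out the possibility that sphere or disk bubbling could shift Maslov indices in a way that appears to evade the parity/sign obstruction. The main subtlety to verify is the precise form of the virtual dimension formula and the Maslov conventions in the $\mathbb{Z}_2$-graded monotone setting of Part I and \cite{cite_SavelyevQuantumMaslov}; once these are matched, the emptiness claim is immediate from the index arithmetic above.
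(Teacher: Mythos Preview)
Your argument has a genuine gap in the step where you assert that any class $A$ admitting a $J_r$-holomorphic representative must satisfy $\mu(A)\in 2\mathbb{Z}_{\ge 0}$. That assertion is correct for honest holomorphic disks in $(S^{2},L_{0})$, where positivity of area together with monotonicity forces $\mu\ge 0$. But the curves in $\overline{\mathcal{M}}$ are $J_r$-holomorphic \emph{sections} of the Hamiltonian fibration $\widetilde{\mathcal{S}}_r\to\mathcal{S}_r$, with $J_r=J(\mathcal{A}_r^{+})$ built from a non-flat connection. For such sections the Floer energy is not equal to the vertical symplectic area: the two differ by a term controlled by the curvature of $\mathcal{A}_r^{+}$. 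Consequently a $J_r$-holomorphic section can perfectly well have negative vertical Maslov number, and your positivity step fails. The parity observation does dispose of $d=3$, but for $d=4$ the value $\mu(A)=-2$ is not ruled out by your reasoning.

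This is exactly where the bound \eqref{eqAreaAd} enters in the paper, and not merely as a compactness device. After capping off to obtain $\Theta^{/}$ and $A^{/}$, the case $Maslov^{vert}(A^{/})=-2$ gives, via monotonicity, $-C\cdot Maslov^{vert}(A^{/})=\hbar$. One then invokes \cite[Lemma 2.34]{cite_SavelyevQuantumMaslov}, which converts this into an energy lower bound on $\mathcal{A}_r^{+}$; the hypothesis $\energy(\mathcal{A}_r^{+})<\hbar-5\kappa$ violates that bound, and this is what forces $\overline{\mathcal{M}}=\emptyset$. The paper also distinguishes the two possibilities for the output $\gamma^{0}$ (the fundamental chain $\gamma$ versus its Poincar\'e dual), which shift the dimension formula by one; you should track these separately rather than declaring that all degree contributions cancel. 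In short: keep your index computation, drop the incorrect non-negativity claim, and use the energy bound \eqref{eqAreaAd} together with the cited energy estimate as the actual obstruction in the $\mu=-2$ case.
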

\begin{proof}
   Let $$\Theta ^{/} := (\Theta ^{+}) ^{/},   $$ and $A ^{/}
	 $ be the capping off construction as 
	 in ~\cite[Section 2.7]{cite_SavelyevQuantumMaslov}. For a fixed $r$,
	 by the Riemann-Roch,
	 ~[Appendix A]\cite{cite_SavelyevQuantumMaslov},  we get that the expected dimension of ${\mathcal{M}} (\Theta ^{/}, A ^{/}) $ 
is 
\begin{equation*}  1+ Maslov ^{vert}  (A ^{/} ).
\end{equation*} 
Consequently, when $\gamma ^{0} =\gamma $, the expected dimension of $\mathcal{M}$ is:
\begin{equation} \label{eqDimension} 1+ Maslov ^{vert}  (A ^{/})  - 1   + (\dim
   \mathcal{R}_{d} = d-2).
\end{equation} 
We need the expected dimension of $\mathcal{M}$ to be 0, and $d \geq 3$, so $Maslov ^{vert} (A ^{/} ) \leq -1$. But $Maslov ^{vert} (A ^{/} )
   = -1$ is impossible as the minimal positive Maslov number
	 is 2.   

Now, note that if $Maslov ^{vert} (A ^{/} ) = -2$ then $$-C \cdot Maslov ^{vert} (A ^{/} )
= \hbar, $$ for $C$ the monotonicity constant of $(S ^{2},
\omega ) $ and the equator $L _{0}$.  Consequently, the
result follows by ~\cite[Lemma
2.34]{cite_SavelyevQuantumMaslov} and by the property \eqref{eqAreaAd}.

When $\gamma ^{0}  $ is the Poincare dual to $\gamma $, we would get $Maslov ^{vert} (A ^{/} )
   \leq -2$  so for the same reason the conclusion follows. 
\end{proof}

So if we choose our data $\mathcal{D}$ so that the hypothesis of the lemma above are satisfied,
then with respect to this $\mathcal{D}$:
\begin{align} 
   \label{eq:mu:identities2} \mu ^{2} _{\Sigma ^{4} _{\pm}    }
(\gamma _{i,j} , \gamma _{j,k} ) &  = \gamma _{i,k}  \\
   \label{eq:mu:identities1}
   \mu ^{3} _{\Sigma ^{4} _{\pm} }  (\gamma ^{1}, \ldots, \gamma ^{3}) & = 0,    \text{ for $\gamma ^{i} $ as above} \\
\mu ^{4} _{\Sigma ^{4} _{+}}  (\gamma _{1}, \ldots, \gamma _{4}) & = 0.
\end{align}
In particular such $\mathcal{D} $ is unexcited, and
$\mathcal{D} $ from now on will denote such a choice. 

\section {The product $\mu ^{4} _{\Sigma ^{4}
_{-}} (\gamma _{1} , \ldots , \gamma _{4} ) $ and
the quantum Maslov classes} \label {section:productAndHigherSeidel}

The product 
\begin{equation}\label{eq_productmu}
  \mu ^{4} _{\Sigma ^{4} _{-}} (\gamma _{1}
, \ldots , \gamma _{4} ),
\end{equation}
a priori depends on various
choices, like the choices of $h _{\pm} $, and then choice of
data $\mathcal{D} $.   
For the purpose of computation we will take 
$h _{+} $ to be the constant map to $x _{0} $ and $$h _{-}:
(D ^{4}, \partial D ^{4}) \to (S ^{4}, x _{0})    $$ to be the complementary map, that is representing the generator of $\pi _{4} (S ^{4}, x _{0}) \simeq \mathbb{Z}$. We further suppose that $h _{-} $   is an embedding in the interior of $D ^{4} $.

We are then going to reduce the computation of the product
\eqref{eq_productmu} to the computation of a certain quantum
Maslov class, already done in
~\cite{cite_SavelyevQuantumMaslov}. This will require
a number of geometric steps.

Let $\Sigma _{-} $ be the 4-simplex of $S ^{4} _{\bullet}
$ corresponding to $h _{-}$  as before in Section \ref{section:model}. We need to study the moduli spaces
\begin{equation} \label{eqModuli}
\overline{\mathcal{\mathcal{M}}} ( \gamma _{1}, \ldots,  \gamma _{4}; \gamma
   ^{0}, \Sigma_{-}, \{\mathcal{A}_{r} \}, A),
\end{equation}
where $\mathcal{A} _{r}$ now denotes the connections on
\begin{equation} 
 \widetilde{\mathcal{S}}   _{r}:= (\Sigma _{-}  \circ u (m _{1}, \ldots,  m _{4}, 4,r)) ^{*}P \to \mathcal{S} _{r},
\end{equation}
part of the data $\mathcal{D} $. We abbreviate $u (m _{1}, \ldots,  m _{4}, 4,r)$ by $u _{r} $ in what follows.

By the dimension formula \eqref{eqDimension}, since we need the expected
dimension of \eqref{eqModuli} to be zero, the class $A ^{/} $ satisfies:
\begin{equation*}
Maslov ^{vert} (A ^{/} ) = -2,
\end{equation*}
and we must have $\gamma ^{0} = \gamma _{0,4}.  $

\begin{notation}
 From now on, by slight abuse, $A _{0} $ refers to various section classes of various Hamiltonian structures such that the associated class $A _{0} ^{/}  $ satisfies:
   \begin{equation*}
   Maslov ^{vert} (A _{0} ^{/}  ) = -2.
   \end{equation*}
\end{notation}
\subsection {Constructing suitable $\{\mathcal{A} _{r} \} $}
\label{sec_Ar}
To get a handle on \eqref{eqModuli} we will construct
unexcited data $\mathcal{D} _{0} $ with additional
geometric properties. 

A Hamiltonian $S ^{2} $ fibration over $S ^{4} $ is classified by an element $$[g] \in \pi _{3} (\operatorname {Ham} (S ^{2}, \omega), id) \simeq \pi _{3} (PU (2),id) \simeq \mathbb{Z}.$$ Such an element determines a fibration $P _{g} $
over $S ^{4} $ via the clutching construction:
\begin{equation*}
P _{g} =   D ^{4} _{-} \times S ^{2}    \sqcup  D ^{4} _{+} \times  S ^{2} \sim,   
\end{equation*}
with $D ^{4}_- $, $D ^{4}_+ $ being 2 different names for the standard closed 4-ball $D
^{4} $, and where the equivalence relation $\sim$ is $(d, x) \sim \widetilde{g}
(d,x)$, $$ \widetilde{g}: \partial D ^{4} _{-}
\times S ^{2} \to \partial D ^{4} _{+}  \times S
^{2},  \quad   \widetilde{g} (d,x) = (d, g (d)
^{-1}  (x)).$$  We suppose that the that previously appeared
point $x _{0} = h _{\pm} (b _{0}) $, is in $D ^{4} _{+} \cap
D ^{4} _{-} \subset S ^{4} $.

From now on $P _{g} $ will denote such a
fibration for a non-trivial class $[g]$. Note that the fiber of $P _{g} $ over the base point $x _{0}  \in S ^{3} \subset D ^{4} _{\pm} $ (chosen for definition of the homotopy group $\pi _{3} (\operatorname {Ham} (S ^{2}, \omega), id) $)  has a distinguished, by the construction, identification with $S ^{2} $. 
Take $\mathcal{A}$ to be a connection on $P \simeq P _{g} $ which is trivial in the distinguished trivialization over $D ^{4} _{+}  $. 
This gives connections $$\mathcal{A}' _{r}:= (\widetilde{u}
_{r}) ^{*} \mathcal{A} $$ on $\widetilde{\mathcal{S}} _{r}$,
where $$\widetilde{u}_{r}  = \Sigma _{-} \circ u _{r}. $$

By the last axiom for the system $\mathcal{U}$ introduced in
Part I, we may choose $\{u _{r} \} $ so that the family
$\{\widetilde{u} _{r} (\mathcal{S} _{r}) \}$ induces
a singular foliation of $S ^{4} $ with the properties:
\begin{itemize}
	\item  The foliation is smooth outside $x _{0} $. Note
	that $x _{0} $ is the image by $\widetilde{u} _{r}  $ of the
	ends (images of $e _{i} $), and the image of the boundary of each $\mathcal{S} _{r} $.
	\item Each $\widetilde{u} _{r} $ is an embedding on the complement of $\widetilde{u} _{r} ^{-1} (x _{0} ) $. 
\end{itemize}
Denote by $E$ the subset $S ^{3} \subset S ^{4} $ bounding $D _{\pm} ^{4}  $.  
We may in addition suppose that each $\widetilde{u} _{r} $ intersects $E$ transversally, again on the complement of $\widetilde{u} _{r} ^{-1} (x _{0} ) $.

By the above, the preimage by $\widetilde{u}  _{r} $ of $E$  contains a smoothly embedded curve $c _{r} $ as in Figure \ref{figure:plusminus}, 
and $\widetilde{u} _{r} $ takes $c _{r} $ into $E$. 
This $c _{r} $ not uniquely determined, but we may fix
a family $r \mapsto c _{r} $, with parametrizations $$c
_{r}: \mathbb{R} \to \mathcal{S} _{r},  $$ with the
properties:
\begin{itemize}
	\item $c _{r}$ maps $(-\infty,0)  $ diffeomorphically onto
	$e _{0} (\{0\} \times (-\infty, 0))$.
	\item $c _{r}$ maps $(1,\infty)  $ diffeomorphically onto
	$e _{0}(\{1\} \times (-\infty, 0))$.
	\item  $\{c _{r} \} $ is a $C ^{0} $ continuous family in
	$r$.
\end{itemize}  
 We set:
\begin{equation*}
\widetilde{c}_{r}:= \widetilde{u} _{r} \circ c _{r}.
\end{equation*}
In Figure \ref{figure:plusminus}, the regions $R _{\pm} $ are the preimages by $\widetilde{u}  _{r} $ of $D^{4} _{\pm} \subset S ^{4} $, and $c _{r} $ bounds $R _{-} $. 
\begin{figure} [h]
 \includegraphics[width=2in]{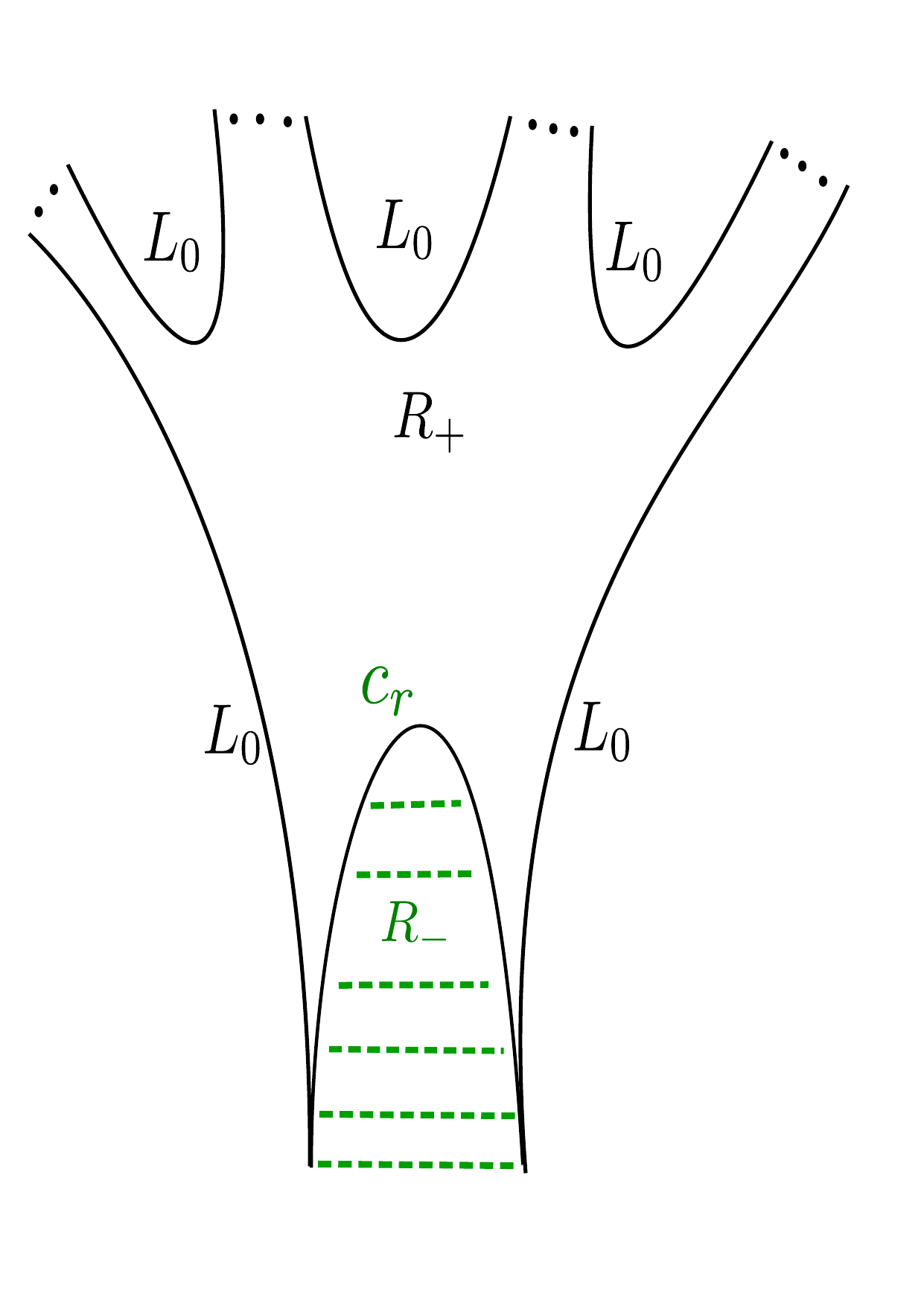}
   \caption {The labels $L _{0} $ indicate that the Lagrangian subbundle is constant with corresponding fiber $L _{0} $. The curve $c _{r} $ bounds $R _{-}$.}
\label{figure:plusminus}
\end{figure} 
It follows that  $\{\widetilde{c}  _{r} \}$ likewise induces a singular foliation of the equator $E \simeq S ^{3} $ that is smooth outside $x _{0} $.

So each $\mathcal{A}' _{r} $ is flat in the region $R _{+} $, in fact is trivial in the distinguished trivialization of $\widetilde{\mathcal{S}} _{r}  $ over $R _{+} $, corresponding to the distinguished trivialization 
of $P$ over $D ^{4} _{+}  $. Likewise, we have a distinguished trivialization of $\widetilde{\mathcal{S}} _{r}  $ over $R _{-} $,
corresponding to the distinguished trivialization of $P$ over $D^{4}_{-}$. 
In this latter trivialization let $$\phi _{r}: \mathbb{R} \to \operatorname {Ham} (S ^{2}, \omega) $$ be the holonomy path of $\mathcal{A} _{r}' $ over $c _{r} $.

Let $Lag (S ^{2})$ denote the space of 
Lagrangian equators, that is oriented Lagrangian
submanifolds Hamiltonian isotopic to the standard
oriented equator. Then by construction, $$\phi _{r}| _{(-\infty,0] \sqcup [1, \infty)}  = id, $$ 
so that we may define $$f ({r}) \in \Omega _{L _{0}}  Lag (S
^{2}))$$
by
\begin{equation} \label{eq_actionHam}
f ({r}) (t)= \phi _{r} (t) \cdot L _{0}, \quad t \in [0,1]
\end{equation}
where the right hand side means apply an element of
$\operatorname {Ham} (S ^{2}, \omega) $ to $L _{0} $ to get
a new Lagrangian. 
\begin{terminology} \label{term_generated}
We will say that $f (r)$ is
\textbf{\emph{generated by $\mathcal{A}' _{r} $}}.
\end{terminology}
 Note that
each $f (r)$ is an exact loop by construction, where exact
is the standard notion, as for example in
~\cite[Section
\ref{sec_ExactPaths}]{cite_SavelyevQuantumMaslov}.

Also by construction 
\begin{equation} \label{eq:phi}
\phi _{r} (t) =  g(\widetilde{c}  _{r}(t)),  \quad t \in [0,1],
\end{equation}
if we identify $\widetilde{c} _{r}(t) $ with an element of $S ^{3} $.

Let $D ^{2}_{0} \subset \overline{\mathcal{R}}_{4}$ be an
embedded closed disk, not intersecting the boundary
$\partial \overline{\mathcal{R}}_{4}$, so that $\partial
D ^{2} _{0}  $ is in the gluing normal neighborhood $N$ of
$\partial \overline{\mathcal{R}}_{4}  $, as defined in Part I.

So we get a continuous map $$f: D ^{2} _{0} \to \Omega _{L
_{0}} Eq (S ^{2}), $$ with $Eq (S ^{2}) \simeq S ^{2}$ denoting the space
of standard oriented equators in $S ^{2}$.
And $f(\partial D ^{2} _{0})= p_{L _{0}} $, with the right-hand side denoting the constant loop at $L _{0} $.
Then by construction, and \eqref{eq:phi} in particular, $f \simeq lag$, where $\simeq$ is a homotopy equivalence, and where
\begin{equation} \label{eq:lagdef}
lag: S ^{2} \to \Omega _{L _{0} } Lag (S ^{2})
\end{equation}
is the composition 
$$S ^{2} \xrightarrow{g'} \Omega _{id} PU (2) \to \Omega
_{L_{0} }Eq  (S ^{2}), $$ for $g'$ naturally induced by $g$,
and for the second map naturally induced by the map $$PU (2)
\to Eq (S ^{2} ), \quad \phi \mapsto \phi (L _{0}).$$
%

We then deform each $\mathcal{A}'_{r} $ to a connection
$\mathcal{A} _{r} $, which is as follows. In the region $R
_{+} $ $\mathcal{A} _{r} $ is still flat, but at each end $e
_{i} $, $\mathcal{A} _{r} $ is compatible with $\mathcal{A}
(L _{0}, L _{0})$, and so that $\mathcal{A} _{r} $ is still trivial over the boundary of $\mathcal{S} _{r} $.


Since $\widetilde{\mathcal{S}} _{r}  $ and $\mathcal{A}' _{r} $ are trivial  for $r \in \overline{\mathcal{R}} _{d} - D ^{2} _{0} $, with trivialization induced by the trivialization of $P _{+} $, and since the condition \eqref{eqAreaAd} holds, we may ensure that  
\begin{equation} \label{eq:hbarbound2}
   \energy (\mathcal{A} _{r}) < \hbar -5\kappa, 
\end{equation}
for $r$ in the complement of $D ^{2} _{0}  $. 
This give a Hamiltonian structure we denote as $$\mathcal{H} :=
\{\mathcal{H} _{r}:= (\widetilde{\mathcal{S}} _{r},
\mathcal{S} _{r}, \mathcal{L} _{r}, \mathcal{A} _{r}) \}.$$
By the paragraph after Lemma \ref{lemma:empty0}, this
extends to some unexcited data $\mathcal{D} _{0} $.
\subsection{Restructuring $\mathcal{H} $} \label{sec_restructuring}

The data $\mathcal{D}  _{0}$ is now fixed, but it remains
compute $$\mu ^{4} _{\Sigma ^{4} _{-}} (\gamma _{1}
, \ldots , \gamma _{4} ),$$ with respect to this data. To
this end we are going to restructure $\mathcal{H} $. 

For convenience we recall here some notions from
~\cite{cite_SavelyevQuantumMaslov}. 
\begin{definition}  \label{definition:concordance} 
Given a pair $\{\Theta _{r} ^{i}  \}  = \{\widetilde{{S}} _{r} ^{0} , {S}_{r} ^{0},
      \mathcal{L}_{r} ^{0}, \mathcal{A} _{r} ^{0}\}
			_{\mathcal{K}} $, of Hamiltonian structures
we say that they are \textbf{\emph{concordant}} if the
following holds. 
There is a Hamiltonian structure $$\mathcal{T} =  \{\widetilde{{T}}  _{r},
{T}_{r} , \mathcal{L}'_{r}, \mathcal{A}' _{r}
   \} _{\mathcal{K}
\times [0,1]},$$ with
an oriented diffeomorphism (in the natural sense, preserving all structure)
$$   \{\widetilde{{S}} _{r} ^{0} , {S}_{r} ^{0} ,
\mathcal{L}_{r} ^{0}, \mathcal{A} _{r} ^{0} 
\} _{ \mathcal{K} ^{op} _{0}}     \sqcup \{\widetilde{{S}} _{r} ^{1}, {S}_{r} ^{1} , \mathcal{L}_{r} ^{1}, \mathcal{A} _{r} ^{1}
\} _{ \mathcal{K} _{1}} 
\to  \{\widetilde{{T}} _{r},
{T} _{r}, \mathcal{L}'_{r}, \mathcal{A}' _{r
} \} _{\mathcal{K} \times \partial [0,1]}.
 ,$$ 
where $op$ denotes the opposite orientation.
\end{definition}

\begin{definition}
We say that a Hamiltonian structure $\{\Theta _{r} \}$ is $A$-\textbf{\emph{admissible}} if 
there are no elements $$(\sigma, r) \in \overline{\mathcal{M}} (\{\Theta _{r} \},  A),$$  for $r$ in a neighborhood of the boundary of $\mathcal{K}$. 

\end{definition}
\begin{definition}  \label{definition:isotopy} 
Given an $A$-admissible pair $\{\Theta _{r} ^{i}  \} $, $i=1,2$, of Hamiltonian structures, we say that they are $A$-\emph {\textbf{admissibly concordant}} if there
is a Hamiltonian structure $$\{\mathcal{T}
_{r}\} = \{\widetilde{{T}}  _{r},
{T}_{r} , \mathcal{L}'_{r}, \mathcal{A}' _{r}
\} _{\mathcal{K} \times [0,1]}, $$  which furnishes
a concordance, and s.t. there are no elements $(\sigma,
r) \in \overline{\mathcal{M}} (\{\Theta _{r} \},  A),$
for $r \in \partial \mathcal{K} \times [0,1] $.
\end{definition}

Applying ~\cite[Lemma \ref{lemma:gluinglowerbound}] {cite_SavelyevQuantumMaslov}, and
using \eqref{eq:hbarbound2} we get that $\mathcal{H}$ is $A _{0} $-admissible.  We now further mold this data for the purposes of computation.

First cap off the ends $e _{i} $, $i \neq 0$, of
each $\mathcal{H} _{r}$ as in the paragraph preceding
~\cite[Lemma \ref{lemma:gluinglowerbound}]{cite_SavelyevQuantumMaslov}. 
This gives a Hamiltonian structure $$\mathcal{H} ^{\wedge}:=
\{ \widetilde{S} _{r} ^{\wedge},  {S} _{r} ^{\wedge},
\mathcal{L} _{r} ^{\wedge}, \mathcal{A} _{r}^{\wedge} \}
_{\mathcal{K} = D ^{2} _{0}} ,$$ satisfying 
\begin{equation}\label{eq_areahbar2}
\energy
(\mathcal{A} _{r} ^{\wedge}  ) + \kappa < \hbar, 
\end{equation}
for each
$r$. Again by ~\cite[Lemma
\ref{lemma:gluinglowerbound}]{cite_SavelyevQuantumMaslov} $\mathcal{H} ^{\wedge}$ is $A _{0} $-admissible. 
%

Let
$$ev (\mathcal{H} ^{\wedge}, A _{0}) \in CF (L _{0}, L _{0}, \mathcal{A} (L _{0},
L _{0}))$$ be as in
~\cite[Lemma
\ref{def_EvaluationTotal}]{cite_SavelyevQuantumMaslov}.

Using the PSS maps as described in
~\cite[Section \ref{sec_Gluing Hamiltonian structures with
estimates}]{cite_SavelyevQuantumMaslov}, corresponding to the caps at the
ends (needed for the construction $\mathcal{H}  ^{\wedge}$),
and using standard gluing,  it readily follows that 
\begin{equation} \label{eq:mu=H^}
[\mu ^{4}_{\Sigma ^{4} _{-}} (\gamma _{1} , \ldots , \gamma _{4}
)] = [ev (\mathcal{H} ^{\wedge}, A _{0} )] \in FH (L _{0},
L _{0}).
\end{equation}

It remains to compute the right-hand side, to this end we
further restructure the data. 

Let $p _{1}: [0,1] \to Lag (S ^{2}) $ be the path generated
by $\mathcal{A} (L _{0}, L _{0})$, with $p _{1} $ starting
at $L _{0} $, and where generated is as in the paragraph
following \eqref{eq_actionHam}. 
Suppose we have defined $p _{i-1} $. Set $L _{i-1}:=p _{i-1} (1)  $ and define $p _{i} $ to be the path in $Lag (S ^{2})$ starting at $L _{i-1} $, generated by $\mathcal{A}(L _{0}, L _{0})    $.
Now set $p _{0} :=p _{1} \cdot \ldots \cdot {p _{d}}  $, where $\cdot$ is path concatenation in diagrammatic order. 
We may assume that $L _{0}$ is transverse to $L _{4} = p _{0} (1)$ by adjusting the connection $\mathcal{A} (L _{0}, L _{0})$ if necessary. 

We construct a concordance of $\mathcal{H} ^{\wedge}$ to another
Hamiltonian structure:  $$\mathcal{H} ^{\mathfrak
{n}}:=\{\widetilde{S} _{r} ^{\wedge}, {S}_{r}^{\wedge},
\mathcal{L}_{r} ^{\mathfrak {n}}, \mathcal{A}
_{r}^{\mathfrak {n}}\},$$  whose properties are 
illustrated in Figure \ref{diagram:deformation}. 

\begin{figure} 
 \centering 
\includegraphics[width=2in]{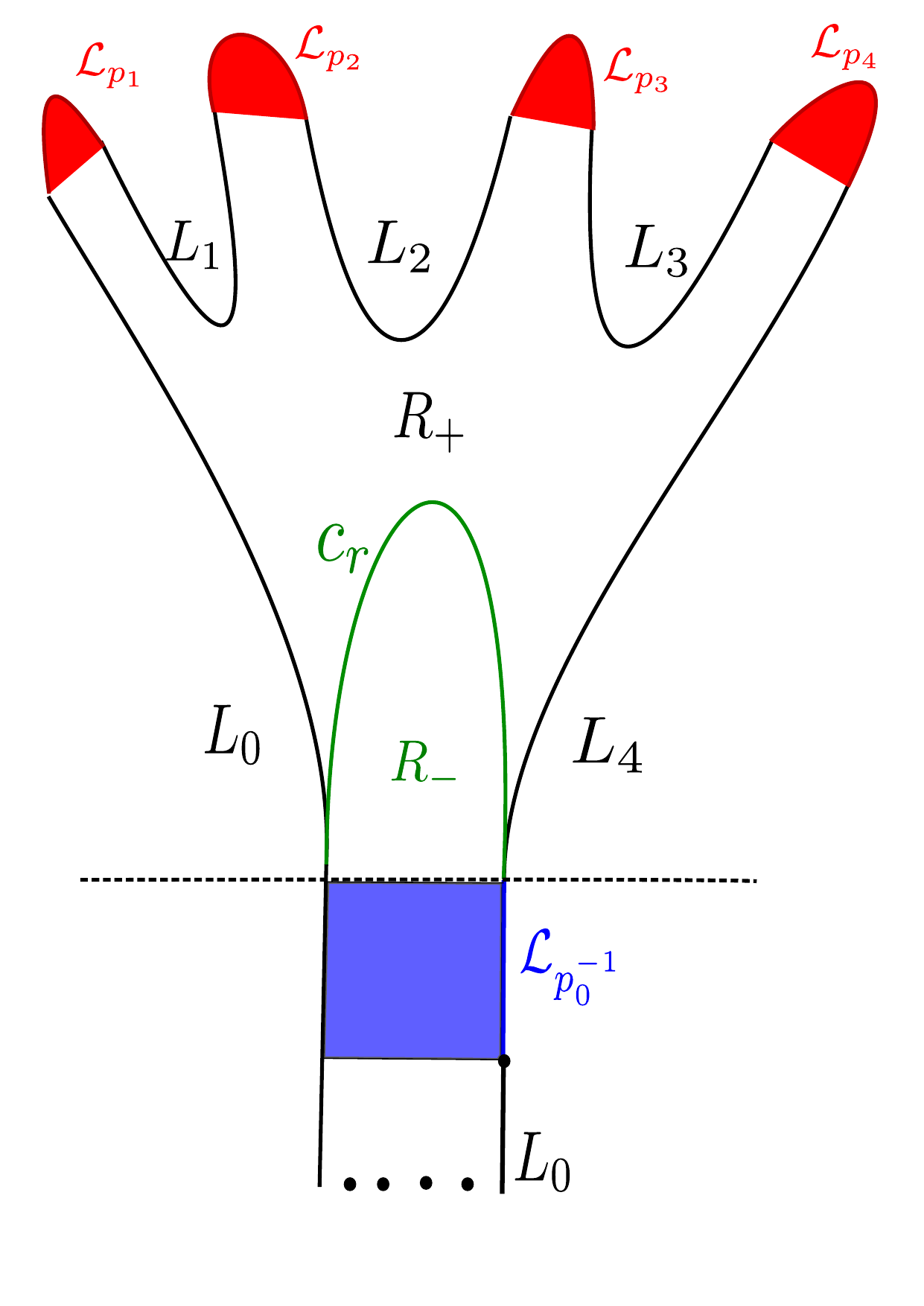}
   \caption {Over the boundary components with black labels
	 $L _{i} $ the  Lagrangian subbundle $\mathcal{L} _{r}
	 ^{\mathfrak {n}} $ is constant with corresponding fiber
	 $L _{i} $. Over the $i$'th red boundary component the
	 Lagrangian subbundle corresponds to the path of
	 Lagrangians $p _{i} $. Likewise over the
	 right boundary component of the blue region, the Lagrangian subbundle corresponds to the path of Lagrangians $p _{0} ^{-1}
	 $. In the red striped regions we have removed the
	 curvature of the connection, in the blue striped region we have added it.}
\label{diagram:deformation}
\end{figure} 
The Hamiltonian connection $\mathcal{A} _{r} ^{\mathfrak {n}}  $ satisfies the following conditions, (referring  to the Figure
\ref{diagram:deformation}):
\begin{itemize}
	\item $\mathcal{A} _{r} ^{\mathfrak {n}} $ is flat in the entire region $R _{+} $ (which includes the red shaded finger regions).
	\item The blue region is contained in the strip end chart
	at the $e _{0} $ end, which is down in the figure.
	\item Along top boundary segment of the blue region,
	contained in the dashed line,  $\mathcal{A} _{r}
	^{\mathfrak {n}}  $ is the trivial connection in the
	coordinates of corresponding end bundle chart.
	\item At the $e _{0} $ end, in the corresponding end
	bundle chart coordinates, the connection $\mathcal{A} _{r}
	^{\mathfrak {n}}  $  coincides with $\mathcal{A}
	_{r}^{\wedge}$ over $[0,1] \times (-\infty, s)$, for
	some $s<0$.
\end{itemize}
The connections $\mathcal{A} _{r} ^{\mathfrak {n}} $ are
forced by the above conditions to gain curvature in some region,
and we limit this region to the blue region of Figure
\ref{diagram:deformation}. This works more specifically as
follows.


We choose a concordance $\mathcal{T} $ from $\mathcal{H} ^{\wedge} $ to $\mathcal{H} ^{\mathfrak
{n}} $ such that for the associated family of connections
$\{\mathcal{A} _{r,t} \}$, $t \in [0,1] $, the following is
satisfied:
\begin{enumerate}
	\item $$\mathcal{A} _{r,0}= \mathcal{A} _{r} ^{\wedge},
	\quad \mathcal{A} _{r,1}= \mathcal{A} _{r} ^{\mathfrak {n}}.  $$
	\item 
	\begin{equation*} \forall r \, \forall v \in T _{z} \mathcal{S} _{r}: \frac{d}{dt} |R _{\mathcal{A} _{r,t}} (v, jv)| _{+} <0, 
\end{equation*}
for each $z \in \mathcal{S} _{r} $, except for $z$ in the
region which is blue shaded in Figure \ref{diagram:deformation}. 
	\item   
$$\forall t: |\energy (\mathcal{A} _{r,t} )-\energy (\mathcal{A}
_{r} ^{\wedge})| = 0. $$ \label{cond_boundbysumpi}
\end{enumerate}  
The last condition can be satisfied,  since the gain of
$\energy$ in the blue region is exactly equal to the loss of
$\energy$ in the red regions.

It follows by the conditions above, and the use of
~\cite[Lemma
\ref{lemma:gluinglowerbound}]{cite_SavelyevQuantumMaslov}
that $\mathcal{T} $ is an $A _{0}$-admissible concordance.

By ~\cite[Lemma \ref{lemma:gluing}]{cite_SavelyevQuantumMaslov} we have:
\begin{equation*}
   [ev (\mathcal{H} ^{\wedge}, A _{0} )] =
   [ev(\mathcal{H} ^{\mathfrak {n}}, A _{0} )].
\end{equation*}
This finishes our restructuring.
\subsection {Computing $[ev(\mathcal{H} ^{\mathfrak {n}}, A _{0} )]$}
If we stretch the neck along the  
dashed line in Figure \ref{diagram:deformation}, the upper
half of the resulting building gives us a new Hamiltonian
structure:
$$\mathcal{H} ^{0} = \{\widetilde{S} _{r} ^{0}, {S}_{r}^{0}, \mathcal{L}_{r} ^{0}, \mathcal{A} _{r}^{0}\}.$$

Let $$[ev _{\mathfrak {n}}] :=[ev(\mathcal{H} ^{\mathfrak
{n}}, A _{0} )] \in FH (L _{0}, L _{0}),$$  and let 
$$ [ev _{0} ] := [ev(\mathcal{H} ^{0}, A _{0}]  \in FH (L
_{0}, L _{4}).
$$
By standard Floer theory, and specifically the
theory of continuation maps
in Floer homology we clearly have that:
\begin{equation*}
0 \neq [ev _{\mathfrak {n}}] \iff 0 \neq [ev _{0} ].
\end{equation*}

\subsubsection{Getting a cycle of Lagrangian paths}
We may suppose that the holonomy path of $\mathcal{A} ^{0}
_{r} $ over $c _{r}$, in the distinguished trivialization over $R _{+} $,
generates $p _{0} $. Since $\mathcal{A} ^{0}$ is flat over
$R _{+}$, the latter can be insured simply by adjusting the parametrizations $\{c _{r}\}$.

Let $\mathcal{P}({L _{0}, L _{4}}) $ denote the space
of smooth exact paths in $Lag (S ^{2})$ from $L _{0} $ to $L
_{4} $. Let $$f': D _{0} ^{2} \to  \mathcal{P}({L _{0},
L _{4}}), $$ be defined as: 
\begin{equation} \label{eq:f'}
f' (r) (t)= g(\widetilde{c} _{r}(t)) \cdot p _{0}  (t),
\end{equation}
Here the right-hand side of \eqref{eq:f'} means the action
as in \eqref{eq_actionHam}: apply an element of $\operatorname {Ham} (S ^{2}, \omega) $ to a Lagrangian to get a new Lagrangian.
In this case $$f'(\partial D _{0} ^{2}) =   p_0 \in
\mathcal{P}({L _{0}, L _{4}}).$$
In particular, $f'$ represents a class 
\begin{equation}\label{eq_a}
  a \in \pi _{2} (\mathcal{P}({L _{0}, L _{4}}), p _{0}) 
\end{equation}
In what follows, we omit specifying the parameter space $D ^{2} _{0}  $ for $r$, since it will be the same everywhere.

Let $\mathcal{O}$ denote the Riemann surface with one end,
diffeomorphic to the closed disk with a single puncture on
the boundary. 
Let  $\mathcal{L} _{p}$ be the Lagrangian sub-fibration of
$\mathcal{O}  \times S ^{2}$, over the boundary induced by
$p$, and defined as follows.

as formally defined in
~\cite[Definition
\ref{defInducedLoop}]{cite_SavelyevQuantumMaslov}. F

\begin{lemma} \label{lemma:concordant} The $A _{0} $-admissible  Hamiltonian structure $\mathcal{H} ^{0} = \{\widetilde{\mathcal{S}}_{r}^{0}, \mathcal{S} _{r} ^{0},  \mathcal{L}
_{r}^{0},
\mathcal{A} ^{0}_{r}    \} $ is $A _{0} $-admissibly concordant to 
$$ \Theta' = \{
   \mathcal{O} \times S ^{2}, \mathcal{O}, \mathcal{L} _{f'
	 (r)}, \mathcal{B} _{r} 
\},$$ for certain Hamiltonian connections $\{\mathcal{B}
_{r} \}$ made explicit in the proof.
\end{lemma}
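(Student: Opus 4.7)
The plan is to construct the concordance in three stages: identify the underlying Riemann surfaces, read off the boundary Lagrangian path from $\mathcal{H}^0$, and then interpolate energy-controlled connections while preserving $A_0$-admissibility.

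\emph{Stage one (identification).} The surface $\mathcal{S}_r^0$ is a disk with a single strip end, hence diffeomorphic to $\mathcal{O}$. I would choose a smooth family of diffeomorphisms $\varphi_r\colon \mathcal{S}_r^0 \to \mathcal{O}$ matching strip-end charts, and transport $(\widetilde{\mathcal{S}}_r^0, \mathcal{L}_r^0, \mathcal{A}_r^0)$ to $\mathcal{O} \times S^2$ via a trivialization that extends the distinguished trivialization coming from $R_+$ and is compatible with the $e_0$ end bundle chart. Since $\mathcal{O}$ is contractible such a global trivialization exists. In it, the boundary Lagrangian sub-bundle is encoded by a single path $q_r\colon [0,1] \to Lag(S^2)$ from $L_0$ to $L_4$ obtained by concatenating the constant $L_i$ segments with the $p_i$ arcs, twisted by the holonomy of $\mathcal{A}_r^0$ along the ``outer'' boundary passing through $R_+$.

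\emph{Stage two (matching $q_r$ with $f'(r)$).} Using flatness of $\mathcal{A}_r^0$ on $R_+$ together with \eqref{eq:phi}, the outer holonomy on $\widetilde{c}_r$ is exactly $g \circ \widetilde{c}_r$. After collapsing the trivial $L_i$ plateau segments by a linear reparametrization of $[0,1]$, this yields $q_r(t) = g(\widetilde{c}_r(t)) \cdot p_0(t) = f'(r)(t)$. The reparametrization furnishes a canonical homotopy $H_{r,s}$, $s \in [0,1]$, from $q_r$ to $f'(r)$ rel endpoints, and hence a homotopy of Lagrangian sub-bundles $\mathcal{L}_{H_{r,s}}$ of $\mathcal{O} \times S^2$ over $\partial \mathcal{O}$.

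\emph{Stage three (concordance and admissibility).} Define $\mathcal{B}_r$ to be any Hamiltonian connection on $\mathcal{O} \times S^2$ which is compatible with $\mathcal{A}(L_0, L_0)$ at the $e_0$ end, preserves $\mathcal{L}_{f'(r)}$, and satisfies $\energy(\mathcal{B}_r) < \hbar - \kappa$; such $\mathcal{B}_r$ exists by a partition-of-unity construction since the holonomy around $\partial \mathcal{O}$ is prescribed and matches the one extracted in Stage one. Build the concordance
\begin{equation*}
\mathcal{T} = \{\mathcal{O} \times S^2,\ \mathcal{O},\ \mathcal{L}_{H_{r,s}},\ \mathcal{A}_{r,s}\}_{D_0^2 \times [0,1]},
\end{equation*}
with $\mathcal{A}_{r,0} = \varphi_{r,*} \mathcal{A}_r^0$ and $\mathcal{A}_{r,1} = \mathcal{B}_r$, carried out in the style of Section \ref{sec_restructuring} so that curvature is only added in a small controlled region and $\energy(\mathcal{A}_{r,s}) + \kappa < \hbar$ is preserved throughout by \eqref{eq_areahbar2}. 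The $A_0$-admissibility of $\mathcal{T}$ on $\partial D_0^2 \times [0,1]$ then follows from \cite[Lemma \ref{lemma:gluinglowerbound}]{cite_SavelyevQuantumMaslov}, since outside $D_0^2$ the bundle $P$ is trivial in the distinguished trivialization and the energy stays below the monotonicity threshold.

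The hard part will be Stage two: tracking the boundary parametrization precisely enough so that the right-side holonomy assembles into $g(\widetilde{c}_r(t)) \cdot p_0(t)$ without a spurious conjugation, and then arranging the connections in Stage three so that this identification is realized by a concordance whose curvature budget still fits inside \eqref{eq_areahbar2}. The red ``finger'' excisions in Figure \ref{diagram:deformation} were engineered exactly for this accounting, and I expect their careful re-use to drive most of the remaining technical work.
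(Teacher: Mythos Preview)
Your approach is in the right spirit but takes a harder route than the paper, and the extra work you flag in Stage~3 is precisely what the paper's argument sidesteps.

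The paper does \emph{not} first identify $\mathcal{S}_r^{0}$ with $\mathcal{O}$ and then interpolate connections. Instead it fixes a smooth family of deformation retractions $ret_{r}\colon \mathcal{S}_r^{0}\times I\to \mathcal{S}_r^{0}$ onto the subregion $R_{-}$, sets $\mathcal{S}_r^{t}=\image ret_{r,t}$, and simply \emph{restricts} the existing data: $\mathcal{A}_r^{t}:=i_{r,t}^{*}\mathcal{A}_r^{0}$ for the inclusion $i_{r,t}\colon \mathcal{S}_r^{t}\hookrightarrow \mathcal{S}_r^{0}$. Because $\mathcal{A}_r^{0}$ is flat on $R_{+}$, the Lagrangian sub-bundle $\mathcal{L}_r^{t}$ over $\partial\mathcal{S}_r^{t}$ is uniquely determined by parallel transport once you pin the fibers $L_{0},L_{4}$ at the $e_{0}$ end; no reparametrization homotopy $H_{r,s}$ is needed. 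At $t=1$ one lands on $R_{-}$, which is identified with $\mathcal{O}$ by the smooth Riemann mapping theorem, and $\mathcal{B}_r$ is \emph{defined} to be $\mathcal{A}_r^{1}=\mathcal{A}_r^{0}|_{R_{-}}$ under this identification. The $A_{0}$-admissibility then comes for free from \cite[Lemma~\ref{lemma:gluinglowerbound}]{cite_SavelyevQuantumMaslov}, since restriction to a subdomain can only decrease $\energy$.

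What this buys over your outline: there is no ``curvature budget'' to track, no need to manufacture a new connection $\mathcal{B}_r$ with a prescribed energy bound, and no Stage~3 interpolation at all. Your worry about a spurious conjugation in Stage~2 also evaporates, because the boundary Lagrangian at each time $t$ is literally the $\mathcal{A}_r^{0}$-parallel translate of $L_{0},L_{4}$ through the flat region, so the identification with $f'(r)$ is forced by flatness rather than arranged by hand. Your Stage~3, as written, asserts that one can interpolate $\varphi_{r,*}\mathcal{A}_r^{0}$ to an independently chosen $\mathcal{B}_r$ while keeping $\energy+\kappa<\hbar$; this is plausible but not obviously guaranteed, since the holonomy constraint from $f'(r)$ imposes a lower bound on $\energy(\mathcal{B}_r)$ that you have not estimated. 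The retraction argument makes this issue disappear.
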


\begin{proof}
Let $R _{\pm} \subset \mathcal{S}_{r} ^{\wedge}  $ be as before. 
Fix a family of smooth deformation retractions $$ret _{r}: \mathcal{S} _{r} ^{0} \times I \to
\mathcal{S}_{r}^{0}, $$  of $\mathcal{S} _{r} ^{0} $ onto $R
^{-}  $, smooth in $r$, with $ret _{0} = id$.   We may use the smooth Riemann mapping theorem to identify
each $R ^{-} \subset \mathcal{S} _{r} ^{0}   $ with its
induced complex structure $j _{r}   $ with $(\mathcal{O},
j _{st}) $, smoothly in $r$. 

Set $ret _{r,t} = ret _{r}|_{\mathcal{S} _{r} ^{0} \times
\{t\}} $, set $\mathcal{S} _{r} ^{t} = \image ret
_{r,t}$ so that $\mathcal{S} _{r} ^{1} = R _{-}$, set $\widetilde{\mathcal{S}}_{r}^{t}
= \widetilde{\mathcal{S}}_{r}^{0}| _{\mathcal{S} _{r}
^{t}}$, i.e. the pull-back under inclusion of $\mathcal{S}
_{r} ^{t}$. Next set $\mathcal{A} _{r}^{t} = i _{r,t} ^{*}
\mathcal{A} _{r} ^{0}  $ where $i _{r,t}:
\widetilde{\mathcal{S}}_{r}^{t} \to
\widetilde{\mathcal{S}}_{r}^{0}$ is the inclusion. 

Let
$\mathcal{L} _{r} ^{t}$ be the Lagrangian sub-bundle uniquely
determined by the following conditions:
\begin{enumerate}
	\item In the end bundle chart of $\widetilde {\mathcal{S}} _{r} ^{t}$, the fiber of $\mathcal{L}
	_{r} ^{t}$ over $\{0\} \times \{t\} \subset [0,1] \times
	(-\infty, 0)$ is $L _{0} $.  The fiber of $\mathcal{L}
	_{r} ^{t}$ over $\{1\} \times \{t\} \subset [0,1] \times
	(-\infty, 0)$ is $L _{4} $.
	\item $\mathcal{L} _{r} ^{t}$ is preserved by $\mathcal{A}
	_{r} ^{t}$.
\end{enumerate}
Existence of $\mathcal{L} _{r} ^{t}$ as above, is guaranteed
by the flatness of $\mathcal{A} _{r} ^{0}$ over $R ^{+} $. 

We then get a Hamiltonian structure $$\widetilde
{\mathcal{H}}  = \{\widetilde{\mathcal{S}}_{r}^{t},
{\mathcal{S}}_{r}^{t}, \mathcal{L}
_{r}^{t}, \mathcal{A} ^{t}_{r}    \} _{r,t}.$$  
Given the Riemann mapping argument above,
$ \{{\mathcal{S}}_{r}^{1} \simeq R _{-}    \} \simeq
\{\mathcal{O} \}$ smoothly in $r$.
So $\widetilde
{\mathcal{H}}$ is a concordance 
between $\mathcal{H} ^{0} $ and $$\{
\mathcal{O} \times S ^{2}, \mathcal{O}, \mathcal{L}_{f'
(r)}, \mathcal{B} _{r}   \},$$ where $\mathcal{B} _{r}$ is
$\mathcal{A} _{r} ^{1}$ under identifications. Finally, the usual application of ~\cite[Lemma
\ref{lemma:gluinglowerbound}]{cite_SavelyevQuantumMaslov}
gives that $\widetilde {\mathcal{H} }$ is an $A _{0}$-admissible concordance.
\end{proof}

\section{Finishing up the proof of Lemma \ref{lemma:nontrivialelement}}
The existence of unexcited data $\mathcal{D} $ is proved in
Section \ref{sec:constructionSmallData}.
Given this existence, starting with \eqref{eq:mu=H^}  we
showed that $[\mu ^{4} _{\Sigma ^{4}} (\gamma _{1} , \ldots
, \gamma _{4} )]$ is non-vanishing in Floer homology iff
$$[ev (\mathcal{H} _{0}, A _{0})] \in HF (L _{0}, L _{4}),
$$ is non-vanishing. We then use Lemma
\ref{lemma:concordant} to identify $[ev (\mathcal{H} _{0},
A _{0})]$ with $[ev (\Theta', A _{0})]$.

Now by definitions $$[ev (\Theta', A _{0})] = \Psi (a),$$
where the right hand side is the quantum Maslov class as
defined in ~\cite{cite_SavelyevQuantumMaslov}, and where $a$
is a spherical class as in \eqref{eq_a}. Given that $g$ is
non-trivial the corresponding class $a$ is non-trivial and
so $\Psi (a)$ is non-trivial by 
Theorem ~\cite[Theorem \ref{thmNonZeroPsi}]{cite_SavelyevQuantumMaslov}.
This together with Lemma \ref{lemma:corellator} imply Lemma \ref{lemma:nontrivialelement}.
\qed
\section{Singular and simplicial connections and curvature bounds} \label{sec:qcurvature}
Let $\mathcal{A}$ be a $G$ connection on a principal $G$ bundle $P \to \Delta ^{n} $, and the Finsler norm $\mathfrak{n}$ on $\lie G$ be as in Section \ref{sec:non-metric} of the introduction. 
As previously discussed, a given system $\mathcal{U}$ in particular specifies maps: 
\begin{equation*}
u (m_1, \ldots, m_n, r,n): \mathcal{S} _{r} \to \Delta ^{n},
\end{equation*}
where $r \in \overline {\mathcal{R}} _{n}$, $\mathcal{S} _{r} $ is the fiber of ${\mathcal{S}}  ^{\circ} _{n}    $ over $r$, and where $(m_1, \ldots, m _{n} )$ is the composable chain of morphisms in $\Pi (\Delta ^{n} )$, $m _{i} $ being the edge morphism from the vertex $i-1$ to $i$.
%
Then define
\begin{equation} \label{eq:definitionArea}
   \energy _{\mathcal{U}} (\mathcal{A}) = \sup _{r}
   \energy _{\mathfrak n}  (u (m_1, \ldots, m_n, r,n) ^{*} \mathcal{A}),
\end{equation}
where $\energy _{\mathfrak n} $ on the right hand side is as defined in equation \eqref{eq:areasurface}.
In the case $G= \operatorname {Ham} (M,\omega)$ we take
$$\mathfrak n: \lie \operatorname {Ham} (M, \omega) \to \mathbb{R}$$ to be $$\mathfrak n (H) = |H| _{+} =  \max _{M} H. $$

Let $\omega$ be the area 1 Fubini-Study symplectic 2-form on
$M=\mathbb{CP} ^{1}$. Then the pull-back by the natural map
$$\lie h: \lie PU (2) \to \lie \operatorname {Ham} (\mathbb{CP} ^{1},\omega) \simeq C ^{\infty} _{0} (\mathbb{CP} ^{1} )$$ of the semi-norm: $|H| _{+} =  \max _{M} H$ is the operator norm on $PU (2)$, up to normalization. This will be used to get the specific form of Theorem \ref{thm:lowerboundsingular},  from the more general form here.

\subsection {Simplicial connections} \label{section:simplicialconnections}
We now introduce the notion of simplicial connections, which can partly be understood as simplicial resolutions of singular connections.
Let ${G} \hookrightarrow P \to X$ be a principal $G$ bundle, where $G$ is a Frechet Lie group.
Denote by $X_{\bullet} $ the smooth singular set of
$X$, i.e. the simplicial set whose set of $n$-simplices, $X _{\bullet} (n) $ consists smooth maps $\Sigma: \Delta ^{n} \to X$, with $\Delta ^{n} $ standard topological $n$-simplex with vertices ordered $0, \ldots, n$. 
And denote by $Simp(X _{\bullet})$ the category with objects $\cup _{n} X _{\bullet} (n)$ and with $hom (\Sigma _{0}, \Sigma _{1})$ commutative diagrams:
\begin{equation*}
\begin{tikzcd}
   \Delta ^{n} \ar [rd, "\Sigma _{0}"] \ar[r, "mor"] & \Delta ^{m} \ar[d, "\Sigma _{1}" ] \\
& X,
 \end{tikzcd}
\end{equation*}
for $mor$ a simplicial face map, that is an injective affine map preserving order of the vertices.
\begin{definition} \label{def:singualCon}
Define a \textbf{\emph{simplicial $G$-connection $\mathcal{A}$}} on $P$ to be the following data:
 \begin{itemize}
   \item For each $\Sigma: \Delta ^{n} \to X  $  in $X _{\bullet} (n) $ a smooth  $G$-connection $\mathcal{A} _{\Sigma} $ on $\Sigma ^{*}P \to \Delta ^{n} $, (a usual Ehresmann $G$-connection.)
\item For a morphism $mor: \Sigma _{0} \to \Sigma _{1}  $ in $Simp (X _{\bullet} )$, 
we ask that $mor ^{*} \mathcal{A} _{\Sigma _{1}} = \mathcal{A} _{\Sigma _{0} } $. 
\end{itemize}
\end{definition}
\begin{example} \label{example:exampleSimplicial}
   If $\mathcal{A}$ is a smooth $G$-connection on
   $P$, define a simplicial connection by
   $\mathcal{A} _{\Sigma} = \Sigma ^{*}
   \mathcal{A}  $ for every simplex $\Sigma \in X
   _{\bullet} $. We call such a simplicial
   connection \textbf{\emph{induced}}.
\end{example}
If we try to ``push forward'' a simplicial connection to get a ``classical'' connection on $P$ over $X$, then we get a kind of multi-valued singular connection. Multi-valued because each $x \in X$ may be in the image of a number of $\Sigma: \Delta ^{n} \to X $ and $\Sigma$ itself may not be injective, and singular because each $\Sigma$ is in general singular so that the naive push-forward may have blow up singularities. We will call the above the naive pushforward of a simplicial connection. 

\begin{proof} [Proof of Theorem
   \ref{thm:lowerboundsingular} and Corollary \ref{corol:example}] We will prove this by way of a stronger result.
Let $P$ be a Hamiltonian fibration $S ^{2} \hookrightarrow
P \to S ^{4}  $, and $\mathcal{A}$ a simplicial $\mathcal{G}
= \operatorname {Ham} (S ^{2},  \omega) $ connection on $P$.
Denote by $\sigma ^{1} _{0} \in S ^{4} _{\bullet} $ the degenerate $1$-simplex at $x _{0}$, in other words the constant map: $\sigma ^{1} _{0}: [0,1] \to x _{0}. $  Let $\kappa$ be the $L ^{\pm} $-length of the holonomy path of $\mathcal{A} _{\sigma ^{1} _{0}}$ over $[0,1]$. 

Finally, let $\Sigma _{\pm} \in S ^{4} _{\bullet} (4)  $ be
a complementary pair as in Section
\ref{section:model}.
The connection $\mathcal{A} $ gives us a simplicial connection  as in Example
\ref{example:exampleSimplicial}.
By an inductive procedure as
in Part I, Lemma 5.6, we may find 
perturbation data $\mathcal{D}$ for $P$ so that with respect
to $\mathcal{D} $ the following is satisfied. 
\begin{align} 
& \forall r: pr _{1} \mathcal{F} (L ^{0} _{0}, \ldots, L ^{n} _{0},  \Sigma _{\pm},r)  \simeq _{\delta}  
u (m_1, \ldots, m_s, r,n) ^{*} \mathcal{A} _{\Sigma _{\pm} }, \label{eq:F=G} \\
 & \mathcal{A} (L _{0}, L _{0}) \simeq _{\delta}  \mathcal{A} _{\sigma ^{1} _{0}}, \label{eq:F=G2}
\end{align}
where $L ^{i} _{0}  $ are the objects as before,
where $\simeq _{\delta} $ means $\delta$-close in the metrized $C ^{\infty} $ topology, and $\delta$ is as small as we like.  Here we are using notation of Part I as before. Set $$\widetilde{u} _{r}:= \Sigma _{-}  \circ u(m_1, \ldots, m _{4},r,4),$$
so $\widetilde{u} _{r}: \mathcal{S} _{r} \to S ^{4}$.
Set $\widetilde{\mathcal{S}} _{r}:= \widetilde{u} _{r} ^{*} P, $
set 
$\mathcal{A}'_{r}:= pr _{1} \mathcal{F} (L ^{0} _{0}, \ldots, L ^{n} _{0},  \Sigma _{-},r)$
and set $$\{\Theta _{r}\} :=  \{\widetilde{\mathcal{S}} _{r},
   {\mathcal{S} _{r}}, \mathcal{L} _{r}, \mathcal{A}'_{r}\}. $$
\begin{definition}\label{def:perfect}
Let $\delta $ and $\mathcal{A} (L
_{0}, L _{0})$ be as above. We say that $\mathcal{A}$ is \textbf{\emph{perfect}} if 
the following holds. For every arbitrarily small $\delta$  $\mathcal{A}
(L _{0}, L _{0}  )$ can be chosen  so that the corresponding Floer chain complex $CF (L _{0},
   L _{1}, \mathcal{A} (L _{0}, L
   _{0}) )  $ is perfect.
\end{definition}
\begin{theorem} \label{prop:alternative} 
Let $\mathcal{A}$ be a perfect simplicial Hamiltonian
connections on $P$. If 
$P$ is non-trivial as a Hamiltonian bundle
 then 
\begin{equation*}
   (\energy _{\mathcal{U}}  (\mathcal{A} _{\Sigma _{+}  })
	 \geq \hbar -5 \kappa) \lor (\energy _{\mathcal{U}}  ( \mathcal{A} _{\Sigma _{-}} ) \geq \hbar - 5\kappa),
\end{equation*}
\end{theorem}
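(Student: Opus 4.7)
The plan is to argue by contradiction. Suppose that both $\energy_{\mathcal{U}}(\mathcal{A}_{\Sigma_+}) < \hbar - 5\kappa$ and $\energy_{\mathcal{U}}(\mathcal{A}_{\Sigma_-}) < \hbar - 5\kappa$. Using the perfectness hypothesis together with the inductive construction of Part I (Lemma 5.6), I first build perturbation data $\mathcal{D}$ for $P$ satisfying \eqref{eq:F=G} and \eqref{eq:F=G2} for an arbitrarily small $\delta > 0$. In particular, the connections $\mathcal{A}'_r = pr_1 \mathcal{F}(L_0^0, \ldots, L_0^n, \Sigma_\pm, r)$ arising in the data $\mathcal{D}$ are $\delta$-close to $u(m_1, \ldots, m_4, r, 4)^* \mathcal{A}_{\Sigma_\pm}$, while the Floer-theoretic connection $\mathcal{A}(L_0, L_0)$ is $\delta$-close to $\mathcal{A}_{\sigma^1_0}$.

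Next, I would exploit the supremum formula \eqref{eq:definitionArea} and the strict inequalities in the contradiction hypothesis to transfer the energy bounds to the Fukaya-side connections: choosing $\delta$ small and noting that the $L^\pm$-length $\kappa'$ of the holonomy of $\mathcal{A}(L_0, L_0)$ differs from $\kappa$ by $O(\delta)$, I obtain $\energy(\mathcal{A}'_r) < \hbar - 5\kappa'$ for all $r$ and for both choices $\Sigma_+$ and $\Sigma_-$. For the $\Sigma_+$ side this is exactly hypothesis \eqref{eqAreaAd} of Section \ref{sec:constructionSmallData}; for the $\Sigma_-$ side it is the analogous bound for the Hamiltonian structure built over $\Sigma_-$.

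With both bounds in hand, I rerun the Riemann-Roch / Maslov dimension argument of Lemma \ref{lemma:empty0}. By \eqref{eqDimension}, a relevant dimension-zero moduli space can be non-empty for $3 \leq d \leq 4$ only if $Maslov^{vert}(A^{/}) \leq -2$, and the monotonicity estimate \cite[Lemma 2.34]{cite_SavelyevQuantumMaslov} together with the energy bound then rules out such classes for both $\Sigma_+$ and $\Sigma_-$. Hence $\mathcal{D}$ is unexcited and, moreover, $\mu^4_{\Sigma_-}(\gamma_1, \ldots, \gamma_4) = 0$ in the Floer chain complex. The latter is trivially exact, so by Lemma \ref{lemma:corellator} the 4-simplex $T$ extending $sec$ exists. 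This directly contradicts Lemma \ref{lemma:nontrivialelement}, which asserts that for the non-trivial bundle $P$ no such extension can exist for any unexcited $\mathcal{D}$.

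The principal technical obstacle I foresee is the careful bookkeeping between the $\delta$-approximation, the difference between the two "$\kappa$"s (that of $\mathcal{A}_{\sigma^1_0}$ on the simplicial side versus that of $\mathcal{A}(L_0, L_0)$ on the Floer side), and the strict inequality $< \hbar - 5\kappa$ appearing in the hypothesis. The perfectness hypothesis is precisely what permits a generic choice of $\mathcal{A}(L_0, L_0)$ that simultaneously achieves the $\delta$-approximation \eqref{eq:F=G2} and yields a well-defined, transverse Floer chain complex; since the assumption supplies built-in slack of size $5\kappa$, this slack is ample to absorb the $O(\delta)$ approximation error provided $\delta$ is chosen small enough after $\kappa$ has been fixed.
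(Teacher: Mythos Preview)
Your proposal is correct and follows essentially the same approach as the paper. The only difference is organizational: the paper assumes $\energy_{\mathcal{U}}(\mathcal{A}_{\Sigma_+}) < \hbar - 5\kappa$, uses this to make $\mathcal{D}$ unexcited, invokes Theorem~\ref{thm:noSection} to conclude $T$ does not exist, then via Lemma~\ref{lemma:corellator} obtains $[\mu^4_{\Sigma_-}(\gamma_1,\ldots,\gamma_4)]\neq 0$, and finally uses the non-emptiness of the moduli space together with \cite[Lemma~\ref{lemma:gluinglowerbound}]{cite_SavelyevQuantumMaslov} and a $\delta\to 0$ limit to produce the lower bound on $\energy_{\mathcal{U}}(\mathcal{A}_{\Sigma_-})$; you instead run the contrapositive, assuming both energies small and deriving $T$ exists, contradicting Lemma~\ref{lemma:nontrivialelement}. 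The two arguments are logically equivalent and use the same lemmas.
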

\begin{proof}
Suppose 
\begin{equation} \label{eq:arealesshalf}
\energy _{\mathcal{U}}  (\mathcal{A} _{\Sigma _{+} } ) < \hbar - 5 \kappa.
\end{equation}
Then by \eqref{eq:F=G}, \eqref{eq:F=G} and by
~\cite[Lemma
\ref{lemma:gluinglowerbound}]{cite_SavelyevQuantumMaslov} $\mathcal{D}$, as defined
above, can be assumed to be unexcited provided $\delta$ is chosen to be sufficiently small. Take the unital replacement as in Lemma \ref{lemma:corellator}. Since we know that $K (P)$ does not admit a section by Theorem \ref{thm:noSection}, the simplex $T$ of the Lemma \ref{lemma:corellator} does not exist. 
Hence, again by this lemma 
$$ev (\{\Theta _{r} \}, A _{0}) = [\mu ^{4}
_{\Sigma _{-}} (\gamma _{1},  \ldots,  \gamma _{4} )] \neq 0.$$  
In particular $$\overline {{\mathcal{M}}}(\{\Theta _{r} \}, A _{0} ) \neq \emptyset.$$ 

So by ~\cite[Lemma
\ref{lemma:gluinglowerbound}]{cite_SavelyevQuantumMaslov} there exists an $r _{0}$ so that
\begin{equation} \label{eq:ineq1}
\energy (\mathcal{A}'_{r _{0}}) \geq \hbar - 5 \kappa'.
\end{equation}
where $\kappa'$ denotes the $L ^{\pm} $ length of the
holonomy path in $\operatorname {Ham} (S ^{2},  \omega) $ of $\mathcal{A} (L _{0}, L _{0}) $. 
By \eqref{eq:F=G2} $\kappa' \to \kappa$ as $\delta
\to 0$.
By \eqref{eq:F=G}, \eqref{eq:ineq1}
passing to the limit as $\delta \to 0$ we get:
$$\energy _{\mathcal{U}}  ( \mathcal{A} _{\Sigma _{-}} ) \geq \hbar - 5\kappa.$$
\end{proof}
\begin{corollary}
   \label{corol:alternative} 
Let $\mathcal{A}$ be a $PU (2) $ connection 
on a non-trivial principal $PU (2) $
bundle $P \to S ^{4}$. 
 Then 
\begin{equation*}
   (\energy _{\mathcal{U}}  (\mathcal{A} _{\Sigma
   _{+}  })  \geq \hbar -5 \kappa) \lor (\energy
   _{\mathcal{U}}  ( \mathcal{A} _{\Sigma _{-}} )
   \geq \hbar - 5\kappa).
\end{equation*}
\end{corollary}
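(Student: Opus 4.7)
The plan is to deduce the corollary from Theorem \ref{prop:alternative} by pushing forward the $PU(2)$ data to a Hamiltonian setting, so that all the Floer-theoretic machinery becomes available.

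First, I would use the natural group homomorphism $h: PU(2) \to \operatorname{Ham}(\mathbb{CP}^1, \omega)$ given by the action of $PU(2)$ on $(\mathbb{CP}^1, \omega_{FS})$, to associate to the principal $PU(2)$-bundle $P \to S^4$ a Hamiltonian bundle $P_{\operatorname{Ham}} := P \times_{PU(2)} \mathbb{CP}^1$. Non-triviality of $P$ translates to non-triviality of $P_{\operatorname{Ham}}$ because the induced map $\pi_3(PU(2)) \to \pi_3(\operatorname{Ham}(S^2,\omega))$ is an isomorphism (Smale), so the clutching element is preserved. The connection $\mathcal{A}$ pushes forward via $h$ to a Hamiltonian connection $h_*\mathcal{A}$ on $P_{\operatorname{Ham}}$, and via Example \ref{example:exampleSimplicial} this yields an induced simplicial Hamiltonian connection $\widetilde{\mathcal{A}}$ on $P_{\operatorname{Ham}}$.

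Next, I would reconcile the two notions of energy. By the remark in Section \ref{sec:qcurvature}, the pullback under $\lie h$ of the Finsler semi-norm $|H|_+ = \max_M H$ on $\lie \operatorname{Ham}(\mathbb{CP}^1,\omega)$ is precisely the operator norm on $\lie PU(2)$ (up to the chosen normalization making the shortest path from $id$ to $-id$ have length $\frac{1}{2}$, i.e.\ $\hbar = \tfrac{1}{2}\Vol(S^2,\omega) = \tfrac12$). Consequently for each fiberwise map $u(m_1,\ldots,m_n,r,n)$ one has
\begin{equation*}
\energy_{\mathfrak n}\bigl(u^*\mathcal{A}\bigr) = \energy_{\mathfrak n}\bigl(u^*\widetilde{\mathcal{A}}_\Sigma\bigr),
\end{equation*}
so taking the supremum over $r$ gives $\energy_{\mathcal{U}}(\mathcal{A}_{\Sigma_\pm}) = \energy_{\mathcal{U}}(\widetilde{\mathcal{A}}_{\Sigma_\pm})$ after identifying the $PU(2)$ simplicial connection with its pushforward.

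Then I would verify the perfectness hypothesis required by Theorem \ref{prop:alternative}. Since $\widetilde{\mathcal{A}}_{\sigma^1_0}$ is the pushforward of a flat $PU(2)$-holonomy (acting linearly on $S^2$ as an element of $PU(2)$), we can choose the Fukaya perturbation data $\mathcal{A}(L_0,L_0)$ arbitrarily close to $\widetilde{\mathcal{A}}_{\sigma^1_0}$ and generated by an autonomous Hamiltonian producing a Morse--Bott nondegenerate, and after a small autonomous perturbation nondegenerate, Floer complex $CF(L_0,L_0,\mathcal{A}(L_0,L_0))$. This complex is perfect for the standard equator, giving the perfectness condition of Definition \ref{def:perfect}. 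Finally applying Theorem \ref{prop:alternative} to $\widetilde{\mathcal{A}}$ yields the alternative, and translating the energies back through $h_*$ gives Corollary \ref{corol:alternative}.

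The main obstacle I anticipate is the perfectness verification: one has to check that the autonomous model $\mathcal{A}(L_0,L_0)$ used in Section \ref{sec_qualitativedescription} indeed yields a perfect Floer complex after the needed small perturbation, and that this perturbation can be made simultaneously close (in the $C^\infty$ sense demanded by \eqref{eq:F=G2}) to the simplicial datum $\widetilde{\mathcal{A}}_{\sigma^1_0}$ coming from the $PU(2)$ pushforward. Everything else is a matter of bookkeeping through the functorial identifications $PU(2) \to \operatorname{Ham}(S^2,\omega)$.
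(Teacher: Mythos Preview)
Your approach is essentially the same as the paper's: pass from the $PU(2)$ bundle to the associated Hamiltonian $S^2$-bundle, observe that the resulting simplicial Hamiltonian connection is perfect, and invoke Theorem~\ref{prop:alternative}. The paper's own proof consists of a single sentence asserting that a simplicial $PU(2)$ connection is ``automatically perfect'' when viewed as a Hamiltonian connection on the associated bundle, and then cites the theorem; you have simply unpacked the steps (non-triviality via Smale, matching of the two energy functionals via the norm identification, and the perfectness check) that the paper leaves implicit.

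One small comment on your perfectness discussion: you can streamline it. The point is just that the holonomy of $\mathcal{A}_{\sigma^1_0}$ lies in $PU(2)$ and hence carries the equator $L_0$ to another standard equator; after an arbitrarily small $PU(2)$ perturbation these two equators are transverse, and the resulting Floer chain complex has exactly two generators in distinct $\mathbb{Z}_2$-degrees, so the differential vanishes and the complex is perfect. There is no need to pass through an autonomous or Morse--Bott intermediary. Also note that the paper's proof treats $\mathcal{A}$ as an arbitrary \emph{simplicial} $PU(2)$ connection, not necessarily one induced from a smooth connection via Example~\ref{example:exampleSimplicial}; your argument as written covers this more general case in the perfectness step, but your initial setup via Example~\ref{example:exampleSimplicial} is the special induced case (where in fact $\mathcal{A}_{\sigma^1_0}$ is trivial and $\kappa=0$). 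This is harmless for the corollary as stated, just worth being aware of.
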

\begin{proof}
 A simplicial $PU (2)$ connection $\mathcal{A}$ on a
principal $PU (2)$ bundle $PU (2) \hookrightarrow
P' \to S ^{4} $ is automatically perfect, when
understood as a Hamiltonian connection on the
associated bundle $S ^{2}  \hookrightarrow P \to S
^{4} $. So that this is an immediate consequence
of the theorem above.
\end{proof}
To prove Corollary \ref{corol:example},
we just note that
for an induced simplicial Hamiltonian connection
$\mathcal{A} $, as defined in
Example \ref{example:exampleSimplicial}, $\mathcal{A} _{\sigma ^{1} _{0}}$
is trivial.  And hence $\mathcal{A}$ is
automatically perfect.  So that this corollary follows by Theorem \ref{prop:alternative}.

\end{proof}

\appendix 

\section {Homotopy groups of Kan complexes} \label{appendix:Kan}
For convenience let us quickly review Kan
complexes just to set notation.  This notation is
also used in Part I. Let $$\Delta ^{n}_{\bullet } (k):= hom _{\Delta} (k, n),  $$  be the standard representable $n$-simplex,
where $\Delta$ is as in Section \ref{sec_preliminaries}. 

Let $\Lambda ^{n} _{k} \subset \Delta ^{n} _{\bullet } $ denote the sub-simplicial set corresponding to the ``boundary'' of $\Delta ^{n} _{\bullet}   $ with the $k$'th face removed, $0 \leq k \leq n$. 
By $k'th$ face we mean the face opposite
to the $k$'th vertex. 
Let $X _{\bullet}$ be an abstract simplicial set.
A simplicial map $$h: \Lambda ^{n} _{k} \subset \Delta ^{n} _{\bullet}  \to X _{\bullet}  $$ will be called a \textbf{\emph{horn}}.
A simplicial set $S _{\bullet}$ is said to be a \textbf{\emph{Kan complex}} if for all $n,k \in \mathbb{N}$ given a diagram  with solid arrows
\begin{equation*} 
\begin {tikzcd}
   \Lambda ^{n} _{k} \ar [r, "h"]  \ar [d, "i"] & S _{\bullet} \\
 \Delta ^{n} _{\bullet} \ar [ur, dotted, "\widetilde{h}"]  &, \\ 
\end{tikzcd}
\end{equation*}
there is a dotted arrow making the diagram commute. 
The map $\widetilde{h} $ will be called \textbf{\emph{the Kan filling}} of the horn $h$. The $k$'th face of $\widetilde{h} $ will be called \textbf{\emph{Kan filled face along $h$}}. 
As before we will denote Kan complexes and $\infty$-categories by calligraphic letters.

Given a pointed Kan complex $(\mathcal{X} ,x) $ and $n \geq 1$ the
\emph{$n$'th simplicial homotopy group} of
$(\mathcal{X},x) $: $\pi _{n} (\mathcal{X}, x ) $ is defined to be the set
of equivalence classes of maps
$$\Sigma: \Delta ^{n} _{\bullet}   \to \mathcal{X},$$ 
such that $\Sigma$ takes $\partial \Delta
^{n}_{\bullet} $ to $x _{\bullet} $, with the
latter denoting the image of $\Delta ^{0}
_{\bullet} \to \mathcal{X}   $, induced by the vertex inclusion $x \to X$.

More precisely, we have a commutative diagram:
\begin{equation*} 
\begin{tikzcd}
   \Delta ^{n} _{\bullet}  \ar [rd, "\Sigma"] \ar[r] & \Delta ^{0} _{\bullet}  \ar[d, "x" ] \\
& \mathcal{X}.
 \end{tikzcd}
\end{equation*}
\begin{example}
   \label{example:singularset} 
When $\mathcal{X} = X _{\bullet} $ is the
singular simplicial set of a  
topological space $X$, the maps above are in complete correspondence with maps:
\begin{equation*}
\Sigma: \Delta ^{n}  \to X,   
\end{equation*}
taking the topological boundary of $\Delta ^{n} $ to $x$.

\end{example}

For $X _{\bullet }$ general simplicial set, a pair of maps $\Sigma _{1}: \Delta ^{n}
_{\bullet}   \to X _{\bullet}, \Sigma _{2}: \Delta
^{n} _{\bullet}   \to X _{\bullet},$ are
equivalent if there is a diagram, called simplicial homotopy: 
\begin{equation*}
\begin{tikzcd}
 \Delta ^{n} _{\bullet} \ar[rd, "\Sigma _{1}"] \ar
 [d, "i_0"] & \\
 \Delta ^{n} _{\bullet}  \times I _{\bullet}   \ar
 [r,""] & X _{\bullet} \\
 \Delta ^{n} _{\bullet}  \ar [u, "{i_1}"]  \ar
 [ru, "{\Sigma _{2}}" ].  &  
\end{tikzcd}
\end{equation*}
such that $\partial \Delta ^{n}_{\bullet} \times I _{\bullet}$ is taken by $H$
to $x _{\bullet}  $.
The simplicial homotopy groups of a Kan complex $
(\mathcal{X},x)$ coincide with the
classical homotopy groups of the geometric
realization $(|\mathcal{X}|,x)$. 
\begin{proof} [Proof of Lemma \ref{lemma:kanfib}]  
We refer the reader to Part I, Appendix A.2,
for more details on the notions here.  We prove a stronger claim. 
\begin{lemma}
   \label{lemma:innerfibkan} Let $p: \mathcal{Y}  \to \mathcal{X}$
be an inner fibration of quasi-categories
   $\mathcal{Y},\mathcal{X}$,
with $\mathcal{X}$ a Kan complex. And let 
     $K (\mathcal{Y}) \subset \mathcal{Y}
 $ denote  the maximal Kan subcomplex.
Then $p: K (\mathcal{Y})  \to \mathcal{X} $ is a Kan
fibration.
\end{lemma}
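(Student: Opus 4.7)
My plan is to verify the Kan-fibration property for $p \colon K(\mathcal{Y}) \to \mathcal{X}$ by checking horn lifting case by case, exploiting two ingredients: the inner-fibration hypothesis on $p$, and the closure of equivalences in a quasi-category under composition and $2$-out-of-$3$. The key bookkeeping fact is that for $n \ge 2$ the horn $\Lambda^n_k$ already contains every $1$-simplex of $\Delta^n$, so for lifting problems with $n \ge 2$ the only thing to verify beyond the existence of a filler in $\mathcal{Y}$ is whether the filler lies in $K(\mathcal{Y})$; this is automatic for $n \ge 3$, while for $n = 2$ the single new edge is a composite of two edges of $K(\mathcal{Y})$ and hence still an equivalence.

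Given this observation, the inner horns $\Lambda^n_k$ with $0 < k < n$ (and $n \ge 2$) are handled directly by the inner-fibration property of $p$. For the outer horns $\Lambda^n_0$ with $n \ge 2$ (the case $\Lambda^n_n$ being dual), the initial edge $\Delta^{\{0,1\}} \subset \Lambda^n_0$ maps to an equivalence $e \colon y_0 \to y_1$ in $\mathcal{Y}$ whose image in the Kan complex $\mathcal{X}$ is automatically an equivalence; by the standard characterization of equivalences in an inner fibration (Lurie, \emph{Higher Topos Theory}, Prop.~2.4.1.5 and its dual), $e$ is $p$-cocartesian, and the universal property of cocartesian edges then supplies a filler in $\mathcal{Y}$, which lands in $K(\mathcal{Y})$ by the observation above.

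The main obstacle is the case of the outer $1$-horns $\Lambda^1_0$ and $\Lambda^1_1$: given a vertex $y \in K(\mathcal{Y})$ and a $1$-simplex $f \colon p(y) \to x$ in $\mathcal{X}$, one must produce an equivalence $\widetilde f \colon y \to \widetilde y$ in $\mathcal{Y}$ lifting $f$. My plan is to exploit the Kan-complex structure of $\mathcal{X}$ by first extending $f$ to a map $\iota \colon J \to \mathcal{X}$, where $J$ is the nerve of the walking-isomorphism groupoid on two objects, with $\Delta^1 \hookrightarrow J$ carried to $f$; this extension exists because $f$ is already an equivalence in $\mathcal{X}$. I would then build the lift $\widetilde\iota \colon J \to \mathcal{Y}$ with $\widetilde\iota(0) = y$ by a skeletal induction, at each stage presenting the extension problem as an inner-horn filling against $p$, and set $\widetilde f := \widetilde\iota|_{\Delta^1}$. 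The auxiliary $2$-simplices inside $\widetilde\iota(J)$ then witness $\widetilde f$ as an equivalence in $\mathcal{Y}$, hence in $K(\mathcal{Y})$. The technical heart of the proof is verifying that the stages of this induction can indeed be framed as inner-anodyne extensions against $p$; here one uses the invertibility of the edges of $J$ together with the Kan-complex hypothesis on $\mathcal{X}$ to supply the missing faces from the base.
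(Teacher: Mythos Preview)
Your handling of the inner horns and of the outer horns for $n \geq 2$ is correct and in fact more explicit than the paper's: the paper shows $p|_{K(\mathcal{Y})}$ is an inner fibration the same way you do, but then simply asserts that since the fibers are Kan it ``readily follows'' that the map is a Kan fibration, without treating outer horns directly. Your appeal to $p$-cocartesian lifts (via HTT~2.4.1.5) for the $n\ge 2$ outer horns is a genuine argument where the paper offers none.

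However, your treatment of $\Lambda^1_0$ and $\Lambda^1_1$ fails, and indeed \emph{must} fail: the lemma is false as stated. The inclusion $\{0\}\hookrightarrow J$ is not inner anodyne, because inner anodyne maps are bijective on $0$-simplices (every inner horn inclusion $\Lambda^n_k \hookrightarrow \Delta^n$ with $0<k<n$, hence $n\ge 2$, already contains all vertices). Your skeletal induction therefore breaks at the very first step, which is exactly the $1$-horn lifting you set out to solve. For a counterexample to the lemma itself, take $\mathcal{Y}=\Delta^0$, $\mathcal{X}=J$, and $p$ the inclusion of vertex $0$: this is an inner fibration between Kan complexes (any commuting inner-horn square forces the base simplex in $J$ to be constant at $0$, since the horn already contains every edge and $J$ is a nerve), yet the nondegenerate edge $0\to 1$ in $J$ has no lift. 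The paper's ``readily follows'' conceals the same gap. What rescues the intended application to Lemma~\ref{lemma:kanfib} is the stronger hypothesis there: a Cartesian fibration is in particular an isofibration, so the $\Lambda^1$-lifts come from cartesian lifts of equivalences. If one strengthens the hypothesis of the present lemma from ``inner fibration'' to ``categorical fibration'' (inner fibration plus isofibration), your argument for $n\ge 2$ stands and the $n=1$ case becomes precisely the isofibration condition.
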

The above is probably well known,  but it is simple
to just provide the proof for convenience.   
\begin{proof}
By definition of an inner fibration,  whenever we are given a commutative diagram with
solid arrows and with $0<k<n$:
\begin{equation} 
\begin{tikzcd} \label{diagram:inner2}
   \Lambda ^{n} _{k}  \ar [r, "\sigma"]  \ar[d, hookrightarrow] &
K (\mathcal{Y}) \ar [r, hookrightarrow] &  \mathcal{Y} 
\ar [dl, "p"]  \\
 \Delta ^{n} _{} \ar [r] \ar [urr, "\Sigma ", dashrightarrow] & \mathcal{X}
 _{}, \\
\end{tikzcd} 
\end{equation}
there exists a dashed arrow $\Sigma $ as
indicated, making the whole diagram commutative.  
When $n>2$ the edges, i.e. 1-faces, of $\Sigma$
   are all automatically isomorphisms in $\mathcal{Y} _{}$, as $\Sigma$ extends
$\sigma$, 
and all edges of $\sigma$ are isomorphisms
by definition.  For $n=2$  the edges of $\Sigma$
are either edges of 
$\sigma$, or are compositions of edges of $\sigma$
in the quasi-category $\mathcal{Y}$, and hence again
always invertible. 

It follows that 
$\Sigma$ maps into $K (\mathcal{Y}) \subset \mathcal{Y} $. Since the starting
diagram was arbitrary, we just proved
that $p: K (\mathcal{Y} _{}) \to \mathcal{X} _{} $ is an inner fibration.  In particular the pre-images $p
^{-1} (\Sigma (\Delta  ^{n} _{})) \subset K
(\mathcal{Y} _{ }) 
 $ are quasi-categories, for all $n$, where $\Sigma: \Delta  ^{n}
_{} \to \mathcal{X}$ is any $n$-simplex,  see
Part I, Appendix A.2. But $K (\mathcal{Y}) $ is a Kan
complex, so that also the above pre-images $p
^{-1} (\Sigma (\Delta  ^{n} _{})) $ are Kan
complexes. It readily follows from this that   
$p: K (\mathcal{Y} _{ }) \to \mathcal{X} _{ } $ is a Kan
fibration.
\end {proof}  
The main lemma then follows, since if $p: \mathcal{Y} _{} \to \mathcal{X} _{ } $ is a
Cartesian fibration,  it is in particular an
inner fibration.    
\end{proof}
\bibliographystyle{siam}     
\bibliography{link.bib}

\def\cprime{$'$}
\begin{thebibliography}{10}

\bibitem{cite_AtiyahBottTheYang-MillsequationsoverRiemannsurfaces}
{\sc M.~F. Atiyah and R.~Bott}, {\em {The Yang-Mills equations over Riemann
  surfaces.}}, Philos. Trans. R. Soc. Lond., A, 308 (1983), pp.~523--615.

\bibitem{cite_ReeseHarverSingular}
{\sc R.~{Harvey} and H.~B. jun. {Lawson}}, {\em {A theory of characteristic
  currents associated with a singular connection}}, {Bull. Am. Math. Soc., New
  Ser.}, 31 (1994), pp.~54--63.

\bibitem{cite_LurieHighertopostheory}
{\sc J.~Lurie}, {\em {Higher topos theory}}, {Annals of Mathematics Studies
  170. Princeton, NJ: Princeton University Press. }, 2009.

\bibitem{cite_FilteredHomotopyTypeFaria}
{\sc J.~F. Martins}, {\em On the homotopy type and the fundamental crossed
  complex of the skeletal filtration of a {CW}-complex}, Homology Homotopy
  Appl., 9 (2007), pp.~295--329.

\bibitem{cite_YangMillsBlackHoles}
{\sc F.~Naeimipour, B.~Mirza, and F.~Jahromi}, {\em Yang–mills black holes in
  quasitopological gravity}, The European Physical Journal C, 81 (2021).

\bibitem{cite_RiehlAmodelstructureforquasi-categories}
{\sc E.~Riehl}, {\em A model structure for quasi-categories},
  \url{https://emilyriehl.github.io/files/topic.pdf}.

\bibitem{cite_RognesK4Zistrivial}
{\sc J.~Rognes}, {\em {{\(K_4(\mathbb{Z})\)}} is the trivial group}, Topology,
  39 (2000), pp.~267--281.

\bibitem{cite_SavelyevAlgKtheory}
{\sc Y.~Savelyev}, {\em {Hamiltonian elements in algebraic $K$-theory}},
  \url{http://yashamon.github.io/web2/papers/KtheoryCycles.pdf}.

\bibitem{cite_SavelyevQuantumMaslov}
{\sc Y.~Savelyev}, {\em {Quantum Maslov classes}},
  \url{http://yashamon.github.io/web2/papers/quantumMaslov.pdf}.

\bibitem{cite_SavelyevGlobalFukayaCategoryI}
\leavevmode\vrule height 2pt depth -1.6pt width 23pt, {\em Global {Fukaya}
  category {I}}, Int. Math. Res. Not., 2023 (2023), pp.~18302--18386.

\bibitem{cite_SingularSibnerSibner}
{\sc L.~M. {Sibner} and R.~J. {Sibner}}, {\em {Classification of singular
  Sobolev connections by their holonomy}}, {Commun. Math. Phys.}, 144 (1992),
  pp.~337--350.

\bibitem{cite_ToenThehomotopytheoryofdg-categoriesandderivedMoritatheory}
{\sc B.~To\"en}, {\em {The homotopy theory of dg-categories and derived Morita
  theory.}}, Invent. Math., 167 (2007), pp.~615--667.

\end{thebibliography}
\end{document}